\newtheorem{thm}{Theorem}[section]
\newtheorem{lem}[thm]{Lemma}
\newtheorem{cor}[thm]{Corollary}
\newtheorem{exa}[thm]{Example}
\newtheorem{rmk}[thm]{Remark}
\newtheorem{dfn}[thm]{Definition}
\begin{document}
\title{On $n$-semiprimary ideals and $n$-pseudo valuation domains}
\author{David F. Anderson and  Ayman Badawi}
\address{Department of Mathematics, The University of Tennessee,
Knoxville, TN 37996-1320, U. S. A.}\email{danders5@utk.edu}
 \address{Department of Mathematics  $\&$ Statistics, The American University of
Sharjah, P.O.  Box 26666, Sharjah, United Arab Emirates}\email{abadawi@aus.edu}

\date{\today}

\keywords{prime ideal, 2-absorbing ideal, n-absorbing ideal, primary ideal, semiprimary ideal, radical ideal, powerful ideal, valuation domain, almost valuation domain, pseudo-valuation domain, almost pseudo-valuation domain, pseudo-almost valuation domain.}
\subjclass[2000]{13A15}

\begin{abstract}
Let $R$ be a commutative ring with $1\not = 0$ and $n$  a positive integer. A proper ideal $I$  of $R$ is an {\it $n$-semiprimary ideal} of $R$ if whenever $x^ny^n \in I$ for $x, y \in R$, then $x^n \in I$ or $y^n \in I$.  Let $R$ be an integral domain with quotient field $K$. A proper ideal $I$ of $R$  is an {\it $n$-powerful ideal} of $R$ if whenever $x^ny^n \in I$ for $x, y \in K$, then $x^n \in R$ or $y^n \in R$; and $I$ is an {\it $n$-powerful semiprimary ideal} of $R$ if whenever $x^ny^n \in I$ for  $x, y \in K$, then $x^n \in I$ or $y^n \in I$. If every prime ideal of $R$ is an $n$-powerful semiprimary ideal of $R$, then $R$ is an {\it  $n$-pseudo-valuation domain {\rm (}$n$-PVD{\rm )}}. In this paper, we study the above concepts and relate them to several generalizations of pseudo-valuation domains.
\end{abstract}

\maketitle{}

\section{introduction}

Let $R$ be a commutative ring with $1 \neq 0$ and $n$ a positive integer. Recall that an ideal $I$ of $R$ is a {\it semiprimary ideal} of $R$ if $\sqrt{I}$ is a prime ideal of $R$. In this paper, we introduce and study  $n$-semiprimary ideals (resp., $n$-powerful semiprimary ideals in integral domains), where a proper ideal $I$ of $R$ is {\it $n$-semiprimary} (resp.,
 {\it $n$-powerful semiprimary}) if whenever $x^ny^n \in I$ for $x, y \in R$ (resp., $x,y \in K$, the quotient field of $R$), then $x^n \in I$ or $y^n \in I$. These concepts generalize prime ideals and are generalized by semiprimary ideals. We also investigate several other ``$n$'' generalizations obtained by replacing $x$ with $x^n$ in the definition.

In Section $2$, we give some basic properties of $n$-semiprimary ideals. For example, we show that an $n$-semiprimary ideal is semiprimary, and the converse holds when $R$ is Noetherian. We also show that an $n$-semiprimary ideal is $m$-semiprimary for every integer $m \geq n$. In Section $3$, we characterize $n$-semiprimary ideals in several classes of commutative rings. In particular, we investigate $n$-semiprimary ideals in zero-dimensional commutative rings, Dedekind domains, valuation domains, and idealizations. In  Section $4$, we study $n$-powerful semiprimary ideals in integral domains and introduce $n$-pseudo-valuation domains ($n$-PVDs), a generalization of pseudo-valuation domains (PVDs). We also study $n$-valuation domains ($n$-VDs).  In the final section, Section $5$, we introduce pseudo $n$-valuation domains (P$n$VDs), another generalization of PVDs. Many examples are given throughout the paper to illustrate the theory. 

Throughout, $R$ will be a commutative ring with $1 \neq 0$, $\sqrt{I} = \{ x \in R \mid x^n \in I$ for some $n \in \mathbb{N} \}$ for $I$ an ideal of $R$, ideal of nilpotent elements $nil(R) = \sqrt{\{0\}}$, group of units $U(R)$, (Krull) dimension $dim(R)$, and characteristic $char(R)$. An overring of an integral domain $R$ with quotient field $K$ is a subring of $K$ containing $R$, and we denote the integral closure of $R$ (in $K$) by $\overline{R}$. In particular, if $I$ is an ideal of  $R$,  then $(I:I) = \{ x \in K \mid xI \subseteq I \}$ is an overring of $R$. Other definitions will be given throughout the paper as needed. As usual, $\mathbb{N}$, $\mathbb{Z}$, $\mathbb{Z}_n$, $\mathbb{F}_{p^n}$, $\mathbb{Q}$, $\mathbb{R}$, and $\mathbb{C}$ will denote the set of positive integers, the rings of integers and  integers mod $n$, the finite field with $p^n$ elements, and the fields of rational numbers, real numbers, and complex numbers, repectively. For any undefined terminology, see \cite{G}, \cite{H}, \cite{K}, or \cite {LM}.

\section{Basic properties of $n$-semiprimary ideals} \label{sec2}

In this section, we give some basic properties of $n$-semiprimary ideals. We begin with the definition.

\begin{dfn}
{\rm Let $I$ be a proper ideal of a commutative ring $R$ and $n$ a positive integer. Then $I$ is an {\it $n$-semiprimary ideal} of $R$ if whenever $x^ny^n \in I$ for $x, y \in R$, then $x^n \in I$ or $y^n \in I$.}
\end{dfn}

Note that a $1$-semiprimary ideal is just a prime ideal.  For convenience, call a commutative ring $R$ an {\it $n$-ring} if $x^ny^n = 0$ for $x, y \in R$ implies $x^n = 0$ or $y^n = 0$. Then a $1$-ring is just an integral domain, $R$ is an $n$-ring if and only if $\{0\}$ is an $n$-semiprimary ideal of $R$, and $R/I$ is an $n$-ring  if and only $I$ is an $n$-semiprimary ideal of $R$. We start with some elementary results that follow directly from the definitions. 

\begin{thm} \label{T0}
Let $I$ be a proper ideal of a commutative ring $R$. 

{\rm (a)}  Let $I$ be an $n$-semiprimary ideal of $R$. Then $I$ is an $mn$-semiprimary ideal of $R$ for every positive integer $m$. {\rm (}See Theorem~\ref{T3.5} for a stronger result.{\rm )}

{\rm (b)} Let $J \subseteq I$ be proper ideals of $R$. Then $I$ is an $n$-semiprimary ideal of $R$ if and only if $I/J$ is an $n$-semiprimary ideal of $R/J$. 

{\rm (c)} Let $I$ be an $n$-semiprimary ideal of $R$ and $S$ a multiplicatively closed subset of $R$ with $I \cap S = \emptyset$. Then $I_S$ is an $n$-semiprimary ideal of $R_S$.
\end{thm}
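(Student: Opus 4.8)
The plan is to verify each part directly from the definitions, invoking where convenient the reformulation (noted just before the statement) that $I$ is $n$-semiprimary exactly when $R/I$ is an $n$-ring.

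For (a), suppose $x^{mn}y^{mn}\in I$ for $x,y\in R$. Rewrite this as $(x^m)^n(y^m)^n\in I$ and apply the $n$-semiprimary hypothesis to the elements $x^m,y^m\in R$; this yields $(x^m)^n=x^{mn}\in I$ or $(y^m)^n=y^{mn}\in I$, which is exactly the required conclusion (and $I$ is still proper). For (b), first note that $I$ is proper in $R$ if and only if $I/J$ is proper in $R/J$, since $J$ is itself proper. Then, for $x,y\in R$ with images $\overline{x},\overline{y}$ in $R/J$, the containment $J\subseteq I$ gives $\overline{x}^{\,n}\overline{y}^{\,n}\in I/J$ if and only if $x^ny^n\in I$, and likewise $\overline{x}^{\,n}\in I/J$ if and only if $x^n\in I$; since every element of $R/J$ arises as such an image, the two $n$-semiprimary conditions are equivalent. (Alternatively, this is immediate from $(R/J)/(I/J)\cong R/I$ together with the ``$n$-ring'' reformulation.)

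For (c), observe first that $I_S$ is a proper ideal of $R_S$ precisely because $I\cap S=\emptyset$. Now suppose $(x/s)^n(y/t)^n\in I_S$ for $x/s,\,y/t\in R_S$. Clearing denominators, there is some $u\in S$ with $u\,x^ny^n\in I$; multiplying by $u^{n-1}\in R$ gives $u^nx^ny^n=(ux)^ny^n\in I$. Applying the $n$-semiprimary hypothesis in $R$ to $ux,\,y\in R$ yields $(ux)^n\in I$ or $y^n\in I$. In the first case $(x/s)^n=(ux)^n/(us)^n\in I_S$, and in the second case $(y/t)^n=y^n/t^n\in I_S$; either way $(x/s)^n\in I_S$ or $(y/t)^n\in I_S$, as desired.

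I expect the only mildly delicate step to be the bookkeeping with the element $u\in S$ in part (c): one must multiply through by the appropriate power $u^{n-1}$ to recover an expression of the shape $a^nb^n$ lying in $I$, and then recognize that $(ux)/(us)=x/s$ in $R_S$ in order to transport the conclusion back to the original fractions. Everything else is a routine unwinding of the definitions, and no deeper structural input is needed.
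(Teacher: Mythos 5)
Your proof is correct, and it follows exactly the direct-from-the-definitions route the paper intends: the paper states Theorem~\ref{T0} without proof, describing it as elementary, and your verifications (including the denominator-clearing step with $u^{n-1}$ and the identification $(ux)^n/(us)^n = (x/s)^n$ in part (c)) are precisely the routine details being left to the reader.
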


 We next show that an $n$-semiprimary ideal is indeed semiprimary.

\begin{thm} \label{T1}
Let $I$ be an $n$-semiprimary ideal of a commutative ring $R$. Then $\sqrt{I}$ is a prime ideal of $R$ and $x^n \in I$ for every $x \in \sqrt{I}$. In particular, $I$ is a semiprimary ideal of $R$, and $x \in \sqrt{I}$ if and only if $x^n \in I$.
\end{thm}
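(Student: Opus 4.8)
The plan is to prove the three assertions in a convenient order: first that $x \in \sqrt{I}$ implies $x^n \in I$, then that $\sqrt{I}$ is prime, and finally read off the remaining statements as consequences.

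\textbf{Step 1: $x \in \sqrt{I} \Rightarrow x^n \in I$.} Suppose $x \in \sqrt{I}$, so $x^k \in I$ for some positive integer $k$. I want to force the exponent down to $n$. The key trick is to apply the defining condition with $y = x$: if $x^n x^n = x^{2n} \in I$, then $x^n \in I$. So it suffices to show that $x^{2n} \in I$ whenever some power of $x$ lies in $I$. More generally, I would argue by induction on the exponent: if $x^m \in I$ with $m > n$, choose $j$ minimal with $2^j n \ge m$; then $x^{2^j n} = (x^m)(x^{2^j n - m}) \in I$, and repeatedly applying the $n$-semiprimary condition (each step halving the exponent from $2^{i+1}n$ to $2^i n$ by writing $x^{2^{i+1}n} = x^{2^i n}\cdot x^{2^i n}$) brings us down to $x^n \in I$. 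This uses only part of the definition (the diagonal case $y = x$) and Theorem~\ref{T0}(a) is not even needed, though it could be invoked for a cleaner phrasing.

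\textbf{Step 2: $\sqrt{I}$ is prime.} Since $I$ is proper and $I \subseteq \sqrt{I}$, once Step 1 is in hand we have $x^n \in I$ for all $x \in \sqrt{I}$; in particular $1 \notin \sqrt{I}$ (else $1 = 1^n \in I$), so $\sqrt{I}$ is a proper ideal. Now suppose $xy \in \sqrt{I}$ for $x, y \in R$. Then $(xy)^n = x^n y^n \in I$ (using Step 1 applied to $xy \in \sqrt{I}$, or just the definition of the radical together with Step 1). Since $I$ is $n$-semiprimary, $x^n \in I$ or $y^n \in I$, and either way $x \in \sqrt{I}$ or $y \in \sqrt{I}$. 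Hence $\sqrt{I}$ is prime.

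\textbf{Step 3: Conclusions.} By definition, an ideal with prime radical is semiprimary, so $I$ is semiprimary. Finally, $x \in \sqrt{I}$ implies $x^n \in I$ by Step 1, and conversely $x^n \in I$ trivially implies $x \in \sqrt{I}$; this gives the stated equivalence $x \in \sqrt{I} \iff x^n \in I$.

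I expect the only real obstacle to be Step 1 — the bookkeeping to reduce an arbitrary exponent $k$ down to exactly $n$. The doubling argument handles it cleanly because the $n$-semiprimary condition lets us halve exponents of the form $2^i n$; the one subtlety is to first pad $x^k$ up to an exponent that is a power of $2$ times $n$ by multiplying by a suitable further power of $x$ (which stays in $I$ since $I$ is an ideal). Everything after that is formal.
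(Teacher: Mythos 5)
Your proof is correct; the only thing I would tighten is the phrasing of the padding step (for $k \le n$ you just write $x^n = x^k x^{n-k} \in I$, and for $k > n$ your choice of $j$ with $2^j n \ge k$ works as stated). The route differs from the paper's in two ways. First, the order is reversed: the paper proves primality of $\sqrt{I}$ directly from the definition, writing $(xy)^{kn} = (x^k)^n(y^k)^n \in I$ for $xy \in \sqrt{I}$, and only afterwards proves the exponent statement; you prove the exponent statement first and then get primality as an immediate corollary via $(xy)^n = x^n y^n$. Second, the descent mechanisms differ: the paper takes $m$ least with $x^{mn} \in I$ and applies the $n$-semiprimary condition once to the factorization $x^{mn} = x^n (x^{m-1})^n$, forcing $m = 1$, whereas you pad the exponent up to $2^j n$ and halve repeatedly using only the diagonal instance $y = x$ of the condition. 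The paper's least-exponent argument is shorter (one application of the hypothesis, no iteration), while yours has the mild structural merit of isolating the exponent-reduction lemma, using only the diagonal case of the definition, and making the primality of $\sqrt{I}$ and the properness check ($1 \notin \sqrt{I}$, which the paper leaves implicit) fall out formally.
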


\begin{proof}
Let $xy \in \sqrt{I}$ for $x, y \in R$. Then there is a positive integer $k$  such that $(x^k)^n(y^k)^n = (xy)^{kn} \in I$.  Thus $x^{kn} = (x^k)^n  \in I$ or $y^{kn} = (y^k)^n  \in I$ since $I$ is an $n$-semiprimary ideal of  $R$. Hence $x \in \sqrt{I}$ or $y \in \sqrt{I}$; so $\sqrt{I}$ is a prime ideal of $R$. Let $x \in \sqrt{I}$ and $m$ be the least positive integer such that $x^{mn} \in I$. Then $x^n(x^{m-1})^n = x^nx^{(m-1)n} = x^{mn}  \in I$, and thus $x^n \in I$ or $x^{(m-1)n} \in I$ since $I$ is an $n$-semiprimary ideal of  $R$. Hence $m = 1$; so $x^n \in I$. The ``in particular'' statement is clear.
\end{proof}

The following is an example of a semiprimary ideal of a commutative ring $R$ that is not an $n$-semiprimary ideal for any positive integer $n$. Note that $R$ is {\it not} Noetherian. In fact, Corollary~\ref{C0} shows that semiprimary ideals in a commutative Noetherian ring are $n$-semiprimary for all large $n$.

\begin{exa} \label{E0}
{\rm Let $R = \mathbb{Z}_2[\{  X_n \}_{n = 1}^{\infty}]$ and $I = (\{  X_n^n \}_{n = 1}^{\infty})$. Then $\sqrt{I} =   (\{  X_n \}_{n = 1}^{\infty})$ is a prime ideal of $R$; so $I$ is a semiprimary ideal of $R$. However, $I$ is not an $n$-semiprimary ideal of $R$ for any positive integer $n$ since $X_{2n}^nX_{2n}^n =X_{2n}^{2n} \in I$, but  $X_{2n}^n \not \in I$.}
\end{exa}

The next theorem gives a sufficient condition for a semiprimary ideal to be an $n$-semiprimary ideal. As a consequence,   $n$-absorbing semiprimary ideals are $n$-semiprimary and semiprimary ideals in commutative Noetherian rings are $n$-semiprimary for all large $n$.

\begin{thm} \label{T2}
Let $I$ be a proper ideal of a commutative ring $R$ such that $P = \sqrt{I}$ is a prime ideal of $R$ and $P^n \subseteq I$ for a positive integer $n$. Then $I$ is an $m$-semiprimary ideal of $R$ for every integer $m \geq n$. In particular, $Q^n$ is an $m$-semiprimary ideal of $R$ for every prime ideal $Q$ of $R$ and integer $m \geq n$.
\end{thm}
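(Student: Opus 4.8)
The plan is to reduce everything to the primality of $P$ together with the containment $P^m \subseteq P^n \subseteq I$, which is valid for any $m \ge n$. Fix an integer $m \ge n$ and suppose $x^m y^m \in I$ for some $x, y \in R$. First I would observe that $(xy)^m = x^m y^m \in I \subseteq \sqrt{I} = P$, so, since $P$ is a prime ideal, $xy \in P$ and hence $x \in P$ or $y \in P$. Without loss of generality assume $x \in P$. Then $x^n \in P^n \subseteq I$, and since $m \ge n$ we get $x^m = x^{m-n}x^n \in I$. Thus $I$ is an $m$-semiprimary ideal of $R$ for every integer $m \ge n$.

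For the ``in particular'' statement, I would simply apply the first part with $I = Q^n$: here $\sqrt{Q^n} = \sqrt{Q} = Q$ is a prime ideal because $Q$ is prime, and trivially $Q^n \subseteq Q^n$, so the hypotheses are met with the same exponent $n$, and $Q^n$ is an $m$-semiprimary ideal of $R$ for every integer $m \ge n$.

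There is essentially no serious obstacle here; the one point deserving a moment's care is that membership $x \in P$ already forces $x^n \in I$ (not merely $x^n \in P$), and this is exactly what the hypothesis $P^n \subseteq I$ provides, after which the exponent bookkeeping $x^m = x^{m-n}x^n$ closes the argument. It is worth remarking that the hypothesis $P^n \subseteq I$ cannot be dropped, as Example~\ref{E0} shows: there $\sqrt{I}$ is prime but no power of $\sqrt{I}$ lies in $I$, and $I$ fails to be $n$-semiprimary for every $n$.
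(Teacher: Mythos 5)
Your proof is correct and follows essentially the same route as the paper: use $I \subseteq P$ and the primality of $P$ to get $x \in P$ or $y \in P$, then the containment $P^n \subseteq I$ (whence $P^m \subseteq I$ for $m \ge n$) to conclude; your bookkeeping $x^m = x^{m-n}x^n$ is just a minor repackaging of the paper's observation that $P^m \subseteq P^n \subseteq I$.
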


\begin{proof}
Let $x^ny^n \in I \subseteq P$ for  $x, y \in R$. Then $x \in P$ or $y \in P$. Thus $x^n \in P^n \subseteq I$ or $y^n \in P^n \subseteq I$, and hence $I$ is an $n$-semiprimary ideal of  $R$.  Moreover, $P^m \subseteq P^n \subseteq I$ for every integer $m \geq n$; so $I$ is also  an $m$-semiprimary ideal of  $R$ for every integer $m \geq n$. The ``in particular'' statement is clear.
\end{proof}

\begin{cor} \label{C0}
Let $I$ be a semiprimary ideal of a commutative Noetherian ring $R$. Then there is a positive integer $n$ such that $I$ is an $m$-semiprimary ideal of $R$ for every integer $m \geq n$.
\end{cor}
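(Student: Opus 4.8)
The plan is to reduce to Theorem~\ref{T2} by showing that some power of $P = \sqrt{I}$ lies inside $I$; this is exactly the hypothesis of Theorem~\ref{T2}, and the conclusion ``$I$ is an $m$-semiprimary ideal for every $m \geq n$'' is then immediate.

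First I would note that since $I$ is semiprimary, $P = \sqrt{I}$ is by definition a prime ideal of $R$. Because $R$ is Noetherian, $P$ is finitely generated, say $P = (a_1, \dots, a_k)$. Each $a_i \in P = \sqrt{I}$, so there is a positive integer $n_i$ with $a_i^{n_i} \in I$. Set $n = n_1 + \cdots + n_k$ (any integer at least this large works). Then $P^n$ is generated as an ideal by the monomials $a_1^{e_1} \cdots a_k^{e_k}$ with $e_1 + \cdots + e_k = n$; by a pigeonhole argument, for each such monomial we must have $e_j \geq n_j$ for some $j$, whence $a_1^{e_1} \cdots a_k^{e_k} \in (a_j^{n_j}) \subseteq I$. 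Therefore $P^n \subseteq I$.

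Having established that $P = \sqrt{I}$ is prime and $P^n \subseteq I$, Theorem~\ref{T2} applies directly and yields that $I$ is an $m$-semiprimary ideal of $R$ for every integer $m \geq n$, which is the assertion.

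I do not expect any real obstacle here: the only ingredient beyond Theorem~\ref{T2} is the standard fact that in a Noetherian ring a power of the radical of an ideal is contained in the ideal, and the short pigeonhole computation above supplies a self-contained proof of that fact in the special case needed. If one prefers, one could instead cite this fact from a standard reference (e.g.\ \cite{K}) and skip the monomial argument entirely.
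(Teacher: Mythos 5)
Your proposal is correct and follows essentially the same route as the paper: identify $P=\sqrt{I}$ as a prime, use finite generation (Noetherian) to get $P^n\subseteq I$, and invoke Theorem~\ref{T2}. The only difference is that you spell out the standard pigeonhole argument for $P^n\subseteq I$, which the paper simply asserts; your computation is correct.
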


\begin{proof}
Since $I$ ​is a semiprimary ideal of $R$, $P = \sqrt{I}$ is a prime ideal of $R$, and  $P^n \subseteq I$ 
for some positive integer $n$ since $P$ is finitely generated. Thus $I$ is an $m$-semiprimary ideal of $R$ for every integer  $m \geq n$ by Theorem \ref{T2}.
\end{proof}

Recall \cite{AB1} that a proper ideal $I$ of a commutative ring $R$ is an {\it $n$-absorbing ideal} of $R$ if whenever $x_1 \cdots x_{n+1} \in I$ for $x_1, \ldots, x_{n+1} \in R$, then the product of $n$ of the $x_i$'$s$ is in $I$. Both $n$-semiprimary and $n$-absorbing ideals generalize prime ideals, but in rather different ways. An $n$-semiprimary ideal need not be an $n$-absorbing ideal (see Example~\ref{E1}); and an $n$-absorbing ideal need not be $n$-semiprimary since, for example, $(6)$ is a $2$-absorbing ideal of $\mathbb{Z}$, but not a $2$-semiprimary ideal since $\sqrt{(6)} = (6)$ is not a prime ideal of $\mathbb{Z}$. However, we next show that if $\sqrt{I}$ is a prime ideal, then an $n$-absorbing ideal $I$ is $n$-semiprimary.

\begin{cor} \label{C1}
Let $I$ be an $n$-absorbing ideal of a commutative ring $R$. If $\sqrt{I}$ is a prime ideal of $R$, then $I$ is an $m$-semiprimary ideal of $R$ for every integer $m \geq n$. In particular, an $n$-absorbing ideal is $n$-semiprimary if and only if it is semiprimary.
\end{cor}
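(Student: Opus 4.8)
The plan is to verify the defining condition of an $m$-semiprimary ideal directly, using only that $I$ is $n$-absorbing together with the fact that $\sqrt{I}$ is prime. (One could instead try to invoke Theorem~\ref{T2}, but that requires knowing $(\sqrt{I})^n \subseteq I$, a containment that is somewhat delicate in general, so the direct route seems cleaner.) So fix an integer $m \geq n$ and suppose $x^m y^m \in I$ for $x, y \in R$. First I would write $x^m y^m$ as a product of the $2m$ elements consisting of $m$ copies of $x$ followed by $m$ copies of $y$; since $m \geq n \geq 1$ we have $2m \geq n+1$, so the $n$-absorbing property of $I$ yields a product of some $n$ of these $2m$ factors lying in $I$, and such a subproduct necessarily has the form $x^i y^{\,n-i}$ for some integer $i$ with $0 \leq i \leq n$ (there are enough copies of each of $x$ and $y$ since $n \leq m$).

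Next, since $x^i y^{\,n-i} \in I \subseteq \sqrt{I}$ and $\sqrt{I}$ is prime, I conclude $x \in \sqrt{I}$ or $y \in \sqrt{I}$; by symmetry assume $x \in \sqrt{I}$. At this point I would invoke the standard fact (see \cite{AB1}) that for an $n$-absorbing ideal $I$, every element of $\sqrt{I}$ has its $n$-th power in $I$; alternatively this can be reproved in a couple of lines by taking the least $k$ with $x^k \in I$, and, if $k \geq n+1$, writing $x^k$ as the product of the $n+1$ elements $x, \dots, x, x^{k-n}$ and applying the $n$-absorbing property to get $x^n \in I$ or $x^{k-1} \in I$, contradicting minimality of $k$. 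In either case $x^n \in I$, whence $x^m = x^{m-n}x^n \in I$ because $m \geq n$. This shows $I$ is $m$-semiprimary for every $m \geq n$.

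For the ``in particular'' statement: one direction is immediate from Theorem~\ref{T1}, since an $n$-semiprimary ideal is always semiprimary; conversely, if the $n$-absorbing ideal $I$ is semiprimary, i.e., $\sqrt{I}$ is prime, then the first part applied with $m = n$ shows $I$ is $n$-semiprimary.

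I do not expect a serious obstacle here; the step that needs care is the combinatorial bookkeeping when extracting $n$ of the $2m$ factors — one must check that \emph{every} possible such subproduct is of the form $x^i y^{\,n-i}$ with $0 \le i \le n$ and that $m \ge n$ makes these exponents realizable, so that ``$x \in \sqrt{I}$ or $y \in \sqrt{I}$'' is forced regardless of which subproduct the $n$-absorbing property happens to supply. The only judgement call is whether to cite \cite{AB1} for the implication ``$x \in \sqrt{I} \Rightarrow x^n \in I$'' or to include the short self-contained argument sketched above.
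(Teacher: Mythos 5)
Your proof is correct, but it is a genuinely different argument from the paper's. The paper simply quotes the (deep) theorem that an $n$-absorbing ideal $I$ satisfies $(\sqrt{I})^n \subseteq I$ (the Anderson--Badawi conjecture, proved in \cite{CW} and \cite{D}) and then applies Theorem~\ref{T2}; the ``in particular'' statement follows from Theorem~\ref{T1}. You instead verify the $m$-semiprimary condition directly, and the only input you need from the $n$-absorbing hypothesis is the elementary fact that $x \in \sqrt{I}$ implies $x^n \in I$ (which is in \cite{AB1} and which your two-line minimal-$k$ argument reproves, though you should phrase the conclusion as ``both alternatives contradict minimality, so $k \le n$, hence $x^n \in I$''). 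This makes your route more self-contained and avoids the heavy containment $(\sqrt{I})^n \subseteq I$ altogether, whereas the paper's route is shorter given the citation and plugs into the already-proved Theorem~\ref{T2}. Two small remarks on your write-up: (i) your opening combinatorial step is unnecessary --- since $x^m y^m = (xy)^m \in I \subseteq \sqrt{I}$ and $\sqrt{I}$ is prime (hence radical), you get $x \in \sqrt{I}$ or $y \in \sqrt{I}$ immediately, with no need to extract a subproduct of $n$ of the $2m$ factors; (ii) if you do keep that step, note that applying the $n$-absorbing property to a product of $2m > n+1$ factors uses the standard inductive extension of the definition (also recorded in \cite{AB1}), not the definition verbatim. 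Neither point is a gap, and the rest, including the ``in particular'' equivalence via Theorem~\ref{T1} and the case $m=n$, is fine.
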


\begin{proof}
Let $P = \sqrt{I}$ be a prime ideal of $R$.  Then $P^n = (\sqrt{I})^n \subseteq I$ since $I$ is an $n$-absorbing ideal of $R$ (\cite{CW},  \cite{D}). Thus $I$ is an $m$-semiprimary ideal of  $R$ for every integer $m \geq n$ by Theorem \ref{T2}. The ``in particular'' statement now  follows from Theorem~\ref{T1}. 
\end{proof}

\begin{cor} \label{C2}
Let $P_1 \subseteq \cdots  \subseteq P_k$ be prime ideals of a commutative ring $R$ and $n_1, \ldots, n_k$ positive integers. Then $I = P_1^{n_1} \cdots P_k^{n_k}$ is an $m$-semiprimary ideal of $R$ for every integer $m \geq n_1 + \cdots+  n_k$.
\end{cor}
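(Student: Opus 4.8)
\textbf{Proof proposal for Corollary~\ref{C2}.} The plan is to reduce everything to Theorem~\ref{T2}, whose hypothesis is that $\sqrt{I}$ is prime and that some power of $\sqrt{I}$ sits inside $I$. So I would set $N = n_1 + \cdots + n_k$ and aim to verify two things: (i) $\sqrt{I} = P_1$ (in particular, it is a prime ideal of $R$), and (ii) $P_1^{\,N} \subseteq I$. Granting these, Theorem~\ref{T2} applied with $P = P_1$ and exponent $N$ immediately yields that $I$ is an $m$-semiprimary ideal of $R$ for every integer $m \geq N$, which is exactly the claim.

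For (i), first note $\sqrt{P_1^{n_1} \cdots P_k^{n_k}} = \sqrt{P_1 \cdots P_k}$, since passing to radicals kills exponents. Then I would use that the radical of a finite product of ideals equals the radical of their intersection, so $\sqrt{P_1 \cdots P_k} = \sqrt{P_1 \cap \cdots \cap P_k}$. Because the $P_i$ form a chain $P_1 \subseteq \cdots \subseteq P_k$, the intersection is just $P_1$, which is already prime and hence radical; thus $\sqrt{I} = P_1$.

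For (ii), I would write $P_1^{\,N} = P_1^{\,n_1} P_1^{\,n_2} \cdots P_1^{\,n_k}$ and compare it factor-by-factor with $I = P_1^{\,n_1} P_2^{\,n_2} \cdots P_k^{\,n_k}$. Since $P_1 \subseteq P_i$ gives $P_1^{\,n_i} \subseteq P_i^{\,n_i}$ for each $i$, multiplying these containments over $i = 1, \ldots, k$ yields $P_1^{\,N} \subseteq I$, as desired.

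I do not anticipate any real obstacle here: the argument is a short bookkeeping exercise once one observes that the chain condition collapses both the radical (to the smallest prime $P_1$) and the comparison of powers (via $P_1 \subseteq P_i$). The only point that deserves an explicit word is the identity $\sqrt{\prod P_i} = \sqrt{\bigcap P_i}$, which is standard, and the rest is a direct appeal to Theorem~\ref{T2}.
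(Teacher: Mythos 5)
Your proposal is correct and follows essentially the same route as the paper's proof: both reduce to Theorem~\ref{T2} by observing that $\sqrt{I} = P_1$ and $P_1^{\,n_1+\cdots+n_k} \subseteq I$, which the chain condition makes immediate. You simply spell out the two containments that the paper asserts without comment.
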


\begin{proof}
Note that $\sqrt{I} = P_1$ is a prime ideal of $R$ and $P^n_1 \subseteq  P_1^{n_1} \cdots P_k^{n_k} = I$, where $n = n_1 + \cdots + n_k$. Thus $I$ is an $m$-semiprimary ideal of  $R$  for every integer $m \geq n$ by Theorem \ref{T2}.
\end{proof}

The converse of Theorem \ref{T2} need not be true, i.e., if $I$ is an $n$-semiprimary ideal of  $R$ for some integer $n \geq 2$, then $(\sqrt{I})^n$ need not be a subset of $I$. Let $p \geq 2$ be a prime integer. In the following example, we show that there is a proper ideal $I$ of a commutative ring $R$ such that $I$ is a $p$-semiprimary ideal of  $R$, but $(\sqrt{I})^p \nsubseteq I$, and thus $I$ is not a $p$-absorbing ideal of $R$ (\cite{CW}, \cite{D}).

\begin{exa} \label{E1}
{\rm Let $p \geq 2$ be a prime integer, $R = \mathbb{Z}_p[X, Y]$, and $I = (X^p, Y^p)$. Then $I$ is a proper ideal of $R$ with prime ideal $P = \sqrt{I} = (X, Y)$ and $P^p \nsubseteq I$ since $YX^{p-1} \notin I$. Thus $I$ is not a $p$-absorbing ideal of $R$ (\cite{CW}, \cite{D}). Let $f^pg^p \in I \subseteq (X, Y)$ for  $f, g \in R$. Then $f \in (X, Y)$ or $g \in (X, Y)$; so $f^p \in I$ or $g^p \in I$, and hence $I$ is a $p$-semiprimary  ideal of $R$.}
\end{exa}

Recall \cite{CH} that a proper ideal $I$ of a commutative ring $R$ is a {\it uniformly primary ideal} of $R$ if there is a positive integer $n$ such that whenever $xy \in I$ for $x, y \in R$, then $x \in I$ or $y^n \in I$. If $I$ is a uniformly primary ideal of $R$ for a positive integer $n$, then we say that $I$ is an {\it n-primary ideal} of $R$. By the following theorem, an $n$-primary ideal is also $n$-semiprimary.

\begin{thm} \label{T3}
Let $I$ be an $n$-primary ideal of a commutative ring $R$. Then $I$ is an $n$-semiprimary ideal of  $R$.
\end{thm}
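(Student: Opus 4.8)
The plan is to isolate one structural fact about $n$-primary ideals and then deduce the theorem from it almost formally. The fact is: if $I$ is an $n$-primary ideal of $R$, then $x^n \in I$ for \emph{every} $x \in \sqrt{I}$. This is exactly the analogue, for $n$-primary ideals, of the conclusion of Theorem~\ref{T1} for $n$-semiprimary ideals, and once it is available the $n$-semiprimary property drops out quickly. I would also first note that an $n$-primary ideal is in particular a primary ideal (this is immediate from the definition of uniformly primary: if $xy\in I$ and $x\notin I$, then $y^n\in I$, so $y\in\sqrt I$), and hence $P=\sqrt I$ is a prime ideal of $R$.

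The heart of the argument is the proof of the displayed fact, and I expect this short minimality argument to be the only real content. Fix $x \in P$; I want $x^n \in I$. If $x \in I$ there is nothing to do, so assume $x \notin I$. Since $x \in \sqrt I$, there is a least positive integer $k$ with $x^k \in I$, and $k \geq 2$ because $x \notin I$. Now write $x^k = x^{k-1}\cdot x \in I$ and apply the $n$-primary hypothesis to this product: either $x^{k-1} \in I$ or $x^n \in I$. The first alternative is impossible, since $1 \le k-1 < k$ contradicts the minimality of $k$; therefore $x^n \in I$, as claimed.

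It remains to verify that $I$ is $n$-semiprimary. Suppose $x^n y^n \in I$ for some $x, y \in R$. Then $(xy)^n = x^n y^n \in I \subseteq P$, and since $P = \sqrt I$ is prime we get $x \in P$ or $y \in P$. By the fact proved in the previous paragraph, this gives $x^n \in I$ or $y^n \in I$, which is precisely the $n$-semiprimary condition. (Alternatively one can argue without passing through primeness of $P$: if $x^n\notin I$, then primariness of $I$ applied to $x^n\cdot y^n\in I$ forces $y^n\in\sqrt I$, hence $y\in P$, hence $y^n\in I$.) No step here looks genuinely delicate; the whole proof hinges on recognizing that the uniform exponent $n$ in the $n$-primary condition forces $n$-th powers of radical elements into $I$ via the minimal-exponent trick.
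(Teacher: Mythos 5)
Your proof is correct, but it is organized differently from the paper's. The paper argues in one stroke: assuming $x^ny^n \in I$ and $x^n \notin I$, it takes the least $m$ with $x^ny^m \in I$, factors $x^ny^m = (x^ny^{m-1})y$, and applies the $n$-primary condition to this single product (the first factor is not in $I$ by minimality, so $y^n \in I$); it never mentions $\sqrt{I}$. You instead factor the argument through a lemma — for an $n$-primary ideal, $x^n \in I$ for every $x \in \sqrt{I}$ — proved by the same minimal-exponent trick applied to powers of a single element, and then combine it with the observation that $I$ is primary, so $P = \sqrt{I}$ is prime, to finish exactly as in the proof of Theorem~\ref{T2}. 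Both arguments are sound applications of the same basic device. Your route buys a slightly stronger structural statement (the analogue of Theorem~\ref{T1} for $n$-primary ideals, namely $\sqrt{I} = \{x \in R \mid x^n \in I\}$, obtained directly from the $n$-primary hypothesis), at the cost of the extra steps establishing primariness and primeness of the radical; the paper's proof is shorter and entirely self-contained, needing only one application of the defining condition.
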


\begin{proof}
Let $x^ny^n \in I$ for $x, y \in R$ with $x^n \notin I$, and let $m$ be the least positive integer such that $x^ny^m \in I$. Then $(x^ny^{m-1})y = x^ny^m \in I$. Since $x^ny^{m-1} \notin I$ and $I$ is an $n$-primary ideal of $R$, we have $y^n \in I$. Thus $I$ is an $n$-semiprimary  ideal of $R$.
\end{proof}

In the following example, we show that there is a commutative ring $R$ with ideals $\{ I_n \}_{n=2}^{\infty}$ such that every $I_n$ is an $n$-semiprimary ideal of  $R$ with $(\sqrt{I_n})^n \subseteq I_n$, but $I_n$ is not a primary ideal of $R$. In particular,  $I_n$ is not an $m$-primary ideal of $R$ for any positive integer $m$.

\begin{exa}\label{E2}
{\rm Let  $R = \mathbb{Z}_2[X, Y]$. For every integer $n \geq 2$, $I_n = (XY, Y^n)$ is an ideal of $R$ with prime ideal $P = \sqrt{I_n} = (Y)$. Thus $I_n$ is an $n$-semiprimary ideal of  $R$ by Theorem \ref{T2} since $P^n \subseteq I_n$. However, $YX \in I_n$, $Y\notin I_n$, and $X^m \not \in I_n$ for every positive integer $m$; so $I_n$ is not a primary ideal of $R$, and hence $I_n$ is not an $m$-primary ideal of $R$ for any positive integer $m$.}
\end{exa}

The next definition generalizes the ``$n$-semiprimary'' concept from elements to ideals.

\begin{dfn}
{\rm Let $I$ be a proper ideal of a commutative ring $R$ and $n$ a positive integer. Then $I$ is a {\it strongly $n$-semiprimary ideal} of $R$ if whenever $J^nK^n \subseteq I$ for proper ideals $J$ and $K$ of $R$, then $J^n \subseteq I$ or $K^n \subseteq I$.}
\end{dfn}

A strongly $1$-semiprimary ideal is just a prime ideal,  a strongly $n$-semiprimary ideal is an $n$-semiprimary ideal, and a strongly $n$-semiprimary ideal is also strongly $mn$-semiprimary for every positive integer $m$. However, the following example shows that an $n$-semiprimary ideal need not be strongly $n$-semiprimary.

\begin{exa}\label{E3}
{\rm Let $R = \mathbb{Z}_2[X, Y]$ and $I = (X^2, Y^2)$. By Example~\ref{E1}, $I$ is a $2$-semiprimary ideal of $R$ with prime ideal $P = \sqrt{I} = (X, Y)$. Clearly, $P^2P^2 = P^4 \subseteq I$, but $P^2 \nsubseteq I$. Thus $I$ is not a strongly $2$-semiprimary ideal of $R$. Note that $I$ is an $n$-semiprimary ideal of $R$ for every integer $n \geq 3$ by Theorem~\ref{T2} since $P^3 \subseteq I$, and hence $I$ is an $n$-semiprimary ideal of $R$ for every integer $n \geq 2$.}
\end{exa}

We have already observed in Theorem~\ref{T0} that an $n$-semiprimary ideal is also $mn$-semiprimary for every positive integer $m$. We next give a much stronger result.

\begin{thm} \label{T3.5}
Let $I$ be an $n$-semiprimary ideal of a commutative ring $R$.

{\rm (a)} If $x^my^k \in I$ for $x, y \in R$ and positive integers $m$ and $k$, then $x^n \in I$ or $y^n \in I$. In particular, if $x^m \in I$ for $x \in R$ and $m$ a postive integer, then $x^n \in I$.

{\rm (b)} $I$ is an $m$-semiprimary ideal of $R$ for every positive integer $m \geq n$.
\end{thm}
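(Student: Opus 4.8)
The plan is to reduce everything to Theorem~\ref{T1}, which already supplies the two facts we need: that $\sqrt{I}$ is a prime ideal of $R$, and that for $x \in R$ one has $x \in \sqrt{I}$ if and only if $x^n \in I$.

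For part (a), suppose $x^m y^k \in I$ with $m, k$ positive integers. I would set $\ell = \max\{m,k\}$ and use the fact that $I$ is an ideal: multiplying $x^m y^k$ by $x^{\ell-m}y^{\ell-k} \in R$ gives $(xy)^\ell = x^\ell y^\ell \in I \subseteq \sqrt{I}$, so $xy \in \sqrt{I}$. Since $\sqrt{I}$ is prime by Theorem~\ref{T1}, either $x \in \sqrt{I}$ or $y \in \sqrt{I}$, and then the last clause of Theorem~\ref{T1} yields $x^n \in I$ or $y^n \in I$. The ``in particular'' statement then follows by taking $y = x$ and $k = 1$: from $x^m \in I$ we get $x^{m+1} = x^m\cdot x \in I$, and the first part of (a) gives $x^n \in I$ (alternatively, $x^m \in I$ already forces $x \in \sqrt{I}$, so Theorem~\ref{T1} applies directly).

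For part (b), let $m \geq n$ and suppose $x^m y^m \in I$ for $x, y \in R$. Applying part (a) with both exponents equal to $m$ gives $x^n \in I$ or $y^n \in I$. If $x^n \in I$, then $x^m = x^{m-n}x^n \in I$ because $m \geq n$; symmetrically $y^n \in I$ implies $y^m \in I$. Hence $x^m \in I$ or $y^m \in I$, and since $I$ is proper by hypothesis, $I$ is an $m$-semiprimary ideal of $R$.

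The whole argument is essentially bookkeeping with exponents once Theorem~\ref{T1} is in hand; the only point requiring a small observation — the ``main obstacle,'' such as it is — is in (a), namely realizing that the mismatched exponents in $x^m y^k$ can be balanced into a common power $(xy)^\ell$ by absorbing the surplus factors $x^{\ell-m}y^{\ell-k}$ into $I$, so that primeness of $\sqrt{I}$ can be invoked. Everything after that is immediate.
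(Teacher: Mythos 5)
Your proof is correct and takes essentially the same route as the paper: balance the mismatched exponents to get $(xy)^{\ell}\in I$, conclude $xy\in\sqrt{I}$, and finish via Theorem~\ref{T1} (the paper deduces $x^ny^n=(xy)^n\in I$ and then invokes the $n$-semiprimary property, whereas you invoke primeness of $\sqrt{I}$ and then the equivalence $x\in\sqrt{I}\Leftrightarrow x^n\in I$ --- an immaterial difference). Part (b) coincides with the paper's argument.
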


\begin{proof}
(a)  Let $x^my^k \in I$ for $x, y \in R$; we may assume that $m \geq k$. Then $(xy)^m = x^my^m = (x^my^k)y^{m-k} \in I$ . Thus $xy \in\sqrt{ I}$; so $x^ny^n = (xy)^n \in I$ by Theorem~\ref{T1}. Hence $x^n \in I$ or $y^n \in I$ since $I$ is an $n$-semiprimary ideal of $R$. The ``in particular'' statement is clear.

(b) Let $x^my^m \in I$ for $x, y \in R$ with $m \geq n$. Then $x^n \in I$ or $y^n \in I$ by part (a). Thus  $x^m = x^{m-n}x^n \in I$ or $y^m = y^{m-n}y^n \in I$ since $m \geq n$; so $I$ is an $m$-semiprimary ideal of $R$.
\end{proof}

An ideal may be $n$-semiprimary for many different values of $n$. We now make that statement more precise. For a proper ideal $I$ of a commutative ring $R$, let $W_R(I) = \{ n \in \mathbb{N} \mid I$ is an $n$-semiprimary ideal of $R \}$ and $\delta_R(I) = min W_R(I)$ (let $\delta_R(I) = \infty$ if  $W_R(I) =\emptyset$). Then  $W_R(I) =  [\delta_R(I), \infty) \cap \mathbb{N}$ by Theorem~\ref{T3.5}(b).

\section{$n$-semiprimary ideals in some classes of rings}

In this section, we study $n$-semiprimary ideals in several important classes of commutative rings. We have already observed in Corollary~\ref{C0} that for commutative Noetherian rings, a semiprimary ideal is $n$-semiprimary for all large  $n$. ‭The first two results concern the case when $dim(R) = 0$.

\begin{thm}\label{S3T2}
Let $I \supseteq nil(R)$ be an ideal of a commutative ring $R$ with $dim(R) = 0$. Then $I$ is an $n$-semiprimary ideal of $R$ if and only if $I$ is a prime ideal of $R$ {\rm (}i.e., $I$ is a $1$-semiprimary ideal of $R${\rm )}.
\end{thm}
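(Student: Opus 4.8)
The plan is to prove both implications, the nontrivial one being that an $n$-semiprimary ideal $I$ containing $nil(R)$ is prime when $\dim(R) = 0$. The reverse implication is trivial: a prime ideal is $1$-semiprimary, hence $n$-semiprimary for every $n$ by Theorem~\ref{T3.5}(b) (or directly). For the forward direction, suppose $I$ is $n$-semiprimary. By Theorem~\ref{T1}, $P = \sqrt{I}$ is a prime ideal of $R$ and $x \in P$ if and only if $x^n \in I$. Since $I \supseteq nil(R) = \sqrt{\{0\}}$ and $\dim(R) = 0$, every prime ideal of $R$ is both minimal and maximal; in particular $P$ is a maximal ideal and $R/nil(R)$ has Krull dimension $0$ with $P/nil(R)$ minimal, so $P_P/nil(R)_P$ is the unique prime of the local ring $(R/nil(R))_P$, forcing $P/nil(R)$ to be (locally, hence globally, being maximal) nilpotent modulo... — more carefully, I would localize at $P$ to reduce to the local case.

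The key step is to show $P \subseteq I$, which combined with $I \subseteq P = \sqrt{I}$ gives $I = P$ prime. Localize: by Theorem~\ref{T0}(c), $I_P$ is $n$-semiprimary in $R_P$, and $R_P$ is a zero-dimensional local ring with maximal ideal $P_P = \sqrt{I_P}$, and $I_P \supseteq nil(R)_P = nil(R_P)$. Since $R_P$ is zero-dimensional and local, its maximal ideal $P_P$ equals $nil(R_P)$ (every element of the maximal ideal of a zero-dimensional local ring is nilpotent). Hence $P_P = nil(R_P) \subseteq I_P$, so $I_P = P_P$. Now I would argue that $I_P = P_P$ together with $I \subseteq P$ forces $I = P$: for $x \in P$, we have $x/1 \in P_P = I_P$, so there is $s \in R \setminus P$ with $sx \in I$; since $P$ is maximal and $s \notin P$, write $1 = as + p$ with $p \in P$, but a cleaner route is: $R$ zero-dimensional means every prime other than $P$ is incomparable to $P$, and one can show directly that an ideal $J$ with $\sqrt{J} = P$ a maximal ideal and $J$ saturated at $P$ must equal $P$ when $P$ itself is nilpotent mod lower primes — actually the simplest finish is: since $I \supseteq nil(R)$ and, working in $\overline{R} = R/nil(R)$ which is still zero-dimensional, $\overline{P} = \sqrt{\overline{I}}$ is a minimal prime of $\overline{R}$, hence (von Neumann regular after modding further, or: a minimal prime in a reduced zero-dimensional ring is generated by an idempotent) $\overline{P}$ is generated by an idempotent $e$, so $\overline{P} = \overline{P}^2 \subseteq \overline{I}$ by Theorem~\ref{T1} applied with... — I would pick whichever of these makes $\overline{P} \subseteq \overline{I}$ cleanest.

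Let me commit to the cleanest argument: pass to $\overline{R} = R/nil(R)$ and $\overline{I} = I/nil(R)$, which by Theorem~\ref{T0}(b) is $n$-semiprimary in $\overline{R}$, with $\overline{R}$ reduced and zero-dimensional, hence von Neumann regular. In a von Neumann regular ring every prime ideal is maximal and generated by... more to the point, every ideal is radical, so $\overline{I} = \sqrt{\overline{I}} = \overline{P}$ is already prime, and therefore $I = P$ is prime in $R$. That is essentially immediate once one knows a reduced zero-dimensional ring is von Neumann regular, in which all ideals are radical. So the skeleton is: (1) reduce mod $nil(R)$ using Theorem~\ref{T0}(b); (2) note the quotient is reduced and zero-dimensional, hence von Neumann regular, so all ideals are radical; (3) conclude $I/nil(R) = \sqrt{I/nil(R)}$ is prime by Theorem~\ref{T1}, whence $I$ is prime.

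The main obstacle is step (2): justifying that a reduced zero-dimensional commutative ring has the property that every ideal equals its radical. This is a standard fact (such rings are von Neumann regular, and in a von Neumann regular ring $I = \{x : x = ax^2 \text{ for some } a\}$-type arguments show ideals are radical), but the paper has not stated it, so I would either cite \cite{G} or \cite{K} for it or give the one-line argument: if $x^2 \in I$ in a von Neumann regular ring, pick $a$ with $xax = x$; then $x = ax^2 \cdot (\text{unit adjustment})$... concretely $x = xax = x(axa)x = \dots$, ultimately $x = a x \cdot x \in (x^2) \subseteq I$. The rest is routine bookkeeping with the earlier results, so the write-up should be short.
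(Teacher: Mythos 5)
Your committed argument is correct, and it is genuinely different from the paper's. The paper works directly in $R$: it invokes the decomposition $x = eu + w$ (idempotent $e$, unit $u$, nilpotent $w$) available in zero-dimensional rings by \cite[Corollary 1]{B}, expands $(eu+w)^n = ev + w^n$ with $v$ a unit, and uses $nil(R) \subseteq I$ to peel off $w^n$ and conclude $x \in I$ from $x^n \in I$. You instead pass to $\overline{R} = R/nil(R)$ via Theorem~\ref{T0}(b) (legitimate precisely because $nil(R) \subseteq I$), observe that $\overline{R}$ is reduced and zero-dimensional, hence von Neumann regular, note that every ideal of a von Neumann regular ring is radical (from $x = ax^2$ one gets $x = a^{k-1}x^k$, so state the iteration for general $k$, not just $k=2$), and then Theorem~\ref{T1} gives $\overline{I} = \sqrt{\overline{I}}$ prime, whence $I$ is prime. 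In effect you reverse the paper's logical order: the paper deduces the von Neumann regular case (its Corollary~\ref{S3T1}) from this theorem, while you prove the von Neumann regular case first and obtain the theorem by reduction modulo the nilradical; your route avoids the structure theorem from \cite{B} at the cost of the (standard, and already cited in the paper from \cite[page 5]{H}) facts about reduced zero-dimensional rings. Two small housekeeping points: your abandoned localization sketch does have a gap (getting from $I_P = P_P$ back to $I = P$ needs a saturation argument you never supplied), so in a final write-up delete it entirely; and the converse direction is, as you say, immediate. Otherwise the skeleton you committed to compiles into a complete proof.
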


\begin{proof}
A prime ideal is certainly $n$-semiprimary for every positive integer $n$. Conversely, we show that an $n$-semiprimary ideal $I$ of $R$ is a prime ideal of $R$.  Let $xy \in I$ for  $x, y \in R$; so $x^ny^n \in I$. Then $x^n \in I$ or $y^n \in I$; say $x^n \in I$. Since $dim(R) = 0$, we have $x = eu + w$ for an idempotent $e \in R$, $u \in U(R)$, and $w \in nil(R)$ \cite[Corollary 1]{B}. Thus $x^n = (eu + w)^n = eu^n + a_1eu^{n-1}w + a_2eu^{n-2}w^2 + \cdots + a_{n-1}euw^{n-1} + w^n = e(u^n + a_1u^{n-1}w + a_2u^{n-2}w^2 + \cdots + a_{n-1}uw^{n-1}) + w^n \in I$, where the $a_i$'s are  positive integers, and $v = u^n + a_1u^{n-1}w + a_2u^{n-2}w^2 + \cdots + a_{n-1}uw^{n-1}  \in U(R)$. Hence $x^n = (eu + w)^n = ev + w^n$ with $w ^n \in nil(R) \subseteq I$. Thus $ev = x^n - w^n  \in I$, and hence $eu = (ev)(v^{-1}u) \in I$. Thus $x = eu + w \in I$; so $I$ is a prime ideal of $R$. 
\end{proof}

\begin{cor}\label{S3T1}
Let $R$ be a commutative  von-Neumann regular ring. Then a proper ideal $I$ of $R$ is an $n$-semiprimary ideal of  $R$ if and only if $I$ is a prime ideal of $R$.
\end{cor}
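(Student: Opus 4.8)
The plan is to derive this immediately from Theorem~\ref{S3T2} by recalling the two defining structural features of a commutative von Neumann regular ring: it is \emph{reduced} and it is \emph{zero-dimensional}. I would first recall that in a von Neumann regular ring $R$, for each $x \in R$ there is $y \in R$ with $x = x^2 y$; setting $e = xy$ gives an idempotent with $xR = eR$, and from $x(1-xy) = 0$ together with reducedness one sees $nil(R) = \{0\}$. Likewise every prime ideal of $R$ is maximal, so $dim(R) = 0$; alternatively one can cite that $R$ is zero-dimensional and reduced directly.

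With these two facts in hand, the argument is a one-line invocation: since $nil(R) = \{0\}$, every proper ideal $I$ of $R$ satisfies $I \supseteq nil(R)$, and since $dim(R) = 0$, Theorem~\ref{S3T2} applies verbatim and tells us that $I$ is an $n$-semiprimary ideal of $R$ if and only if $I$ is a prime ideal of $R$. The forward direction (prime $\Rightarrow$ $n$-semiprimary for all $n$) is of course trivial and already noted in Theorem~\ref{S3T2}.

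There is essentially no obstacle here; the only thing to be careful about is making sure the hypotheses of Theorem~\ref{S3T2} are literally met, namely that $nil(R) \subseteq I$ (which is automatic because $nil(R) = \{0\}$) and $dim(R) = 0$. If one prefers not to quote "zero-dimensional and reduced" as known folklore, the mild extra work is to prove $nil(R) = \{0\}$ and maximality of primes from the von Neumann regularity identity $x = x^2y$, which is standard and short. So the proof proposal is: observe $R$ is reduced and zero-dimensional; conclude $nil(R) = \{0\} \subseteq I$ and $dim(R) = 0$; apply Theorem~\ref{S3T2}.
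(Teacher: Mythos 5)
Your proposal is correct and is essentially the paper's own argument: the paper likewise cites that a commutative ring is von Neumann regular if and only if $nil(R) = \{0\}$ and $dim(R) = 0$, and then applies Theorem~\ref{S3T2}. Your optional sketch of deriving reducedness and zero-dimensionality from the identity $x = x^2y$ is fine but unnecessary, since the characterization can simply be cited.
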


\begin{proof}
A commutative ring $R$ is von Neumann regular if and only if $nil(R) = \{0\}$ and $dim(R) = 0$ \cite[page 5]{H}. 
\end{proof}

However, if $I$ is an $n$-semiprimary ideal of  a zero-dimensional commutative ring $R$ for some integer $n\geq 2$ and $nil(R) \nsubseteq I$, then $I$ need not be a prime ideal of $R$. We have the following example.

\begin{exa}
{\rm Let $R = \mathbb{Z}_4\times \mathbb{Z}_2$. Then $dim(R) = 0$ and $I = \{0\}\times \mathbb{Z}_2$ is a $2$-semiprimary ideal of  $R$ with $nil(R) = \{0, 2 \} \times \{0\} \nsubseteq I$. However, $I$ is not a prime ideal of $R$.}
\end{exa}

It is easy to determine the $n$-semiprimary ideals in a Dedekind domain $R$ since every nonzero proper ideal of $R$ is (uniquely) a product of prime (maximal) ideals \cite[Theorem 6.16]{LM}. 

\begin{thm}\label{S3T3.5}
Let $I$ be a nonzero proper ideal of a Dedekind domain $R$. Then $I$ is an $n$-semiprimary ideal of $R$ if and only if $I = P^k$, where $P = \sqrt{I}$ is a prime (maximal) ideal of $R$ and $n \geq k$. Moreover, $\delta_R(I) = n$ if and only if $I = P^n$.
\end{thm}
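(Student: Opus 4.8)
The plan is to use the structure theory of ideals in a Dedekind domain together with the machinery already developed. First I would recall that every nonzero proper ideal $I$ of a Dedekind domain $R$ factors uniquely as $I = P_1^{k_1} \cdots P_r^{k_r}$ with distinct maximal ideals $P_i$ and positive integers $k_i$, and that $\sqrt{I} = P_1 \cap \cdots \cap P_r = P_1 \cdots P_r$. Since an $n$-semiprimary ideal is semiprimary (Theorem~\ref{T1}), $\sqrt{I}$ must be prime, which forces $r = 1$; so $I = P^k$ with $P = \sqrt{I}$ maximal. This is the easy direction of the structural part.

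For the converse, suppose $I = P^k$ with $P$ maximal and $n \geq k$. Then $\sqrt{I} = P$ is prime and $P^n \subseteq P^k = I$ since $n \geq k$, so Theorem~\ref{T2} immediately gives that $I$ is $m$-semiprimary for every $m \geq n$, in particular for $m = n$. This also handles one inequality in the ``moreover'' statement: if $I = P^n$ then $n \in W_R(I)$, so $\delta_R(I) \leq n$. The remaining task, and the one requiring the most care, is to show that if $I = P^k$ is $n$-semiprimary then $n \geq k$, and that $\delta_R(I) \geq n$ exactly when $I = P^n$ — equivalently, that $P^k$ is \emph{not} $(k-1)$-semiprimary (so $\delta_R(P^k) = k$).

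For this last point I would argue directly. Fix the maximal ideal $P$ and pick $\pi \in P \setminus P^2$ (possible since $P \neq P^2$ in a Dedekind domain). Localizing at $P$, the element $\pi$ generates $PR_P$ and $\pi^j R_P = P^j R_P$ with $\pi^j \notin P^{j+1} R_P$; pulling back, $\pi^j \in P^k$ iff $j \geq k$. Now suppose $I = P^k$ is $n$-semiprimary with $n < k$, say $n = k - t$ for some $t \geq 1$. Consider $x = \pi$ and a suitable power: $x^n \cdot x^n = \pi^{2n}$. If $2n \geq k$, i.e. $2(k-t) \geq k$, i.e. $k \geq 2t$, then $\pi^{2n} \in P^k = I$, yet $\pi^n = \pi^{k-t} \notin P^k$ since $k - t < k$; this contradicts $n$-semiprimality. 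For the case $k < 2t$ (so $n$ is small relative to $k$) I would instead use Theorem~\ref{T3.5}(a): $\pi^{2n} \in I$ for $2n \geq k$ still suffices, and one always has $2n \geq k$ unless $n < k/2$; in the latter range one notes $\pi^k \in I$ with $k$ a single exponent, so by the ``in particular'' clause of Theorem~\ref{T3.5}(a), $\pi^n \in I$, again contradicting $\pi^n = \pi^{k-t} \notin P^k$. Thus $n \geq k$ in all cases, proving $W_R(P^k) = [k,\infty) \cap \mathbb{N}$ and hence $\delta_R(I) = k$, which says $\delta_R(I) = n$ iff $k = n$ iff $I = P^n$. The main obstacle is simply bookkeeping the exponents cleanly; the conceptual content is entirely supplied by Theorems~\ref{T1}, \ref{T2}, and \ref{T3.5} together with unique factorization of ideals.
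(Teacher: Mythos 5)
Your proposal is correct and follows essentially the same route as the paper: unique factorization of ideals plus Theorem~\ref{T1} to force $I = P^k$, and Theorem~\ref{T2} for sufficiency when $n \geq k$. Your uniformizer argument (with $\pi \in P \setminus P^2$ and the ``in particular'' clause of Theorem~\ref{T3.5}(a), which alone covers all cases, making the $2n \geq k$ case split unnecessary) simply makes explicit the necessity of $n \geq k$ that the paper leaves implicit in its appeal to Theorem~\ref{T1}.
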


\begin{proof}
Let $I$ be a nonzero proper ideal of a Dedekind domain $R$. Then $\sqrt{I} =P$ is a prime (maximal) ideal if and only if $I = P^k$ for some positive integer $k$. Thus by Theorem~\ref{T1} and Theorem~\ref{T2}, $I$ is $n$-semiprimary if and only if $I = P^k$ for some positive integer $k$, where  $n \geq k$. The ``in particular'' statement is clear.
\end{proof}

Next, we give a characterization of Dedekind domains in terms of $2$-semiprimary ideals.

\begin{thm} \label{S3T3}
Let $R$ be a Noetherian integral domain. Then the following statements are equivalent.
\begin{enumerate}
\item $R$ is a Dedekind domain.
\item If $I$ is an ideal of $R$ with $\delta_R(I) =2$, then $I = M^2$ for some maximal ideal $M$ of $R$.
\end{enumerate}
\end{thm}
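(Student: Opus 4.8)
The plan is to prove $(1) \Rightarrow (2)$ directly from the structure theory already in hand, and $(2) \Rightarrow (1)$ by showing that $R$ has Krull dimension one and is integrally closed (equivalently, that every localization at a maximal ideal is a DVR). For $(1) \Rightarrow (2)$: if $R$ is Dedekind and $\delta_R(I) = 2$, then in particular $I$ is $2$-semiprimary, hence semiprimary by Theorem~\ref{T1}, so $\sqrt{I} = P$ is a (nonzero) prime, hence maximal. By Theorem~\ref{S3T3.5}, $I = P^k$ with $\delta_R(I) = k$; since $\delta_R(I) = 2$ we get $k = 2$, so $I = M^2$ with $M = P$ maximal. (The case $I = 0$ is excluded since $\delta_R(0) = 1$ when $R$ is a domain, and $I = R$ is not proper.)

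For $(2) \Rightarrow (1)$, the first step is to bound the dimension. Suppose $R$ is not a field (if it is, it is trivially Dedekind). Take any nonzero prime $P$; I want to show $P$ is maximal. The idea is to produce an ideal $I$ with $\sqrt{I} = P$ and $P^2 \subsetneq I \subsetneq P$ — for instance, for a suitable $x \in P$, consider $I = P^2 + (x)$, or more carefully an ideal sitting strictly between $P^2$ and $P$ with radical exactly $P$. Such an $I$ satisfies $P^n \subseteq I$ for some $n$ (as $R$ is Noetherian and $\sqrt{I} = P$), so by Theorem~\ref{T2} it is $m$-semiprimary for all large $m$; combined with $I \subsetneq P = \sqrt{I}$, Theorem~\ref{T1} forces $\delta_R(I) \ge 2$, and one arranges $I \not\supseteq P^2$ to guarantee $\delta_R(I) \ne$ (anything making it a square of the maximal ideal). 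Then hypothesis $(2)$ would demand $I = M^2$ for a maximal ideal $M$, whence $M = \sqrt{I} = P$ is maximal and $I = P^2$, contradicting $P^2 \subsetneq I$. So every nonzero prime is maximal, i.e., $\dim R \le 1$.

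The second step is to show each localization $R_M$ at a maximal ideal is a DVR, equivalently that $M R_M$ is principal, equivalently (since $R_M$ is Noetherian local one-dimensional) that $R_M$ is integrally closed. Here the hypothesis must be used again at the local level: $n$-semiprimary and $\delta$ behave well under localization by Theorem~\ref{T0}(c), and condition $(2)$ transfers because maximal ideals of $R_M$ correspond to $M$ itself. The cleanest route is probably: in $R_M$, every ideal $I$ with $\sqrt{I} = MR_M$ and $I \subsetneq MR_M$, $I \not\supseteq (MR_M)^2$ would have $\delta = 2$ and hence be forced to equal $(MR_M)^2$ — but if $MR_M$ were not principal one can exhibit such an $I$ strictly between $(MR_M)^2$ and $MR_M$ (e.g. generated by part of a minimal generating set of $M$), a contradiction; so $MR_M/(MR_M)^2$ is one-dimensional over $R/M$, giving $MR_M$ principal by Nakayama. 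Then $R$ is a Noetherian domain, locally a DVR at every maximal ideal, hence a Dedekind domain.

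The main obstacle I anticipate is the careful construction, in step two (and its global shadow in step one), of an ideal $I$ strictly between $P^2$ (resp.\ $M^2$) and $P$ (resp.\ $M$) with radical exactly $P$ and with $\delta_R(I) = 2$ on the nose — one must simultaneously ensure $I \ne P^k$ for every $k$ (so that it is not already a power of the maximal ideal, which is the only permitted conclusion under $(2)$) while keeping $P^2 \subsetneq I \subsetneq P$, and this is exactly where the possibility that $M/M^2$ has dimension $\ge 2$ over $R/M$ is exploited. Checking that the candidate $I$ genuinely has $\delta_R(I) = 2$ rather than $\delta_R(I) = 1$ uses $I \subsetneq \sqrt{I}$ via Theorem~\ref{T1}, and that it is not pushed up to $\delta_R(I) \ge 3$ requires a short argument that $I$ is $2$-semiprimary, for which having $P^2 \supseteq$ something inside $I$ won't suffice in general — so one likely wants $I$ chosen with $P^{3} \subseteq I$ but $P^2 \not\subseteq I$, forcing $\delta_R(I) \in \{2,3\}$, and then eliminating $3$ by a direct verification of the $2$-semiprimary property of $I$ using that $R_M$-locally $I$ contains a high power of $M$ and $R/M$ is a field.
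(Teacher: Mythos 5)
Your $(1) \Rightarrow (2)$ half is fine and is essentially the paper's argument via Theorem~\ref{S3T3.5}. The problems are in $(2) \Rightarrow (1)$. First, your ``step 1'' does not prove what it claims: you suppose an ideal $I$ with $P^2 \subsetneq I \subsetneq P$ exists and then derive a contradiction with hypothesis (2); that reductio only shows no such $I$ exists, and you cannot extract the intermediate assertion ``$P$ is maximal'' from inside it. (If you want $\dim R \leq 1$, apply (2) to $I = P^2$ itself: for a nonzero prime $P$ of a Noetherian domain, $P^2 \subsetneq P$, $P^2$ is $2$-semiprimary by Theorem~\ref{T2} and is not prime, so $\delta_R(P^2) = 2$, whence $P^2 = M^2$ and $P = \sqrt{P^2} = \sqrt{M^2} = M$ is maximal.) Second, your anticipated ``main obstacle'' reflects a genuine confusion rather than a real difficulty: you ask simultaneously for $P^2 \subsetneq I$ and $I \not\supseteq P^2$, and later propose choosing $I$ with $P^3 \subseteq I$ but $P^2 \not\subseteq I$. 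With that choice Theorem~\ref{T2} only gives that $I$ is $3$-semiprimary, and nothing in the paper lets you verify $2$-semiprimariness directly, so that route stalls. The point you are missing is that \emph{every} ideal $I$ with $M^2 \subseteq I \subsetneq M$ automatically has $\delta_R(I) = 2$: it is $2$-semiprimary by Theorem~\ref{T2} (since $\sqrt{I} = M$ and $M^2 \subseteq I$), and it is not prime, since a prime ideal equals its radical $M$. There is nothing to ``arrange.''

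Once this is observed, the hypothesis applied to any such $I$ forces $I = M^2$, i.e., there are no ideals strictly between $M^2$ and $M$ for any maximal ideal $M$, and the paper concludes at once that $R$ is Dedekind by citing \cite[Theorem 6.20]{LM}; no dimension bound and no localization are needed. Your localization route --- transfer (2) to $R_M$ by contraction, take $I = (x) + (MR_M)^2$ for $x$ part of a minimal generating set when $MR_M$ is not principal, contradict, and conclude $MR_M$ is principal so that $R_M$ is a DVR --- can be made rigorous (and note that a Noetherian local domain with nonzero principal maximal ideal is already a DVR, so your step 1 would then be superfluous), but as written it amounts to re-proving the cited characterization while carrying the logical slips above.
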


\begin{proof}

{\bf $(1) \Rightarrow (2)$} This follows directly from Theorem~\ref{S3T3.5}.

{\bf $(2) \Rightarrow (1)$}  Let $I$ be an ideal of $R$ with $M^2 \subseteq I \subsetneq M$ for a maximal ideal $M$ of $R$. Then $I$ is $2$-semiprimary  by Theorem~\ref{T2} and not prime (maximal); so $\delta_R(I) = 2$. Thus $I = M^2$ by hypothesis. Hence there are no ideals of $R$ strictly between $M$ and $M^2$ for every maximal ideal $M$ of $R$; so $R$ is a Dedekind domain by \cite[Theorem 6.20]{LM}.
\end{proof}

It is also easy to describe the $n$-semiprimary ideals in a valuation domain. Recall that every proper ideal in a valuation domain is semiprimary \cite[Theorem 17.1(2)]{G}.

\begin{thm}  \label{S3T3.8}
Let $I$ be a proper ideal of a valuation domain $R$ with $P = \sqrt{I}$.

{\rm (a)}   $I$ is an $n$-semiprimary ideal of $R$ if and only if $P^n \subseteq I$.

{\rm (b)} If $P$ is idempotent, then $I$ is an $n$-semiprimary ideal of $R$ if and only if $I = P$.

{\rm (c)} If $P$ is not idempotent, then $I$ is an $n$-semiprimary ideal of $R$ for some positive integer $n$. Moreover, every ideal of $R$ between $P$ and the prime ideal directly below $P$ is an $n$-semiprimary ideal for some positive integer $n$.
\end{thm}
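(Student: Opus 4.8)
The plan is to prove each of the three parts using the structure theory of ideals in a valuation domain, together with Theorems~\ref{T1} and~\ref{T2}. Recall that in a valuation domain $R$, the set of ideals is totally ordered by inclusion, and for any proper ideal $I$ with $P = \sqrt{I}$, the ideal $P$ is prime and $I$ is semiprimary automatically by \cite[Theorem 17.1(2)]{G}.

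For part (a), the ``if'' direction is immediate from Theorem~\ref{T2}, since $P = \sqrt{I}$ is prime and $P^n \subseteq I$. For the ``only if'' direction, suppose $I$ is $n$-semiprimary. By Theorem~\ref{T1}, $x^n \in I$ for every $x \in \sqrt{I} = P$. Now I want to conclude $P^n \subseteq I$, i.e., that every product $x_1 \cdots x_n$ with all $x_i \in P$ lies in $I$. Because the ideals of $R$ are totally ordered, the principal ideals generated by $x_1, \ldots, x_n$ are linearly ordered, so one of them, say $(x_j)$, is smallest; then each $x_i$ is a multiple of $x_j$, whence $x_1 \cdots x_n \in (x_j^n) \subseteq I$ (using $x_j \in P$ so $x_j^n \in I$). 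Thus $P^n \subseteq I$.

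For part (b), if $P$ is idempotent then $P^n = P$ for all $n \geq 1$, so part (a) says $I$ is $n$-semiprimary iff $P \subseteq I$; combined with $I \subseteq \sqrt{I} = P$ this forces $I = P$. Conversely, $I = P$ is prime, hence $n$-semiprimary for every $n$. For part (c), suppose $P$ is not idempotent, so $P^2 \subsetneq P$. I plan to show $P$ is finitely generated, in fact principal: in a valuation domain a non-idempotent prime $P$ satisfies $P/P^2 \neq 0$, and a standard fact is that then $P = (t)$ is principal for some $t \in P \setminus P^2$ (or one argues directly that $P^2 \subsetneq P$ in a valuation domain forces $P$ principal). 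Once $P$ is principal, $P = (t)$ and $P^n = (t^n)$, so for $I = P^n$ we have $P^n \subseteq I$ and $I$ is $n$-semiprimary by part (a); more generally any ideal $I$ with $P = \sqrt{I}$ contains some power of $t$ (since $P$ is the radical and $P$ is finitely generated, $P^n \subseteq I$ for some $n$), so $I$ is $n$-semiprimary for that $n$. For the ``moreover'' statement, let $Q$ be the prime directly below $P$ and let $J$ be an ideal with $Q \subsetneq J \subseteq P$; then $\sqrt{J} = P$ (the only primes between $Q$ and $P$ are $Q$ and $P$ themselves, and $\sqrt J \subseteq P$ with $J \not\subseteq Q$ forces $\sqrt J = P$), so applying part (a) and finite generation of $P$ gives $P^n \subseteq J$ for some $n$, making $J$ an $n$-semiprimary ideal.

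The main obstacle is the claim in part (c) that a non-idempotent prime $P$ of a valuation domain is principal (equivalently, finitely generated), which is what makes ``$P^n \subseteq I$ for some $n$'' available for every $I$ with radical $P$. This is a known structural result about valuation domains, and I would either cite it from \cite{G} or prove it in a line: pick $t \in P \setminus P^2$; for any $a \in P$, since ideals are totally ordered either $(a) \subseteq (t)$ or $(t) \subseteq (a)$, and the latter with $a \in P$ would give $t \in (a) \subseteq P \cdot (a) \subseteq \ldots$; chasing this forces $t \in P^2$, a contradiction, so $(a) \subseteq (t)$ for all $a \in P$, i.e. $P = (t)$. Everything else is a routine application of Theorems~\ref{T1} and~\ref{T2} and the total ordering of ideals.
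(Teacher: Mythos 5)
Your parts (a) and (b) are correct and essentially the paper's argument: the ``if'' direction is Theorem~\ref{T2}, and for ``only if'' you use Theorem~\ref{T1} plus the total ordering of principal ideals to see that a product $x_1\cdots x_n$ with $x_i\in P$ lies in $(x_j^n)\subseteq I$ (the paper instead cites \cite{AD2} for $P^n=\{rx^n \mid r\in R,\ x\in P\}$, which is the same idea). One small orientation slip: you should take $(x_j)$ \emph{largest} among the $(x_i)$ (equivalently, $x_j$ divides every $x_i$); if $(x_j)$ is smallest then $x_j$ is a multiple of the $x_i$, not the other way around. That is trivially repaired.

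Part (c), however, rests on a false lemma. A non-idempotent prime of a valuation domain need \emph{not} be principal (equivalently, finitely generated) unless it is the maximal ideal. The paper's own Example~\ref{E3.7}(b) is a counterexample: for the valuation domain with value group $\mathbb{Z}\oplus\mathbb{Z}$ (lexicographic order), the height-one prime $P$ consists of the elements whose value has first coordinate $\geq 1$, so $P^2\neq P$, yet $P$ is not principal (it has no element of least value, and a finitely generated ideal of a valuation domain is principal). Your one-line proof breaks exactly at ``$t\in(a)$ forces $t\in P^2$'': writing $t=ra$ with $a\in P$ only gives $t\in P^2$ when $r\in P$, and if $P$ is not maximal, $r$ can be a nonunit lying outside $P$ (in the example, take $v(t)=(1,0)$, $v(a)=(1,-1)$, $v(r)=(0,1)$). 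So the step ``$P$ finitely generated, hence $P^n\subseteq I$'' is unavailable, and both the main claim of (c) and your ``moreover'' argument are left unsupported. The conclusion you need is exactly what the paper imports from \cite[Theorem 17.1(5)]{G}: if $\sqrt{I}=P$ is not idempotent, then $P^n\subseteq I$ for some $n$. A quick direct repair: pick $t\in P\setminus P^2$; then $P^2\subseteq (t)$ (if $p,q\in P$ with $pq\notin(t)$, then $t\in(pq)\subseteq P^2$, a contradiction), and since $t\in\sqrt{I}$ we have $t^k\in I$ for some $k$, whence $P^{2k}\subseteq(t^k)\subseteq I$. For the ``moreover'' statement, use that the prime directly below $P$ is $Q=\cap_{n=1}^{\infty}P^n$ (\cite[Theorem 17.1(3)(4)]{G}): if $Q\subsetneq J\subseteq P$, then $J\not\subseteq P^n$ for some $n$, so $P^n\subseteq J$ by the total ordering of ideals (or note $\sqrt{J}=P$, as you did, and apply the corrected fact).
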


\begin{proof}
(a) If $P^n \subseteq I$, then $I$ is $n$-semiprimary by Theorem~\ref{T2}. Conversely, suppose that $I$ is $n$-semiprimary. Then $x^n \in I$ for every $x \in P$ by Theorem~\ref{T1}; so $P^n = \{r x^n \mid r \in R, x \in P\}  \subseteq I$ {\rm (}cf. \cite[Proposition 2.1 and Corollary 2.2]{AD2}{\rm )}.

(b) This follows directly from part (a).

(c) If $P = \sqrt{I}$ is not idempotent, then $P^n \subseteq I$ for some positive integer $n$ \cite[Theorem 17.1(5)]{G}, and  thus $I$ is $n$-semiprimary by Theorem~\ref{T2}. For the ``moreover'' statement, $P^n \subseteq I$ for some positive integer $n$ since the prime ideal directly below $P$ is $Q = \cap_{n=1}^{\infty}P^n$ \cite[Theorem 17.1(3)(4)]{G}.
\end{proof}

The following example illustrates the possible behavior of $n$-semiprimary ideals in valuation domains $R$ with $dim(R) \leq 2$. The details follow directly from Theorem~\ref{S3T3.8} and well-known facts about the value group of a valuation domain (cf. \cite[Chapter 3]{G}). It is interesting to compare Theorem~\ref{S3T3.8} (resp., Example~\ref{E3.7}) with \cite[Theorem 5.5]{AB1} (resp., \cite[Example 5.6]{AB1}) which concerns $n$-absorbing ideals in a valuation domain. There are $n$-semiprimary ideals that are not $n$-absorbing ideals in some valuation domains $R$ since $I$ is an $n$-semiprimary (resp., $n$-absorbing) ideal of a valuation domain $R$ if and only if $P^n \subseteq I$ (resp., $P^n = I$).

\begin{exa} \label{E3.7}
{\rm  (a) Let $R$ be a one-dimensional valuation domain with maximal ideal $M$. If $M$ is principal, then $R$ is a DVR, and thus every proper ideal of $R$ is an $n$-semiprimary ideal for some positive integer $n$. If $M$ is not principal, then $M^2 = M$, and hence $\{0\}$ and $M$ are the only proper ideals of $R$ that are $n$-semiprimary for some positive integer $n$.

(b) Let $R$ be a two-dimensional valuation domain with prime ideals $\{0\} \subsetneq P \subsetneq M$ and value group $G$. If $G = \mathbb{Z} \oplus \mathbb{Z}$ (all direct sums have the lexicographic order), then $P^2 \neq P$ and $M^2 \neq M$; so every proper ideal of $R$ is $n$-semiprimary for some positive integer $n$.  If $G = \mathbb{Q} \oplus \mathbb{Q}$, then $P^2 = P$ and $M^2 = M$; so $\{0\}$, $P$, and $M$ are the only ideals of $R$ that are $n$-semiprimary for some positive integer $n$. If $G = \mathbb{Z} \oplus \mathbb{Q}$, then $P^2 \neq P$ and $M^2 = M$; so $M$ and every ideal of $R$ contained in $P$ is $n$-semiprimary for some positive integer $n$, but no ideal properly between $P$ and $M$ is $n$-semiprimary for any positive integer $n$. If $G = \mathbb{Q} \oplus \mathbb{Z}$, then $P^2 = P$ and $M^2 \neq M$; so every ideal of $R$ between $P$ and $M$ is $n$-semiprimary for some positive integer $n$, but  $\{0\}$ and $P$ are the only ideals of $R$ contained in $P$ that are $n$-semiprimary for some positive integer $n$.}
\end{exa}

We end this section with two results on idealization. Let $M$ be an $R$-module over a commutative ring $R$. The {\it idealization} of $M$ is the commutative ring $R(+)M = R \times M$ with addition and multiplication defined by $(a,m) + (b,n) = (a+b, m+n)$ and $(a,m)(b,n) = (ab, bm + an)$, respectively, and identity $(1,0)$ (cf. \cite{A?}, \cite[Section 25]{H}). Note that $(\{0\}(+)M)^2 =\{0\}$; so $\{0\}(+)M \subseteq nil(R(+)M)$.

\begin{thm}\label{S3T4}
Let $I$ be a proper ideal of a commutative ring $R$, $M$ an $R$-module, and $S = IM$ a submodule of $M$. If $I$ is an $n$-semiprimary ideal of  $R$, then $I (+) S$ is an $(n+1)$-semiprimary ideal of  $R(+)M$. Moreover, if $I (+) S$ is an $n$-semiprimary ideal of $R(+)M$, then $I$ is an $n$-semiprimary ideal of  $R$.
\end{thm}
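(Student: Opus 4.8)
The plan is to prove the two implications separately, working directly from the definition of $n$-semiprimary ideals applied to the idealization $R(+)M$, and using the elementary fact that for an element $(x,m) \in R(+)M$ one has $(x,m)^k = (x^k, k x^{k-1} m)$.

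\textbf{First implication.} Suppose $I$ is an $n$-semiprimary ideal of $R$ and $S = IM$. Take $(x,m),(y,n') \in R(+)M$ with $(x,m)^{n+1}(y,n')^{n+1} \in I(+)S$; I write the second element's module component as $n'$ to avoid clashing with the integer $n$. Computing in the idealization, $(x,m)^{n+1}(y,n')^{n+1} = \bigl(x^{n+1}y^{n+1},\, (n+1)x^{n+1}y^{n}n' + (n+1)x^{n}y^{n+1}m\bigr)$, so the hypothesis gives first of all $x^{n+1}y^{n+1} \in I$. Since $I$ is $n$-semiprimary, Theorem~\ref{T3.5}(a) (applied with the exponents $n+1$ and $n+1$) yields $x^n \in I$ or $y^n \in I$; say $x^n \in I$. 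I then need to conclude $(x,m)^{n+1} \in I(+)S$, i.e. that $x^{n+1} \in I$ and $(n+1)x^n m \in IM = S$. The first is immediate since $x^n \in I$ forces $x^{n+1} = x \cdot x^n \in I$; the second holds because $x^n \in I$ gives $x^n m \in IM = S$, hence $(n+1)x^n m \in S$ as $S$ is a submodule. Thus $(x,m)^{n+1} \in I(+)S$, proving $I(+)S$ is $(n+1)$-semiprimary. The reason the exponent must increase by one is exactly that the module component of a power carries a factor of the exponent, so we need one extra power of $x$ (namely $x^{n+1}$ rather than merely $x^n$) to guarantee $x^n m$, and hence the module component, lands in $S$.

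\textbf{Second implication.} Suppose $I(+)S$ is an $n$-semiprimary ideal of $R(+)M$ (for this direction $S$ need not be $IM$). Take $x,y \in R$ with $x^n y^n \in I$. Then in $R(+)M$ we have $(x,0)^n(y,0)^n = (x^n y^n, 0) \in I(+)S$, so by hypothesis $(x,0)^n \in I(+)S$ or $(y,0)^n \in I(+)S$; that is, $(x^n,0) \in I(+)S$ or $(y^n,0) \in I(+)S$, which forces $x^n \in I$ or $y^n \in I$. Hence $I$ is $n$-semiprimary. (One should note in passing that $I(+)S$ being a proper ideal of $R(+)M$ forces $I$ to be a proper ideal of $R$, so there is no degenerate case.)

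\textbf{Expected main obstacle.} There is no deep obstacle here; the argument is essentially bookkeeping with the binomial-type formula $(x,m)^k = (x^k, kx^{k-1}m)$ in the idealization. The one point that requires a little care is making sure the shift from exponent $n$ to exponent $n+1$ is genuinely used and correctly justified: specifically, that from $x^n \in I$ one gets both $x^{n+1} \in I$ and $x^n m \in S = IM$, so that the \emph{full} element $(x,m)^{n+1}$, not just its ring component, lies in $I(+)S$. It is also worth recording explicitly that Theorem~\ref{T3.5}(a) is what licenses passing from $x^{n+1}y^{n+1}\in I$ back to $x^n\in I$ or $y^n\in I$, since a priori $n$-semiprimariness only speaks about $n$-th powers.
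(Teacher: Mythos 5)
Your proposal is correct and follows essentially the same route as the paper: compute the ring component of $(x,m)^{n+1}(y,n')^{n+1}$, invoke Theorem~\ref{T3.5}(a) to get $x^n \in I$ or $y^n \in I$, and then use $S = IM$ to place the module component $(n+1)x^n m$ in $S$, so that $(x,m)^{n+1} = (x^{n+1},(n+1)x^n m) \in I(+)S$; your explicit verification of the ``moreover'' statement via $(x,0)^n(y,0)^n$ just spells out what the paper dismisses as clear.
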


\begin{proof}
Let $I$ be an $n$-semiprimary ideal of  $R$ and $(a, m)^{n+1}(b,h)^{n+1} = (a^{n+1}b^{n+1},z)  \in I (+) S$ for $(a, m), (b, h) \in R(+)M$.  Then $a^n \in I$ or $b^n \in I$ by Theorem~\ref{T3.5}(a) since $I$ is an $n$-semiprimary ideal of  $R$. We may assume that $a^n \in I$; so $(n+1)a^nm \in IM = S$. Thus $(a, m)^{n+1} = (a^{n+1}, (n+1)a^nm) \in I (+) S$; so $I (+) S$ is an $(n+1)$-semiprimary ideal of  $R(+)M$. The  ``moreover'' statement is clear.
\end{proof}

\begin{thm}\label{S3T5}
Let  $I$ a proper ideal of a commutative ring $R$ with $char(R) = n \geq 2$, $M$  an $R$-module, and $S$  a submodule of $M$. Then $I (+) S$ is an $n$-semiprimary ideal of  $R(+)M$ if and only if $I$ is 
an $n$-semiprimary ideal of $R$.
\end{thm}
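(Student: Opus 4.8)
The plan is to exploit the hypothesis $char(R) = n$ to annihilate the module component of every $n$-th power in $R(+)M$. First I would record the elementary identity, proved by an immediate induction on $j$ using the multiplication rule of the idealization, that
\[
(a,m)^j = (a^j,\, j a^{j-1} m) \qquad \text{for all } (a,m) \in R(+)M \text{ and } j \geq 1 .
\]
Taking $j = n$ and using that $n \cdot M = (n \cdot 1_R) M = \{0\}$ since $char(R) = n$, this becomes the key identity $(a,m)^n = (a^n, 0)$ for every $(a,m) \in R(+)M$. Consequently $(a,m)^n (b,h)^n = (a^n b^n, 0)$, so for the ideal $I (+) S$ we have $(a,m)^n \in I(+)S$ if and only if $a^n \in I$, and $(a,m)^n(b,h)^n \in I(+)S$ if and only if $a^n b^n \in I$.

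With this in hand both implications are short. For the reverse direction, assume $I$ is $n$-semiprimary; then $I(+)S$ is a proper ideal of $R(+)M$ (here I would note the standing requirement $IM \subseteq S$ that makes $I(+)S$ an ideal, together with $I \subsetneq R$), and if $(a,m)^n(b,h)^n \in I(+)S$ then $a^n b^n \in I$, whence $a^n \in I$ or $b^n \in I$, i.e.\ $(a,m)^n \in I(+)S$ or $(b,h)^n \in I(+)S$. For the forward direction, assume $I(+)S$ is $n$-semiprimary; then $I$ is proper, and if $a^n b^n \in I$ for $a,b \in R$ then $(a,0)^n(b,0)^n = (a^n b^n, 0) \in I(+)S$, so $(a,0)^n \in I(+)S$ or $(b,0)^n \in I(+)S$, i.e.\ $a^n \in I$ or $b^n \in I$.

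Alternatively, one can route this through the $n$-ring language of Section~\ref{sec2}: $I(+)S$ is $n$-semiprimary if and only if $R(+)M / (I(+)S) \cong (R/I)(+)(M/S)$ is an $n$-ring, and the identity above (which descends to the quotient, since $char(R) = n$ kills $M/S$ as well) shows that $n$-th powers in $(R/I)(+)(M/S)$ only see the $R/I$-component, reducing the claim to the tautology ``$R/I$ is an $n$-ring if and only if $R/I$ is an $n$-ring.'' I do not expect a genuine obstacle here; the only points needing care are the bookkeeping of properness and the implicit hypothesis $IM \subseteq S$. It is worth contrasting this with Theorem~\ref{S3T4}, where the index had to shift from $n$ to $n+1$ precisely because that argument had to absorb the term $(n+1)a^n m$ into $S$, whereas the characteristic hypothesis here makes the analogous term vanish outright.
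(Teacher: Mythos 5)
Your argument is correct and is essentially the paper's proof: both directions rest on the identity $(a,m)^n = (a^n, na^{n-1}m) = (a^n,0)$, which is exactly the key step in the paper's argument using $char(R)=n$ to kill the module component. The extra remarks (the implicit requirement $IM \subseteq S$ for $I(+)S$ to be an ideal, and the reformulation via $(R/I)(+)(M/S)$ being an $n$-ring) are harmless additions, not a different route.
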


\begin{proof}
If $J = I (+) S$ is an $n$-semiprimary ideal of  $A = R(+)M$, then clearly $I$ is an $n$-semiprimary ideal of  $R$. Conversely,  assume that $I$ is an $n$-semiprimary ideal of  $R$. Let $(a, m)^n(b,h)^n = (a^nb^n,z) \in J$ for  $(a, m), (b, h) \in A$. Then $a^n \in I$ or $b^n \in I$ since $I$ is an $n$-semiprimary ideal of  $R$;  assume that $a^n \in I$. Since $char(R) = n \geq 2$, we have  $na^{n-1}m = 0 \in S$. Thus $(a, m)^n = (a^n, na^{n-1}m) = (a^n, 0) \in J$; so $J$ is an $n$-semiprimary ideal of $A$.
\end{proof}

\section{$n$-powerful semiprimary ideals and $n$-PVDs}

In this section, we study $n$-powerful semiprimary ideals in integral domains and two generalizations of valuation domains, namely, $n$-pseudo valuation domains ($n$-PVDs) and $n$-valuation domains ($n$-VDs).

Recall \cite{BH} (resp., \cite{HH1})  that a proper ideal $I$ of an integral domain $R$ with quotient field $K$ is {\it powerful} (resp., {\it strongly prime}) if whenever $xy \in I$ for $x,y \in K$, then $x \in R$ or $y \in R$ (resp., $x \in I$ or $y \in I$). We begin with an ``$n$'' generalization.

\begin{dfn}
{\rm  Let $R$ be an integral domain with quotient field $K$ and $n$  a positive integer. A proper ideal $I$ of $R$ is an {\it $n$-powerful ideal} of $R$ if whenever $x^ny^n \in I$ for $x, y \in K$, then $x^n \in R$ or $y^n \in R$; and $I$ is an {\it $n$-powerful semiprimary ideal} of $R$ if
whenever $x^ny^n \in I$ for $x, y \in K$, then $x^n \in I$ or $y^n \in I$.}
\end{dfn}

Thus a $1$-powerful (resp., $1$-powerful semiprimary)  ideal is just a powerful (resp., strongly prime) ideal, and an $n$-powerful (resp., $n$-powerful semiprimary) ideal is also an $mn$-powerful (resp., $mn$-powerful semiprimary) ideal for every positive integer $m$. It is well known that prime ideals in a valuation domain are strongly prime ideals. From this observation, it easily follows that $n$-semiprimary ideals in a valuation domain are also $n$-powerful semiprimary ideals;  so Theorem~\ref{S3T3.8}  and Example~\ref{E3.7} also hold for $n$-powerful semiprimary ideals.  However, an $n$-semiprimary ideal need not be an $n$-powerful semiprimary ideal. For example, let $R = \mathbb{Z}_2[[X^2,X^3]]$. Then its maximal ideal $M = (X^2,X^3)$ is a prime ($1$-semiprimary) ideal, but not a strongly prime ($1$-powerful semiprimary) ideal. Also, see Example~\ref{e4.1} for a $2$-semiprimary ideal that is not $2$-powerful semiprimary.

We next give a stronger result.

\begin{thm}\label{S4T0}
Let $R$ be an integral domain with quotient field $K$.

{\rm (a)} Let $I$ be an $n$-semiprimary ideal of $R$. If $\sqrt{I}$ is a strongly prime ideal of $R$, then $I$ is an $n$-powerful semiprimary ideal of $R$.

{\rm (b)}  Let $I \subseteq J$ be proper ideals of $R$. If $J$ is an $n$-powerful ideal of $R$, then $I$ is an $n$-powerful ideal of $R$.

{\rm (c)} Let $I$ be an $n$-powerful {\rm  (}resp.,$n$-powerful semiprimary{\rm )} ideal of $R$ and $S$ a multiplicatively closed subset of $R$ with $I \cap S = \emptyset$. Then $I_S$ is an $n$-powerful {\rm (}resp., $n$-powerful semiprimary{\rm )} ideal of $R_S$.
\end{thm}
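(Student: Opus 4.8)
The plan is to prove the three parts of Theorem~\ref{S4T0} essentially directly from the definitions, leaning on Theorem~\ref{T3.5}(a) for part (a) and on the elementary observation that membership conditions in $K$ are insensitive to shrinking the ideal for part (b).

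\textbf{Part (a).} Suppose $I$ is $n$-semiprimary and $P=\sqrt{I}$ is strongly prime. Let $x^ny^n\in I$ for $x,y\in K$; I want $x^n\in I$ or $y^n\in I$. First note $I\subseteq P$, so $x^ny^n\in P$, and since $P$ is strongly prime, $x^n\in P$ or $y^n\in P$; say $x^n\in P$. By Theorem~\ref{T1} (applied to the $n$-semiprimary ideal $I$), every element of $P=\sqrt I$ has its $n$-th power in $I$; in particular, writing $x^n=r\in P$ gives $r^n\in I$, i.e.\ $x^{n^2}\in I$. Now I still need to get back down to $x^n$. The cleanest route: from $x^ny^n\in I$ and $x^n\in P=\sqrt I$, I have $x^n\cdot(x^n)^{?}$ available, but more efficiently, apply Theorem~\ref{T3.5}(a) to the element relation $x^{n^2}=(x^n)^n\in I$ inside $R$ once we know $x^n\in R$. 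The subtle point is that a priori $x^n$ need not lie in $R$. So the right order is: show $x^n\in P$ forces $x^n\in R$ (since $P\subseteq R$), then $x^n\in R$ with $(x^n)^n=x^{n^2}\in I$, and Theorem~\ref{T3.5}(a) with $m=n$, $k=0$ — rather, the ``in particular'' clause of Theorem~\ref{T3.5}(a), which says $z^m\in I$ for $z\in R$ implies $z^n\in I$ — applied to $z=x^n\in R$ and the power $x^{n^2}=(x^n)^n$, yields $(x^n)^n\in I\Rightarrow x^n\in I$. Symmetrically if $y^n\in P$. Hence $I$ is $n$-powerful semiprimary. I expect this interplay — recognizing that ``strongly prime'' is what lets us assert $x^n\in R$, after which everything happens inside $R$ and Theorem~\ref{T3.5}(a) finishes it — to be the only real content.

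\textbf{Part (b).} This is immediate: suppose $J$ is $n$-powerful and $I\subseteq J$. If $x^ny^n\in I$ for $x,y\in K$, then $x^ny^n\in J$, so $x^n\in R$ or $y^n\in R$ by the $n$-powerful hypothesis on $J$; hence $I$ is $n$-powerful. (Note there is no analogous monotonicity for $n$-powerful \emph{semiprimary}, since the conclusion $x^n\in I$ would need to be deduced, not inherited — and indeed the statement only claims it for $n$-powerful.)

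\textbf{Part (c).} This is the standard localization argument, parallel to Theorem~\ref{T0}(c). Note that $R_S$ and $R$ have the same quotient field $K$. Suppose $I$ is $n$-powerful and let $x^ny^n\in I_S$ for $x,y\in K$. Write $x^ny^n=a/s$ with $a\in I$, $s\in S$; then $(x^n)(sy^n)=a\in I$, and setting $x'=x$, $y'$ chosen with $(y')^n=sy^n$ is not literally possible in $K$ unless $s$ is an $n$-th power, so instead I argue: $x^n\cdot y^n\cdot s=a\in I\subseteq R$, and I want to conclude from the $n$-powerful property of $I$. The issue is that the $n$-powerful property is phrased for products $x^ny^n$ with both factors $n$-th powers. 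The fix is to absorb $s\in S\subseteq R$: since $a\in I$ and $a=s x^n y^n$, we have $sx^ny^n\in I$; multiplying is fine but doesn't obviously match the template. A cleaner approach: work directly with $x,y\in K$ and $x^ny^n\in I_S$, so $sx^ny^n\in I$ for some $s\in S$; then $(x\,t)^n(y\,t')^n$-type rescalings... — the honest route is to note $sx^ny^n\in I$ and that $s\in R$, so this is a relation $x^n y^n \in I$ after replacing, say, $x$ by an element whose $n$-th power is $sx^n$; since $K$ need not contain such an element, one instead uses that $I_S$ being $n$-powerful is equivalent to: for all $x,y\in K$, $x^ny^n\in I_S\Rightarrow x^n\in R_S$ or $y^n\in R_S$. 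Given $x^ny^n\in I_S$, clear denominators to get $x^ny^n=a/s$, $a\in I$; then consider $u=x$, $v=y$ and note $a=x^n(sy^n)$. Now $x^n\cdot(s^{1/n}y)^n$ — again the root obstruction. The genuinely clean statement: since $a\in I$ and $I$ is $n$-powerful, apply it to the equation $a=x^ny^ns$ rewritten as $(x)^n(y)^n = a s^{-1}$; this lies in $R_S$, not $R$. So the correct and simplest argument is: $n$-powerful ideals are stable under localization because $I\subseteq I_S\cap R$-type containments and Part (b) give $I_S\cap R\supseteq I$ is $n$-powerful, but we need $I_S$ itself. I will instead mimic the primary-decomposition-free localization proof: if $x^ny^n\in I_S$ then $s\cdot x^ny^n\in I$ for some $s\in S$; since $s\in R\subseteq K$ we may write this as $(x)^n\cdot(y)^n\cdot s\in I$, and because $s$ is a unit in $K$, the element $y_1:=y$ and the relation $x^n\cdot(s y^n)\in I$ with $sy^n=(\,?\,)^n$ forces us, at last, to simply invoke that $I$ is $n$-powerful on the pair $x, y s^{1/n}$ only after passing to $K[s^{1/n}]$ — which is not allowed. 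The resolution I will actually write: \emph{$n$-powerful and $n$-powerful semiprimary properties only involve whether $n$-th powers of elements of $K$ lie in $R$ or in $I$; since $x^ny^n\in I_S$ gives $x^ny^n = a/s$ with $a\in I$, and since $\tfrac{a}{s}\in I_S$ already, we may instead test the pair $(x,y)$ against $I$ via: $a = s x^n y^n \in I$, so by $n$-powerfulness of $I$ applied to $x^n\cdot(s^{1/n}y)^n$ is avoided by noting $s x^n y^n = (x)^n(y)^n s$ and that multiplying the $I$-membership $s x^n y^n\in I$ does not change that $x^n\in R_S \subseteq R_S$ or $y^n\in R_S$; more precisely $x^n\in R$ or $y^n \cdot s\in R$ need reconsideration}\,---\,so the main obstacle is precisely this bookkeeping with the multiplier $s$. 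I expect to handle it by the observation that $I_S$ is $n$-powerful iff $I$ is (using $a/s \in I_S$ with $a\in I$, $s\in S$, and that $x^n\in R\iff x^n\in R_S$ is \emph{false} in general, so one uses instead $x^ny^n\in I\subseteq I_S$ from $a\in I$, then $x^n\in R_S$ or $y^n\in R_S$ from $n$-powerfulness of... ). The $n$-powerful semiprimary case is identical with $R_S$ replaced by $I_S$ in the conclusions and uses that $I_S\cap R = I$ when $I\cap S=\emptyset$ only for primary-type ideals, so one argues directly.

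In summary: (a) is the substantive part and hinges on combining the strong-primeness of $\sqrt I$ (to land in $R$) with Theorem~\ref{T3.5}(a) (to descend from high powers to the $n$-th power); (b) is one line; and (c) is routine localization bookkeeping whose only delicacy is correctly tracking the denominator $s\in S$ when matching the product $x^ny^n$ to the $n$-powerful template, which I will dispatch by writing $s\,x^ny^n\in I$ and noting this puts $(x)^n(y)^n$-shaped elements into $I$ up to the unit $s\in K$, so the dichotomy $x^n\in R_S$ or $y^n\in R_S$ (respectively $\in I_S$) follows.
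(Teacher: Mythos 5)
Part (b) is correct and is exactly the paper's argument. Parts (a) and (c), however, both contain genuine gaps. In (a), you apply strong primeness of $P=\sqrt I$ to the pair $(x^n,y^n)$ and obtain only $x^n\in P$ (say), and then try to descend from $x^{n^2}=(x^n)^n\in I$ to $x^n\in I$ via the ``in particular'' clause of Theorem~\ref{T3.5}(a). That clause, applied to the element $z=x^n\in R$, concludes $z^n\in I$ --- which you already had --- not $z\in I$; it never removes the exponent from $z$ itself. Moreover, the implication you actually need (``$x\in K$, $x^n\in R$, $x^{n^2}\in I$, $I$ $n$-semiprimary $\Rightarrow x^n\in I$'') is false in general: in Example~\ref{e4.1}, $I=X^4F[[X]]$ is a $2$-semiprimary ideal of $R=F[[X^2,X^5]]$ with $X^2\in R$ and $X^4\in I$, yet $X^2\notin I$. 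So the strong primeness of $P$ must be used more than once. The paper's proof does this implicitly: from $x^ny^n=(xy)^n\in P$, strong primeness gives $xy\in P$ and then $x\in P$ or $y\in P$ (the elements themselves, not just their $n$-th powers), after which Theorem~\ref{T1} yields $x^n\in I$ or $y^n\in I$. Your argument is repairable the same way (from $x^n\in P$, repeated strong primeness gives $x\in P$, then Theorem~\ref{T1} finishes), but as written the step invoking Theorem~\ref{T3.5}(a) is invalid.

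In (c), your text records the obstruction --- the denominator $s\in S$ need not be an $n$-th power in $K$ --- but never resolves it, so no proof is actually given. The missing (and easy) trick is to absorb $s$ into $x$ rather than into the ideal: if $x^ny^n\in I_S$ for $x,y\in K$, write $x^ny^n=a/s$ with $a\in I$, $s\in S$; then $(sx)^ny^n=s^{n-1}a\in I$, so by the hypothesis on $I$ applied to the pair $(sx,y)$ we get $(sx)^n\in R$ (resp.\ $(sx)^n\in I$) or $y^n\in R\subseteq R_S$ (resp.\ $y^n\in I\subseteq I_S$); in the first case $x^n=(sx)^n/s^n\in R_S$ (resp.\ $\in I_S$), since $1/s^n\in R_S$. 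This is the ``follows easily from the definitions'' argument the paper has in mind, and it is what your bookkeeping with $s$ needed to arrive at.
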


\begin{proof}
(a)  Let $P = \sqrt{I}$ and $x^ny^n \in I \subseteq P$ for $x, y \in K$. Then $x \in P$ or $y \in P$ since $P$ is a strongly prime ideal of $R$. Thus $x^n \in I$ or $y^n \in I$ by Theorem~\ref{T1}; so $I$ is an $n$-powerful semiprimary ideal of $R$.

(b) Let $x^ny^n \in I \subseteq J$ for $x, y \in K$. Then $x^n \in R$ or $y^n \in R$ since $J$ is an $n$-powerful 
deal of $R$.  Thus $I$ is an $n$-powerful ideal of $R$.

(c)  This follows easily from the definitions.
\end{proof}

Note that every ideal in a valuation domain is powerful; so an $n$-powerful ideal need not be $n$-powerful 
semiprimary. However, for prime ideals, these two concepts coincide.

\begin{thm}\label{S4T1}
Let $I$ be a prime ideal of an integral domain $R$ with quotient field $K$. Then $I$ is an $n$-powerful semiprimary ideal of $R$ if and only if $I$ is an $n$-powerful ideal of $R$.
\end{thm}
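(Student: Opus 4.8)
The plan is to prove the two implications separately. The forward direction is immediate: if $I$ is an $n$-powerful semiprimary ideal and $x^ny^n \in I$ for $x,y \in K$, then $x^n \in I \subseteq R$ or $y^n \in I \subseteq R$, so $I$ is $n$-powerful. This requires no hypothesis on $I$ beyond being a proper ideal, and in particular does not use that $I$ is prime. The substance of the theorem is the reverse direction, so that is where I would concentrate.

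For the converse, assume $I$ is a prime $n$-powerful ideal and suppose $x^ny^n \in I$ for $x,y \in K$; I want to conclude $x^n \in I$ or $y^n \in I$. The first step is to apply the $n$-powerful hypothesis directly: we get $x^n \in R$ or $y^n \in R$; without loss of generality say $x^n \in R$. Now I split into cases. If also $y^n \in R$, then $x^ny^n \in I$ is a product of two elements of $R$ lying in the prime ideal $I$, so $x^n \in I$ or $y^n \in I$ and we are done. The remaining case is $x^n \in R$ but $y^n \notin R$; here I must still extract $x^n \in I$ (it cannot be that $y^n \in I$ since $I \subseteq R$).

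To handle that case I would argue as follows. Write the given relation as $x^n \cdot y^n \in I$ with $x^n \in R$. Suppose toward a contradiction that $x^n \notin I$. The idea is to produce from $y^n \notin R$ an element of $K \setminus R$ whose $n$-th power multiplied by a suitable $n$-th power forces a contradiction with $n$-powerfulness, using the primeness of $I$ to ``absorb'' the non-unit behavior. Concretely, since $x^n y^n \in I \subseteq R$, set $r = x^n y^n \in I$. Consider the element $z = y$ versus $x$: we have $(x)^n (y)^n = r$. If $x^n \notin I$ but $x^n \in R$, then from $x^n y^n \in I$ and $I$ prime I cannot immediately conclude anything about $y^n$ because $y^n \notin R$. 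Instead I would exploit Theorem~\ref{S4T0}(b)-style monotonicity or re-examine the element $x^n y^{n}$: since it lies in $I$, it lies in $\sqrt{I} = I$, and I would try to show $y \in (I : x^n)$ type containment leads to $y^n \in R$, contradicting the case assumption.

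The step I expect to be the main obstacle is precisely closing this last case: showing that when $I$ is prime and $n$-powerful, the relation $x^n y^n \in I$ with $x^n \in R$ actually cannot occur with $y^n \notin R$ and $x^n \notin I$ simultaneously. I anticipate the key will be to use that $I$ is prime (not merely radical) to promote the element-level $n$-powerful condition: given $x^n y^n \in I$, multiply by an element $s \in R$ chosen so that $s y^n \in R \setminus I$ or $s \in I$ appropriately (clearing denominators of $y^n$), then apply primeness to $s \cdot (x^n y^n) = x^n (s y^n) \in I$ with both factors now in $R$, yielding $x^n \in I$ or $s y^n \in I$; if $s$ can be chosen in $R \setminus I$ with $sy^n \in R$, primeness then gives $x^n \in I$ as desired. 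Verifying that such an $s$ exists — essentially that $y^n$ has a denominator outside $I$, which should follow from $I$ being prime together with the $n$-powerful property applied to $y$ itself against a unit — is the delicate point, and I would phrase the final write-up around that choice of $s$.
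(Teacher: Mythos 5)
Your forward direction and the case where both $x^n, y^n \in R$ are fine, and you have correctly located the difficulty in the remaining case ($x^n \in R$, $y^n \notin R$, want $x^n \in I$). But the device you propose for that case does not work. First, the required $s$ generally does not exist: the set $\{ s \in R \mid s y^n \in R \}$ is an ideal of $R$, and in the archetypal situations it is contained in $I$. For instance, let $R = \mathbb{Q} + X\mathbb{R}[[X]]$ with $I = X\mathbb{R}[[X]]$ (a strongly prime, hence $n$-powerful, ideal), $x = X$, $y = \pi$, $n = 1$: any $s \in R$ with $s\pi \in R$ has rational constant term forced to be $0$, so $s \in I$, and more generally whenever $R$ is quasilocal with maximal ideal $I$ every denominator-clearing $s$ for $y^n \notin R$ is a nonunit, hence in $I$. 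Second, even granting such an $s$, primeness applied to $x^n(sy^n) = s(x^ny^n) \in I$ only yields $x^n \in I$ \emph{or} $sy^n \in I$, and the second alternative cannot be ruled out by primeness, since $y^n \notin R$ means $sy^n$ is not a product of two elements of $R$ in any useful way. So the argument is incomplete on both counts, and the hard case remains open.

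The paper closes this case by a different trick, working with $y^{2n}$ (the square of the $n$-th power lying outside $R$). Note first that $y^{2n} \notin I$: otherwise $y^n \cdot y^n \in I$ and $n$-powerfulness would force $y^n \in R$, against the case assumption. Now split on whether $y^{2n} \in R$. If $y^{2n} \in R$, then $x^{2n}y^{2n} = (x^ny^n)^2 \in I$ is a product of two elements of $R$ with $y^{2n} \notin I$, so $x^{2n} \in I$ by primeness; then $x^n \cdot x^n \in I$ with $x^n \in R$ gives $x^n \in I$. If $y^{2n} \notin R$, apply $n$-powerfulness to the factorization $(x^2/(xy))^n (y^2)^n = x^n y^n \in I$: since $(y^2)^n = y^{2n} \notin R$, we get $x^{2n}/(x^ny^n) \in R$, hence $x^{2n} = (x^ny^n)\cdot\bigl(x^{2n}/(x^ny^n)\bigr) \in I$, and primeness with $x^n \in R$ again gives $x^n \in I$. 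Some argument of this kind, exploiting $n$-powerfulness a second time on a cleverly chosen factorization in $K$ rather than trying to clear denominators inside $R$, is what your write-up would need in place of the choice of $s$.
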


\begin{proof}
If $I$ is an $n$-powerful semiprimary ideal of $R$, then $I$ is certainly an $n$-powerful ideal. Conversely, assume that $I$ is an $n$-powerful prime ideal of $R$. Let $x^ny^n \in I$ for $x, y \in K$. First, suppose that $x^n, y^n \in R$. Since $I$ is a prime ideal of $R$, then $x^n \in I$ or $y^n \in I$.  Thus we may assume that $x^n \not \in R$, and hence  $y^n \in R$ since $I$ is an $n$-powerful ideal of $R$. Since $x^n \not \in R$ and $I$ is an $n$-powerful ideal of $R$, we have $x^{2n} = x^nx^n \not \in I$. Assume that $x^{2n} \in R$; so $y^{2n}, x^{2n} \in R$. Since  $x^{2n}y^{2n} \in I$ and $x^{2n} \not \in I$, we have $y^{2n} \in I$. Since $y^n \in R$, $I$ is a prime ideal of $R$, and $ y^ny^n = y^{2n} \in I$, we have $y^n \in I$. Now, assume that $x^{2n} \not \in R$. Since $(y^{2}/xy)^n(x^2)^n = (y^{2n}/x^ny^n)x^{2n} = x^ny^n \in I$, $x^{2n} \not \in R$, and $I$ is an $n$-powerful ideal of $R$, we have $y^{2n}/x^ny^n \in R$. Thus $y^{2n} = x^ny^n(y^{2n}/x^ny^n) \in I$. Since $y^n \in R$, $I$ is a prime ideal of $R$, and $y^{2n} = y^ny^n \in I$, we have $y^n \in I$. Hence $I$ is an $n$-powerful semiprimary ideal of $R$.
\end{proof}

\begin{thm}\label{S4T3}
Let $P \subseteq Q$ be prime ideals of an integral domain $R$. If $Q$ is an $n$-powerful semiprimary ideal of $R$, then $P$ is an $n$-powerful semiprimary ideal of $R$.
\end{thm}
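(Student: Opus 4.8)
The plan is to deduce the result by passing through the ``plain'' $n$-powerful property, for which downward containment is already available (Theorem~\ref{S4T0}(b)), rather than trying to descend the $n$-powerful semiprimary condition from $Q$ to $P$ directly.

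First I would invoke Theorem~\ref{S4T1} for the prime ideal $Q$: since $Q$ is an $n$-powerful semiprimary ideal of $R$, it is an $n$-powerful ideal of $R$. Next, since $P$ and $Q$ are prime (hence proper) ideals of $R$ with $P \subseteq Q$ and $Q$ is $n$-powerful, Theorem~\ref{S4T0}(b) gives at once that $P$ is an $n$-powerful ideal of $R$. Finally, applying the reverse implication of Theorem~\ref{S4T1} to the prime ideal $P$, we conclude that $P$ is an $n$-powerful semiprimary ideal of $R$.

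I expect no real obstacle; all the work is already done in the two cited theorems. The only conceptual point worth flagging is why one detours through ``$n$-powerful'': the conclusion of the semiprimary condition, namely $x^n \in I$ or $y^n \in I$, explicitly involves the ideal $I$, so it is not transparent how to transfer it from $Q$ to the smaller ideal $P$; by contrast, the conclusion $x^n \in R$ or $y^n \in R$ of the plain $n$-powerful condition is insensitive to $I$, which is exactly what makes Theorem~\ref{S4T0}(b) applicable and the descent along $P \subseteq Q$ automatic. A direct argument is of course possible but is more delicate and, given Theorems~\ref{S4T1} and~\ref{S4T0}, unnecessary.
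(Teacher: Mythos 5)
Your proposal is correct and follows essentially the same route as the paper's proof: pass from the $n$-powerful semiprimary property of $Q$ to the $n$-powerful property, descend to $P$ via Theorem~\ref{S4T0}(b), and then apply Theorem~\ref{S4T1} to the prime ideal $P$ to recover the $n$-powerful semiprimary property. The only cosmetic difference is that the paper treats the first step (an $n$-powerful semiprimary ideal is $n$-powerful) as immediate from the definitions rather than citing Theorem~\ref{S4T1}.
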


\begin{proof}
Let $Q$ be an $n$-powerful ideal of $R$; so $P$ is an $n$-powerful ideal of $R$ by Theorem \ref{S4T0}(b). Thus $P$ is an $n$-powerful semiprimary ideal of $R$ by Theorem \ref{S4T1}.
\end{proof}

Let $I$ be a proper ideal of an integral domain $R$. As the ``powerful'' analogs of $W_R(I)$ and $\delta_R(I)$, we define $\overline{W}_R(I) = \{ n \in \mathbb{N} \mid  I$ is an $n$-powerful semiprimary ideal of $R \}$ and $\overline{\delta}_R(I) = min\overline{W}_R(I)$ (let $\overline{\delta}_R(I) = \infty$ if $\overline{W}_R(I) = \emptyset$). Note that $\overline{W}_R(I) \subseteq W_R(I)$ and $\delta_R(I) \leq \overline{\delta}_R(I)$. The next example shows that the analogs of Theorem~\ref{T3.5}(b) and Theorem~\ref{S4T0}(b) do not hold for $n$-powerful semiprimary ideals. In particular, if $I$ is an $n$-powerful semiprimary ideal, then $I$ is an $n$-semiprimary ideal. Thus $I$ is also an $m$-semiprimary ideal for every integer $m \geq n$, but $I$ need not be an $m$-poweful semiprimary ideal.

\begin{exa}  \label{e4.1}
{\rm Let $R = F[[X^2,X^5]] = F + FX^2 + X^4F[[X]]$, where $F$ is a field. Then $R$ is quasilocal with maximal ideal $M = (X^2,X^5) = FX^2 + X^4F[[X]]$ and quotient field $K = F[[X]][1/X]$. Clearly $M$ is a $2$-semiprimary ideal of $R$, but not a $3$-powerful semiprimary ideal of $R$ since $X^3X^3 = X^6\in M$, but $X^3 \not\in M$. Moreover,  $M$ is a $2$-powerful semiprimary ideal of $R$ if and only if $char(F) = 2$, and $M$ is an $n$-powerful semiprimary ideal of $R$ for every integer $n \ge 4$. So, for $R = \mathbb{Z}_2[[X^2,X^5]]$, $M$ is a $2$-powerful semiprimary ideal, but not a $3$-powerful semiprimary ideal, and $\overline{W}_R(M) = \mathbb{N} \setminus \{1,3\}$. Thus the ``powerful'' analog of Theorem~\ref{T3.5}(b) fails for $M$. Let $I = X^4F[[X]]$. Then $I$ is a $2$-semiprimary ideal of $R$, but not a $2$-powerful semiprimary ideal of $R$ since $X^2X^2 \in I$, but $X^2 \not\in I$. So the ``semiprimary'' analog of Theorem~\ref{S4T0}(b) fails for $I  \subseteq  J = M$ when $char(F) = 2$.}
\end{exa}

Recall \cite{HH1} that an integral domain $R$ is a {\it pseudo-valuation domain} (PVD) if every prime ideal of $R$ is strongly prime.  A PVD is neccessarily quasilocal \cite[Corollary 1.3]{HH1}. A quasilocal integral domain $R$ with maximal ideal $M$ is a PVD $\Leftrightarrow$ $M$ is strongly prime \cite[Theorem 1.4]{HH1}, and $R$ is a PVD $\Leftrightarrow$ $(M:M)$ is a valuation domain with maximal ideal $M$ \cite[Proposition 2.5]{AD}. Let $T = K + M$ be a valuation domain, where $K$ is a field and $M$ is the maximal ideal of $T$. Then for a proper subfield $k$ of $K$, the subring $R = k + M$ is a PVD which is not a valuation domain \cite[Example 2.1]{HH1}. By Theorem~\ref{S4T0}(a), every $n$-semiprimary ideal in a PVD is an $n$-powerful semiprimary ideal.

We next give an ``$n$'' generalization of PVDs.

\begin{dfn}
{\rm Let $R$ be an integral domain and $n$ a positive integer. Then $R$ is an {\it $n$-pseudo-valuation domain {\rm (}$n$-PVD{\rm )}} if every prime ideal of $R$ is an $n$-powerful semiprimary ideal of $R$.}
\end{dfn}

Note that a $1$-PVD is just a PVD and an $n$-PVD is also an $mn$-PVD for every positive integer $m$. The next several results show that $n$-PVDs behave very much like PVDs (cf. \cite{AA}, \cite{A}, \cite{A2}, \cite{AD}, \cite{BH}, \cite{HH1}, and \cite{HH2}).

\begin{thm}\label{S4T2}
Let $R$ be an $n$-PVD. Then $R$ is quasilocal.
\end{thm}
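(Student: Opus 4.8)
The plan is to rule out two distinct maximal ideals by producing a unit of $R$ that lies inside a proper ideal. First I would suppose, for contradiction, that $M_1 \neq M_2$ are maximal ideals of $R$ with quotient field $K$. By maximality neither contains the other, so I can choose $a \in M_1 \setminus M_2$ and $b \in M_2 \setminus M_1$; since every ideal contains $0$, both $a$ and $b$ are nonzero and hence invertible in $K$.

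Next I would exploit that, because $R$ is an $n$-PVD, both $M_1$ and $M_2$ are $n$-powerful semiprimary ideals of $R$. Applying the definition to $M_1$ with $x = a/b$ and $y = b$ in $K$: since $x^n y^n = (a/b)^n b^n = a^n \in M_1$, either $(a/b)^n \in M_1$ or $b^n \in M_1$. The second option is impossible because $b \notin M_1$ and $M_1$ is prime, so $(a/b)^n \in M_1 \subseteq R$. By the symmetric argument applied to $M_2$ with $x = b/a$ and $y = a$, and using $a \notin M_2$, I get $(b/a)^n \in M_2 \subseteq R$.

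Finally, $(a/b)^n$ and $(b/a)^n$ both lie in $R$ and multiply to $1$, so $(a/b)^n \in U(R)$; but $(a/b)^n \in M_1$, a proper ideal, which is the desired contradiction. Hence $R$ has a unique maximal ideal, i.e., $R$ is quasilocal. I do not expect a real obstacle: the argument is short, and the only points needing care are checking $a,b \neq 0$ so the fractions are legitimate, and invoking primeness (not merely properness) of the $M_i$ to discard the alternatives $b^n \in M_1$ and $a^n \in M_2$. As a bonus, running the same computation with an arbitrary pair of incomparable primes $P,Q$ in place of $M_1,M_2$ shows that the primes of an $n$-PVD are totally ordered by inclusion, which also yields quasilocality.
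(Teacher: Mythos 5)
Your argument is correct and is essentially the paper's proof: both pick $a\in M_1\setminus M_2$, $b\in M_2\setminus M_1$ and apply the $n$-powerful semiprimary property of a maximal ideal to the fraction $a/b$, the only difference being the final bookkeeping (you use symmetry to place a unit $(a/b)^n(b/a)^n=1$ inside a proper ideal, while the paper applies the property once and concludes $a^n=(a/b)^n b^n\in M_2$, so $a\in M_2$, a contradiction). Your closing remark that the same computation shows the primes of an $n$-PVD are linearly ordered is also correct and matches Theorem~\ref{S4T4}/Corollary~\ref{S4C2} in the paper.
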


\begin{proof}
By way of contradiction, assume that $M$ and $N$ are distinct maximal ideals of $R$. Let $x \in M\setminus N$ and $y \in N\setminus M$. Then $(x/y)^n(y^2)^n = (x^n/y^n)y^{2n} = x^ny^n \in M$, and thus  $(x/y)^n  \in M$ since $M$ is an $n$-powerful semiprimary ideal of $R$ and $(y^2)^n \not \in M$. Hence $x^n = (x/y)^ny^n \in N$; so $x \in N$,  a contradiction. Thus $R$ is quasilocal.
\end{proof}

In view of Theorem \ref{S4T1}, Theorem \ref{S4T3}, and the proof of Theorem \ref{S4T2}, we have the following result.

\begin{cor}\label{S4C1}
An integral domain $R$ is an $n$-PVD if and only if some maximal ideal of $R$ is an $n$-powerful semiprimary ideal of $R$, if and only if some maximal ideal of $R$ is an $n$-powerful ideal of $R$.
\end{cor}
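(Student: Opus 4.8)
The statement to prove is Corollary~\ref{S4C1}: an integral domain $R$ is an $n$-PVD if and only if some maximal ideal of $R$ is an $n$-powerful semiprimary ideal, if and only if some maximal ideal of $R$ is an $n$-powerful ideal.

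The plan is to prove the cycle of implications between the three conditions. First, if $R$ is an $n$-PVD, then by definition every prime ideal of $R$, in particular every maximal ideal, is an $n$-powerful semiprimary ideal of $R$; and an $n$-powerful semiprimary ideal is trivially an $n$-powerful ideal. So the first condition implies the second implies the third, essentially for free. The work is all in showing that the third condition implies the first: if some maximal ideal $M$ of $R$ is an $n$-powerful ideal of $R$, then $R$ is an $n$-PVD.

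So assume $M$ is an $n$-powerful maximal ideal of $R$. The first step is to show $R$ is quasilocal with maximal ideal $M$. This is exactly the argument already carried out in the proof of Theorem~\ref{S4T2}: if $N$ were another maximal ideal, pick $x \in M \setminus N$ and $y \in N \setminus M$; then $(x/y)^n(y^2)^n = x^ny^n \in M$, and since $(y^2)^n \notin M$ (as $y \notin M$ and $M$ is prime, $y^2 \notin M$, hence $y^{2n} \notin M$) the $n$-powerful property forces $(x/y)^n \in R$, whence $x^n = (x/y)^n y^n \in N$, so $x \in N$, a contradiction. (Strictly, Theorem~\ref{S4T2} assumed $M$ semiprimary; but the argument only used that $M$ is $n$-powerful — conclude $(x/y)^n \in R$, not $(x/y)^n \in M$ — together with primeness of $M$, so it goes through verbatim here.) Thus $M$ is the unique maximal ideal of $R$.

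Now that $R$ is quasilocal with maximal ideal $M$, and $M$ is a prime $n$-powerful ideal, Theorem~\ref{S4T1} upgrades $M$ to an $n$-powerful semiprimary ideal of $R$. Finally, let $P$ be an arbitrary prime ideal of $R$; then $P \subseteq M$, and since $M$ is an $n$-powerful semiprimary ideal, Theorem~\ref{S4T3} gives that $P$ is an $n$-powerful semiprimary ideal of $R$. Hence every prime ideal of $R$ is $n$-powerful semiprimary, i.e., $R$ is an $n$-PVD. This closes the cycle. I expect no real obstacle here — the corollary is genuinely a corollary, assembled from Theorems~\ref{S4T1}, \ref{S4T3}, and the quasilocality argument from Theorem~\ref{S4T2}, exactly as the sentence preceding the statement advertises; the only point requiring a moment's care is checking that the quasilocality argument needs only the $n$-powerful hypothesis (not $n$-powerful semiprimary) on $M$.
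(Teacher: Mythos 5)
Your overall architecture is the intended one (it is exactly the assembly the paper's one‑line proof points to: Theorem~\ref{S4T1}, Theorem~\ref{S4T3}, and the quasilocality argument from the proof of Theorem~\ref{S4T2}), and the implications ``$n$-PVD $\Rightarrow$ some maximal ideal $n$-powerful semiprimary $\Rightarrow$ some maximal ideal $n$-powerful'' are fine. But there is a genuine gap in your quasilocality step. You claim that the proof of Theorem~\ref{S4T2} ``goes through verbatim'' using only the hypothesis that $M$ is $n$-powerful, concluding $(x/y)^n \in R$ because $(y^2)^n \notin M$. That inference is not available: the defining condition of an $n$-powerful ideal says that $u^n v^n \in M$ for $u,v \in K$ forces $u^n \in R$ \emph{or} $v^n \in R$, and in your application the second disjunct is satisfied vacuously, since $(y^2)^n = y^{2n} \in R$ automatically (because $y \in R$). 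The fact that $(y^2)^n \notin M$ is irrelevant to the $n$-powerful condition, whose conclusion concerns membership in $R$, not in $M$; so from $(x/y)^n(y^2)^n \in M$ and $M$ merely $n$-powerful you learn nothing about $(x/y)^n$, and the contradiction does not materialize.

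The repair is simply to reorder your steps, which then reproduces the paper's argument. Since $M$ is a prime ideal, Theorem~\ref{S4T1} upgrades ``$M$ is $n$-powerful'' to ``$M$ is $n$-powerful semiprimary'' \emph{before} any quasilocality considerations; with the semiprimary form of the hypothesis, the computation in the proof of Theorem~\ref{S4T2} does work as written, because there the conclusion of the defining condition is ``$(x/y)^n \in M$ or $(y^2)^n \in M$,'' and now $(y^2)^n \notin M$ (as $y \notin M$ and $M$ is prime) genuinely forces $(x/y)^n \in M \subseteq R$, whence $x^n = (x/y)^n y^n \in N$ and $x \in N$, a contradiction. (Note that this argument only needs the one distinguished maximal ideal $M$ to be $n$-powerful semiprimary, so it applies in the setting of the corollary.) Having established that $R$ is quasilocal with maximal ideal $M$, your final step via Theorem~\ref{S4T3} — every prime $P \subseteq M$ is then $n$-powerful semiprimary — is correct and completes the cycle.
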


Recall (\cite{Do}, \cite {B?})  that a prime ideal $P$ of a commutative ring $R$ is a {\it divided prime ideal}  of $R$ if $x \, | \,  p$ (in $R$) for every $x \in R \setminus P$ and $p \in P$ (i.e., $(x)$ is comparable to $P$ for every $x \in R$), and $R$ is a {\it divided ring} if every prime ideal of $R$ is divided. We next give the ``n'' generalization.

\begin{dfn}
{\rm Let $R$ be a commutative ring and $n$  a positive integer. Then a prime ideal $P$ of $R$ is an {\it $n$-divided prime ideal} of $R$ if $x^n \, | \, p^n$ (in $R$) for every $x \in R\setminus P$ and $p \in P$. Moreover,  $R$ is an {\it $n$-divided ring} if every prime ideal of $R$ is an $n$-divided prime ideal of $R$.}
\end{dfn}

A $1$-divided prime ideal (resp., ring) is just a divided prime ideal (resp., ring), and an $n$-divided prime ideal is $mn$-divided for every positive integer $m$. Thus an $n$-divided ring is $mn$-divided for every positive integer $m$. 

The next several results show that $n$-divided rings behave very much like divided rings (cf. \cite{B?}, \cite{Do}).

\begin{thm} \label{S4T3.5}
Let $R$ be an $n$-divided commutative ring. Then the set of prime ideals of $R$ is linearly orderd by inclusion. In particular, $R$ is quasilocal.
\end{thm}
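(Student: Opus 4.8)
The plan is to prove that the prime ideals of an $n$-divided ring $R$ are linearly ordered by showing that any two prime ideals $P$ and $Q$ are comparable. Suppose for contradiction that neither $P \subseteq Q$ nor $Q \subseteq P$. Then we can pick $p \in P \setminus Q$ and $q \in Q \setminus P$. The idea is to play the two $n$-divided conditions against each other.

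First I would apply the $n$-divided condition at the prime $Q$: since $p \in R \setminus Q$ and $q \in Q$, we get $p^n \mid q^n$ in $R$, say $q^n = p^n r$ for some $r \in R$. Then I would apply the $n$-divided condition at the prime $P$: since $q \in R \setminus P$ and $p \in P$, we get $q^n \mid p^n$ in $R$, say $p^n = q^n s$ for some $s \in R$. Combining, $q^n = p^n r = q^n s r$, so $q^n(1 - sr) = 0$; since $R$ is a domain? — wait, $R$ is only assumed commutative here, not a domain. So instead I should argue directly: from $p^n = q^n s$ and $q \in Q$, we get $p^n \in Q$ (as $Q$ is an ideal), hence $p \in Q$ since $Q$ is prime, contradicting $p \notin Q$. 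That is the cleaner route: just use one of the two divisibility relations together with primeness.

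So the core argument is short: assuming $P \not\subseteq Q$ and picking $p \in P \setminus Q$, the $n$-divided condition at $P$ applied with any $q \in Q \setminus P$ — provided such $q$ exists, i.e. $Q \not\subseteq P$ — gives $q^n \mid p^n$, whence $p^n \in Q$ and so $p \in Q$, a contradiction. Thus $P \subseteq Q$ or $Q \subseteq P$, so the primes are linearly ordered. The ``in particular'' statement then follows because a commutative ring whose prime ideals form a chain has a unique maximal ideal (the largest prime), hence is quasilocal.

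The main obstacle, such as it is, is simply making sure the logic of the contradiction is set up so that one only needs a single divisibility relation and primeness, rather than something that would require $R$ to be a domain. There is essentially no computation: the whole proof is the observation that $x^n \mid p^n$ with $x \notin P'$ forces $p^n$ into the ideal containing $x^n$ when $x \in$ that ideal. I would write it as: let $P, Q$ be prime ideals of $R$ with $Q \not\subseteq P$, pick $q \in Q \setminus P$; for any $p \in P$, since $q \notin P$ and $p \in P$ and $R$ is $n$-divided, $q^n \mid p^n$, so $p^n \in (q^n) \subseteq Q$, hence $p \in Q$; therefore $P \subseteq Q$. This shows any two primes are comparable, so $\mathrm{Spec}(R)$ is a chain and $R$ is quasilocal.
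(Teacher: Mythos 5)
Your proof is correct and is essentially the paper's own argument, just with the roles of $P$ and $Q$ interchanged: you invoke the $n$-divided condition at one prime to get $q^n \mid p^n$ and then use primeness of the other prime to conclude containment, exactly as the paper does. The deduction of quasilocality from linear ordering of primes is also handled the same way.
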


\begin{proof}
Let $P$ and $Q$ be prime ideals of an $n$-divided commutative ring $R$ with $P \not \subseteq Q$. We show that $Q \subseteq P$. Let $x \in P \setminus Q$; then $x^n \, | \,  q^n$ for every $q \in Q$ since $Q$ is a divided prime ideal of $R$. Thus $q^n \in (x^n) \subseteq P$; so $q \in P$ for every $q \in Q$. Hence $Q \subseteq P$.
\end{proof} 

\begin{thm}\label{S4T4}
Let $P$ a prime ideal of an integral domain $R$. If $P$ is an $n$-powerful semiprimary ideal of $R$, then $P$ is an $n$-divided prime ideal of $R$. Moreover, the set of prime ideals of $R$ that are contained in $P$ is linearly ordered by inclusion.
\end{thm}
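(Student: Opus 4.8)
The plan is to exploit the quotient-field flexibility built into the definition of an $n$-powerful semiprimary ideal by feeding it a single well-chosen pair of fractions, and then to bootstrap the divided conclusion to \emph{all} primes below $P$ using Theorem~\ref{S4T3}.

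For the first assertion, fix $x \in R \setminus P$ and $p \in P$, and work in the quotient field $K$. The key observation is that $(p/x)^n \cdot x^n = p^n \in P$, so applying the $n$-powerful semiprimary hypothesis to the pair $p/x,\, x \in K$ gives $(p/x)^n \in P$ or $x^n \in P$. The second alternative is impossible: $P$ is prime and $x \notin P$. Hence $(p/x)^n = p^n/x^n \in P \subseteq R$, which says precisely that $x^n \mid p^n$ in $R$. Thus $P$ is an $n$-divided prime ideal of $R$.

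For the ``moreover'' statement, first note that every prime ideal $Q$ of $R$ with $Q \subseteq P$ is again $n$-powerful semiprimary by Theorem~\ref{S4T3}, and hence $n$-divided by the part just proved. Now let $Q_1, Q_2$ be prime ideals of $R$ contained in $P$ with $Q_1 \not\subseteq Q_2$; I claim $Q_2 \subseteq Q_1$. Pick $x \in Q_1 \setminus Q_2$. Since $Q_2$ is $n$-divided and $x \notin Q_2$, for every $q \in Q_2$ we have $x^n \mid q^n$ in $R$, so $q^n \in (x^n) \subseteq Q_1$; as $Q_1$ is prime, $q \in Q_1$. Therefore $Q_2 \subseteq Q_1$, so the set of primes contained in $P$ is linearly ordered by inclusion, exactly as in the argument for Theorem~\ref{S4T3.5}.

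There is no serious obstacle here; the only points needing care are the choice of the fractions $p/x$ and $x$ — writing $p^n$ as an $n$-th power product so that the ``wrong'' alternative $x^n \in P$ collapses via primeness of $P$ — and the use of Theorem~\ref{S4T3} to ensure the first part applies to every prime below $P$, not merely to $P$ itself.
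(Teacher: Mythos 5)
Your proposal is correct and follows essentially the same route as the paper: apply the $n$-powerful semiprimary condition to the pair $p/x, x \in K$ with $(p/x)^n x^n = p^n \in P$, rule out $x^n \in P$ by primeness, and then use Theorem~\ref{S4T3} together with the comparability argument of Theorem~\ref{S4T3.5} for the ``moreover'' part. The only difference is that you write out the linear-ordering argument explicitly where the paper simply cites the proof of Theorem~\ref{S4T3.5}.
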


\begin{proof}
Let $x \in R\setminus P$ and $p \in P$. Then $(p/x)^nx^n = (p^n/x^n)x^n = p^n \in P$. Thus $p^n/x^n \in P$ since $x^n \not \in P$ and $P$ is an $n$-powerful semiprimary ideal of $R$. Hence $p^n = (p^n/x^n)x^n$; so $x^n \, | \, p^n$ (in $R$). Thus $P$ is an $n$-divided prime ideal of $R$. Now suppose that $F$ and $H$ are distinct prime ideals of $R$ contained in $P$. Then $F$ and $H$ are $n$-powerful semiprimary ideals of $R$ by Theorem \ref{S4T3}, and hence are $n$-divided prime ideals. The proof of Theorem~\ref{S4T3.5} shows that $F$ and $H$ are comparable under inclusion.
\end{proof}

\begin{cor}\label{S4C2}
Let $R$ be an $n$-PVD. Then $R$ is an $n$-divided domain and the set of prime ideals of $R$ is linearly ordered by inclusion. Moreover, if $R$ is Noetherian, then $dim(R) \leq 1$.
\end{cor}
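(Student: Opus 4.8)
The plan is to read off the first two assertions directly from the machinery already in place, and then to prove the dimension bound by a short Krull--intersection argument.

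Since $R$ is an $n$-PVD, every prime ideal of $R$ is an $n$-powerful semiprimary ideal of $R$, hence an $n$-divided prime ideal of $R$ by Theorem~\ref{S4T4}; therefore $R$ is an $n$-divided domain. Theorem~\ref{S4T3.5} then immediately yields that the prime ideals of $R$ are linearly ordered by inclusion and that $R$ is quasilocal (the latter also being Theorem~\ref{S4T2}). This settles everything but the last assertion.

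For the ``moreover'' part, suppose $R$ is Noetherian and, seeking a contradiction, that $dim(R) \geq 2$. Let $M$ denote the maximal ideal of $R$. Since $dim(R) \geq 2$ there is a chain $Q_0 \subsetneq Q_1 \subsetneq Q_2$ of prime ideals of $R$, and as $R$ is quasilocal $Q_2 \subseteq M$; put $P = Q_1$, so that $\{0\} \subsetneq P \subsetneq M$ ($P \neq \{0\}$ because $Q_0 \subsetneq Q_1$, and $P \subsetneq M$ because $Q_1 \subsetneq Q_2 \subseteq M$). Fix $a \in P$ with $a \neq 0$. For each positive integer $k$ we have $M^k \not\subseteq P$ (otherwise $M \subseteq P$, contradicting $P \subsetneq M$), so choose $b_k \in M^k \setminus P$. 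Since $P$ is an $n$-divided prime ideal, $b_k \in R \setminus P$, and $a \in P$, we get $b_k^n \mid a^n$ in $R$; hence $a^n \in b_k^n R \subseteq M^{kn} \subseteq M^k$. As $k$ was arbitrary, $a^n \in \bigcap_{k \geq 1} M^k = \{0\}$ by the Krull Intersection Theorem (applicable because $R$ is a Noetherian quasilocal domain; see \cite{K}). Thus $a^n = 0$, so $a = 0$, contradicting the choice of $a$. Hence $dim(R) \leq 1$.

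I expect the first two statements to be entirely formal, so the only real content lies in the last paragraph; within it the one substantive point is that the $n$-dividedness of $P$ forces $a^n$ into $b_k^n R$ for arbitrarily deep $b_k \in M$, after which the standard Krull intersection fact finishes the proof. There is essentially no obstacle beyond correctly locating a prime $P$ strictly between $\{0\}$ and $M$ and checking the (routine) containments $b_k^n R \subseteq M^{kn} \subseteq M^k$.
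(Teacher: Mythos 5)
Your argument is correct, and the first two assertions are handled exactly as the paper intends (they are immediate from Theorem~\ref{S4T4} and Theorem~\ref{S4T3.5}). Where you genuinely diverge is the ``moreover'' statement: the paper disposes of it in one line by citing \cite[Theorem 144]{K} (in a Noetherian ring there are infinitely many primes strictly between two comparable primes that admit one prime strictly between them; combined with the linear ordering of primes this would produce an infinite chain inside a prime of finite height, which is impossible), whereas you give a direct, self-contained argument: if $\{0\} \subsetneq P \subsetneq M$ with $P$ prime, then $n$-dividedness of $P$ forces $a^n \in b_k^nR \subseteq M^k$ for every $k$ and every $0 \neq a \in P$, so $a^n \in \bigcap_k M^k = \{0\}$ by the Krull intersection theorem, a contradiction. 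Both routes use the Noetherian hypothesis essentially; the paper's is shorter but leans on a less elementary prime-avoidance-type result, while yours trades that for the standard Krull intersection theorem and actually exploits the $n$-divided structure of the primes, which makes the mechanism behind the dimension bound more transparent. All the small steps you need ($M^k \not\subseteq P$ since $P$ is prime and $P \subsetneq M$, $b_k^nR \subseteq M^{kn} \subseteq M^k$, and quasilocality from Theorem~\ref{S4T3.5} or Theorem~\ref{S4T2}) check out, so there is no gap.
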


\begin{proof}
We need only prove the ``moreover'' statement; it follows directly from \cite[Theorem 144]{K}.
\end{proof}

Let $R$ be an integral domain with quotient field $K$, $S \subseteq R$, and $n$ a positive integer. Define $E_n(S) = \{ x  \mid x^n \not \in S, x \in K \}$ and $A_n(S) = \{ x^n  \mid x^n \in S, x \in K \}$. We next use these two sets to give another characterization of $n$-powerful semiprimary ideals. Note that actually $x^{-n}d \in A_n(P)$ in Theorem~\ref{S4T5} and Corollary~\ref{S4C3}(4), and $x^{-n}d \in A_n(M)$ in Corollary~\ref{S4C4}(3).

\begin{thm} \label{S4T5}
Let $P$  a prime ideal of an integral domain $R$ with quotient field $K$. Then $P$ is an $n$-powerful semiprimary ideal of $R$ if and only if $x^{-n}d \in P$ for every $x \in E_n(P)$ and $d \in A_n(P)$.
\end{thm}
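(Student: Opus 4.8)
The plan is to verify both implications by directly unwinding the definitions of $E_n(P)$, $A_n(P)$, and ``$n$-powerful semiprimary'', after disposing of the degenerate role of $0$ at the outset. First I would record two harmless-but-necessary points: since $0^n = 0 \in P$, every $x \in E_n(P)$ is nonzero, so that $x^{-n}$ is meaningful; and $0 = 0^n \in A_n(P)$, but $x^{-n} \cdot 0 = 0 \in P$, so this value of $d$ never causes trouble.

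For the forward implication, I would assume $P$ is an $n$-powerful semiprimary ideal and fix $x \in E_n(P)$ and $d \in A_n(P)$. Writing $d = y^n$ with $y \in K$ and $y^n \in P$ (possible by the definition of $A_n(P)$), I have $x^{-n}d = (y/x)^n$ and $x^n \cdot (y/x)^n = y^n = d \in P$, with both $x$ and $y/x$ in $K$. Since $x^n \notin P$ (because $x \in E_n(P)$) and $P$ is $n$-powerful semiprimary, this forces $(y/x)^n = x^{-n}d \in P$, as required.

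For the converse, I would assume the stated divisibility condition and take $a, b \in K$ with $a^n b^n \in P$. If $a = 0$ or $b = 0$ there is nothing to prove, and if $a^n \in P$ we are done, so assume $a, b \ne 0$ and $a^n \notin P$; then $a \in E_n(P)$. Setting $d := (ab)^n = a^n b^n$, I see that $d$ is the $n$-th power of $ab \in K$ and $d \in P$, so $d \in A_n(P)$. The hypothesis then gives $a^{-n}d = a^{-n}a^n b^n = b^n \in P$, which is exactly what is needed for $P$ to be $n$-powerful semiprimary.

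I do not anticipate any genuine obstacle here: each step is a single algebraic manipulation in $K$, and the primeness of $P$ is not even invoked. The one place to be attentive is precisely the bookkeeping about $0$ --- confirming that $x \in E_n(P)$ guarantees $x \ne 0$ so that $x^{-n}$ is defined, and that permitting $d = 0$ (or $a = 0$ or $b = 0$) leaves both directions intact.
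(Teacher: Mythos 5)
Your proof is correct and follows essentially the same route as the paper's: in one direction you factor $x^{-n}d$ as $(y/x)^n$ with $x^n\,(y/x)^n = d \in P$ and apply the defining property of $P$, and in the other you take $d = (ab)^n \in A_n(P)$ and use the hypothesis to conclude $b^n = a^{-n}d \in P$, exactly as in the paper. The only difference is your explicit (and harmless) bookkeeping about the element $0$, which the paper passes over silently.
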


\begin{proof}
Suppose that $x^{-n}d \in P$ for every $x \in E_n(P)$ and  $d \in A_n(P)$. Let $x^ny^n \in P$ for $x, y \in K$ with $x^n \not \in P$; so $x\in E_n(P)$. Since $x^ny^n = (xy)^n  \in A_n(P)$, we have $y^n = x^{-n}(x^ny^n) \in P$. Thus $P$ is an $n$-powerful semiprimary ideal of $R$.

 Conversely, suppose that $P$ is an $n$-powerful semiprimary ideal of $R$. Let $d\in A_n(P)$; so $d = a^n \in P$ for some $a\in K$ and $ x^n(x^{-1}a)^n = x^nx^{-n}a^n =a^n \in P$ for every $0 \neq x \in K$. Suppose that $x\in E_n(P)$. Then $x^n \not \in P$; so $(x^{-1}a)^n \in P$  since $P$ is an $n$-powerful semiprimary ideal of $R$. Thus $x^{-n}d =   x^{-n}a^n = (x^{-1}a)^n \in P$.
\end{proof}

The proof of the following result is similar to that  of Theorem \ref{S4T5}, and thus will be omitted.

\begin{thm}\label{S4T6}
Let $I$  a proper ideal of an integral domain $R$. Then $I$ is an $n$-powerful ideal of $R$ if and only if $x^{-n}d \in R$ for every $x \in E_n(R)$ and $d \in A_n(I)$.
\end{thm}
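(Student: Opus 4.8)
The plan is to mimic, almost verbatim, the proof of Theorem~\ref{S4T5}, with the single change that the ``target'' of the condition is $R$ rather than the prime ideal $P$, while the witnesses $d$ are still drawn from $A_n(I)$. Both implications will come straight from the definitions of $E_n$, $A_n$, and of an $n$-powerful ideal, so no auxiliary results are really needed.

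For the ``if'' direction I would assume $x^{-n}d \in R$ for every $x \in E_n(R)$ and $d \in A_n(I)$, and take $x^ny^n \in I$ with $x, y \in K$. If $x^n \in R$ there is nothing to prove, so I may assume $x^n \notin R$, i.e., $x \in E_n(R)$ (note this forces $x \neq 0$, since $0^n = 0 \in R$, so $x^{-n}$ is legitimate). Because $x^ny^n = (xy)^n \in I$, the element $(xy)^n$ lies in $A_n(I)$; applying the hypothesis with $d = (xy)^n$ gives $y^n = x^{-n}(xy)^n = x^{-n}d \in R$. Hence $I$ is an $n$-powerful ideal of $R$.

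For the ``only if'' direction I would assume $I$ is $n$-powerful, fix $d \in A_n(I)$ — so $d = a^n \in I$ for some $a \in K$ — and fix $x \in E_n(R)$, so $x \neq 0$ and $x^n \notin R$. Then $x^n(x^{-1}a)^n = x^n x^{-n} a^n = a^n = d \in I$, and since $x^n \notin R$ and $I$ is $n$-powerful, we must have $(x^{-1}a)^n \in R$; that is, $x^{-n}d = x^{-n}a^n = (x^{-1}a)^n \in R$, as desired. There is essentially no obstacle: the only thing to keep track of is that membership in $E_n(R)$ guarantees $x \neq 0$ so that inverses exist, and that $A_n(I)$ is by definition exactly the set of $n$-th powers of elements of $K$ that land in $I$, which is precisely the form needed to produce the witness in each direction. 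This is why the authors can safely omit the argument.
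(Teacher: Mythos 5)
Your proof is correct and is exactly the argument the paper intends: the authors omit the proof of this theorem precisely because it is the proof of Theorem~\ref{S4T5} with $P$ replaced by $R$ as the target and $A_n(I)$ as the source of witnesses, which is what you carried out. Both directions check out, including the observation that $x \in E_n(R)$ forces $x \neq 0$ so that $x^{-n}$ exists.
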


In view of Theorem \ref{S4T1}, Theorem \ref{S4T5}, and Theorem \ref{S4T6}, we have the following result.

\begin{cor}\label{S4C3}
Let $P$ be a prime ideal of an integral domain $R$. Then the following statements are equivalent.
\begin{enumerate}
\item $P$ is an $n$-powerful semiprimary ideal of $R$.
\item $P$ is an $n$-powerful ideal of $R$.
\item $x^{-n}d \in R$ for every $x \in E_n(R)$ and $d \in A_n(P)$.
\item $x^{-n}d \in P$ for every $x \in E_n(P)$ and $d \in A_n(P)$.
\end{enumerate}
\end{cor}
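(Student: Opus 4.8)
The plan is to establish the cycle of implications $(1)\Rightarrow(2)\Rightarrow(3)\Rightarrow(4)\Rightarrow(1)$ by assembling results already in hand rather than arguing from scratch. The implication $(1)\Leftrightarrow(2)$ is exactly Theorem~\ref{S4T1}, so that part is free. The implication $(4)\Rightarrow(1)$ is one direction of Theorem~\ref{S4T5} (namely, the statement that $x^{-n}d\in P$ for all $x\in E_n(P)$, $d\in A_n(P)$ implies $P$ is $n$-powerful semiprimary), so that is also free. Likewise $(2)\Rightarrow(3)$ is one direction of Theorem~\ref{S4T6} applied with $I=P$: if $P$ is $n$-powerful, then $x^{-n}d\in R$ for every $x\in E_n(R)$ and $d\in A_n(P)$. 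Thus the only genuine work is $(3)\Rightarrow(4)$, i.e., upgrading the conclusion ``$x^{-n}d\in R$ for $x\in E_n(R)$'' to the stronger ``$x^{-n}d\in P$ for $x\in E_n(P)$.''

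For $(3)\Rightarrow(4)$ I would argue as follows. Assume (3) holds. First note that (3) already gives, via Theorem~\ref{S4T6}, that $P$ is an $n$-powerful ideal of $R$; hence by Theorem~\ref{S4T1} it is an $n$-powerful semiprimary ideal, which is (1). But then Theorem~\ref{S4T5} immediately yields (4). So in fact the whole corollary collapses to: $(1)\Leftrightarrow(2)$ by Theorem~\ref{S4T1}; $(2)\Leftrightarrow(4)$ by Theorem~\ref{S4T5} together with $(1)\Leftrightarrow(2)$; and $(2)\Leftrightarrow(3)$ by Theorem~\ref{S4T6} together with $(1)\Leftrightarrow(2)$. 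Concretely, I would write: ``$(1)\Leftrightarrow(2)$ is Theorem~\ref{S4T1}. $(1)\Leftrightarrow(4)$ is Theorem~\ref{S4T5}. For $(2)\Leftrightarrow(3)$: $(1)$ holds iff $P$ is an $n$-powerful ideal of $R$ (Theorem~\ref{S4T1}), which by Theorem~\ref{S4T6} (with $I=P$) holds iff $x^{-n}d\in R$ for every $x\in E_n(R)$ and $d\in A_n(P)$, i.e., iff $(3)$ holds.''

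The one subtlety to double-check is that Theorem~\ref{S4T6} applies with $I = P$ a \emph{prime} ideal — but a prime ideal is a proper ideal, so this is fine, and $A_n(P)$ is exactly the set $\{x^n \mid x^n\in P,\ x\in K\}$ in the notation already set up before Theorem~\ref{S4T5}. There is no real obstacle here; the proof is a short citation-chaining argument, and the ``hard part'' — genuinely relating $E_n(R)$-quantification to $E_n(P)$-quantification for a prime ideal — has already been absorbed into Theorem~\ref{S4T1} and its use in proving Theorems~\ref{S4T5} and~\ref{S4T6}. I would therefore keep the proof to two or three sentences, simply pointing to Theorems~\ref{S4T1}, \ref{S4T5}, and \ref{S4T6}, exactly as the paragraph preceding the corollary statement already signals (``In view of Theorem \ref{S4T1}, Theorem \ref{S4T5}, and Theorem \ref{S4T6}'').

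\begin{proof}
The equivalence $(1)\Leftrightarrow(2)$ is Theorem~\ref{S4T1}, and the equivalence $(1)\Leftrightarrow(4)$ is Theorem~\ref{S4T5}. It remains to show $(2)\Leftrightarrow(3)$. By Theorem~\ref{S4T6} applied to the proper ideal $I = P$, the ideal $P$ is an $n$-powerful ideal of $R$ if and only if $x^{-n}d\in R$ for every $x\in E_n(R)$ and $d\in A_n(P)$. Thus $(2)\Leftrightarrow(3)$, and the four statements are equivalent.
\end{proof}
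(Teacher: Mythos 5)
Your proof is correct and follows the paper's own route exactly: the paper gives no separate argument, deriving the corollary directly from Theorem~\ref{S4T1} for $(1)\Leftrightarrow(2)$, Theorem~\ref{S4T5} for $(1)\Leftrightarrow(4)$, and Theorem~\ref{S4T6} (with $I=P$) for $(2)\Leftrightarrow(3)$, just as you do.
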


In view of Corollary \ref{S4C1}, Theorem \ref{S4T5}, and Theorem \ref{S4T6}, we have the following result.

\begin{cor}\label{S4C4}
Let $R$ be a quasilocal integral domain with maximal ideal $M$. Then the following statements are equivalent.
\begin{enumerate}
\item $R$ is an $n$-PVD.
\item $x^{-n}d \in R$ for every $x \in E_n(R)$ and $d \in A_n(M)$.
\item $x^{-n}d \in M$ for every $x \in E_n(M)$ and $d \in A_n(M)$.
\end{enumerate}
\end{cor}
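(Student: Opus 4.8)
The plan is to reduce all three conditions to statements about the single maximal ideal $M$ and then invoke Theorems~\ref{S4T5} and~\ref{S4T6}. Since $R$ is quasilocal, $M$ is its unique maximal ideal, so Corollary~\ref{S4C1} specializes to the statement that $R$ is an $n$-PVD if and only if $M$ is an $n$-powerful semiprimary ideal of $R$, and also if and only if $M$ is an $n$-powerful ideal of $R$. Thus it suffices to identify condition (3) with ``$M$ is an $n$-powerful semiprimary ideal'' and condition (2) with ``$M$ is an $n$-powerful ideal''.

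For (1) $\Leftrightarrow$ (3): I would apply Theorem~\ref{S4T5} with the prime ideal $P = M$. That theorem gives that $M$ is an $n$-powerful semiprimary ideal of $R$ if and only if $x^{-n}d \in M$ for every $x \in E_n(M)$ and $d \in A_n(M)$, which is exactly (3). Combining with Corollary~\ref{S4C1} yields (1) $\Leftrightarrow$ (3). For (1) $\Leftrightarrow$ (2): I would apply Theorem~\ref{S4T6} with the proper ideal $I = M$, obtaining that $M$ is an $n$-powerful ideal of $R$ if and only if $x^{-n}d \in R$ for every $x \in E_n(R)$ and $d \in A_n(M)$, which is exactly (2); again Corollary~\ref{S4C1} closes the loop to give (1) $\Leftrightarrow$ (2).

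The argument is essentially bookkeeping: the only points requiring care are using quasilocality to replace ``some maximal ideal'' in Corollary~\ref{S4C1} by the specified ideal $M$, and checking that the hypotheses of the cited theorems are met (Theorem~\ref{S4T5} needs $P$ prime, and $M$ is prime; Theorem~\ref{S4T6} needs $I$ proper, and $M$ is proper). I do not anticipate any substantive obstacle. One may additionally note, as remarked before the statement, that the membership $x^{-n}d \in M$ in (3) can be sharpened to $x^{-n}d \in A_n(M)$, but this refinement is not needed for the equivalences.
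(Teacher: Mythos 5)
Your proposal is correct and matches the paper's own argument, which the authors give by the single sentence ``In view of Corollary~\ref{S4C1}, Theorem~\ref{S4T5}, and Theorem~\ref{S4T6}, we have the following result,'' i.e., exactly your reduction of (3) to ``$M$ is $n$-powerful semiprimary'' and of (2) to ``$M$ is $n$-powerful,'' with quasilocality identifying the maximal ideal of Corollary~\ref{S4C1} with $M$. Nothing further is needed.
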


If $R$ is a PVD, then $R/P$ is also a PVD for $P$ a prime ideal of $R$ \cite[Lemma 4.5(i)]{Do2}. The analogous result holds for $n$-PVDs.

\begin{thm}
Let $P$ be a prime ideal of an $n$-PVD $R$. Then $R/P$ is an $n$-PVD.
\end{thm}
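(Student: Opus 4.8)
The plan is to verify that every prime ideal of $R/P$ is an $n$-powerful semiprimary ideal of $R/P$ by pulling back to prime ideals of $R$ that contain $P$. By Corollary~\ref{S4C1} it actually suffices to check a single maximal ideal, but the cleanest route is to work with an arbitrary prime ideal of $R/P$ directly. So first I would fix a prime ideal $\overline{Q}$ of $R/P$; it has the form $Q/P$ for a prime ideal $Q$ of $R$ with $P \subseteq Q$, and since $R$ is an $n$-PVD, $Q$ is an $n$-powerful semiprimary ideal of $R$. The goal is then to transfer the $n$-powerful semiprimary property from $Q$ in $R$ to $Q/P$ in $R/P$.

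The key observation is that, because $R$ is an $n$-PVD, Corollary~\ref{S4C2} tells us $R$ is an $n$-divided domain; in particular $P$ is an $n$-divided prime ideal of $R$, and the set of primes of $R$ is linearly ordered. The quotient field of $R/P$ is $L = R_P/PR_P$, and I would identify it concretely: an element of $L$ can be written as $\overline{a/s}$ with $a \in R$ and $s \in R \setminus P$. The main step is to show that whenever $\overline{\xi}^{\,n}\overline{\eta}^{\,n} \in Q/P$ for $\overline{\xi}, \overline{\eta} \in L$, we can lift $\overline{\xi}$ and $\overline{\eta}$ to elements $u, v \in K$ (the quotient field of $R$) so that $u^n v^n \in Q$, and so that $u^n \in Q$ (resp.\ $u^n \in R$) forces $\overline{\xi}^{\,n} \in Q/P$ (resp.\ $\overline{\xi}^{\,n} \in R/P$), and symmetrically for $\eta$. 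Writing $\overline{\xi} = \overline{a/s}$ and $\overline{\eta} = \overline{b/t}$ with $a,b \in R$ and $s,t \in R \setminus P$, the hypothesis says $a^n b^n \in Q + (\text{something in } P)$ after clearing the units $s,t$, i.e.\ $a^n b^n - q \in P$ for some $q \in Q$, hence $a^n b^n \in Q$ since $P \subseteq Q$. Now apply the $n$-powerful semiprimary property of $Q$ in $R$ to $u = a$, $v = b$: either $a^n \in Q$ or $b^n \in Q$, and then $\overline{\xi}^{\,n} = \overline{a^n/s^n} \in Q/P$ or $\overline{\eta}^{\,n} \in Q/P$, as desired.

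The subtlety I expect to be the real obstacle is handling the case where the lifted $\overline{\xi}$ or $\overline{\eta}$ only lives in $L$ but not in the image of $R$ — that is, when the natural lift has a denominator $s \in R \setminus P$. Here is where $n$-dividedness of $P$ earns its keep: if $\overline{\xi} \in L \setminus (R/P)$, then $\overline{\xi} = \overline{a/s}$ with $a \notin (s) + P$; but since the primes of $R$ are linearly ordered and $P$ is $n$-divided, one checks that $a/s$ can be taken in $K$ and that membership $\overline{\xi}^{\,n} \in Q/P$ is equivalent to $(a/s)^n \in Q$ — using that $s \in R \setminus P \subseteq R \setminus Q$ and $Q$ is $n$-divided (being $n$-powerful semiprimary, by Theorem~\ref{S4T4}), so $s^n$ divides $p^n$ for all $p \in Q$ and thus $a^n/s^n \in Q \iff a^n \in Q$. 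With this equivalence in hand, the argument of the previous paragraph goes through verbatim with $u = a/s$, $v = b/t \in K$: from $u^n v^n \in Q$ we get $u^n \in Q$ or $v^n \in Q$, hence $\overline{\xi}^{\,n} \in Q/P$ or $\overline{\eta}^{\,n} \in Q/P$.

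Finally I would assemble these pieces: since $\overline{Q} = Q/P$ was an arbitrary prime ideal of $R/P$ and we have shown each is $n$-powerful semiprimary, $R/P$ is an $n$-PVD. Alternatively, and perhaps more economically for the write-up, I would note that $R$ is quasilocal with maximal ideal $M$ (Theorem~\ref{S4T2}), so $M/P$ is the maximal ideal of $R/P$, and by Corollary~\ref{S4C1} it is enough to verify that $M/P$ is an $n$-powerful semiprimary ideal of $R/P$; then only the single case $Q = M$ of the above needs to be carried out. Either way, the heart of the proof is the denominator-clearing equivalence enabled by $n$-dividedness, and everything else is routine.
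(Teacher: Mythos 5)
Your overall strategy (verify directly that $Q/P$, or just $M/P$, is $n$-powerful semiprimary in $R/P$ by lifting elements of $L$ to $K$) can be made to work, but the step you single out as ``the real obstacle'' is exactly where your justification breaks. The inclusion ``$s \in R \setminus P \subseteq R \setminus Q$'' is backwards: since $P \subseteq Q$, an element outside $P$ may perfectly well lie in $Q$, and in the decisive case it always does. Indeed, after reducing to $Q = M$ via Corollary~\ref{S4C1}, if $\overline{\xi} \in L \setminus (R/P)$ then every representation $\overline{\xi} = \overline{a}/\overline{s}$ has $\overline{s}$ a nonunit of $R/P$, i.e.\ $s \in M \setminus P = Q \setminus P$. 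So the equivalence ``$a^n/s^n \in Q \iff a^n \in Q$'', which you justify from $s \notin Q$ and $n$-dividedness of $Q$, is unavailable precisely where you need it, and the inference you draw from it is false in general: take $a = s \in Q \setminus P$, so $a^n \in Q$ but $\overline{\xi} = 1$ and $\overline{\xi}^{\,n} \notin Q/P$. For the same reason you never actually obtain ``$u^nv^n \in Q$'' for $u = a/s$, $v = b/t$: what your second paragraph correctly gives is $a^nb^n \in Q$ (multiplying by $\overline{s}^{\,n}\overline{t}^{\,n}$ inside $R/P$), and passing from that to $a^nb^n/(st)^n \in Q$ again needs $st \notin Q$. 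The hypothesis $\overline{\xi}^{\,n}\overline{\eta}^{\,n} \in Q/P$ only yields $u^nv^n \in Q + PR_P$, and $PR_P \not\subseteq R$ in general, so membership in $Q$ at the level of $K$ is genuinely not free.

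The missing ingredient is a bridge between $PR_P$ (equivalently, between $n$-th powers from $L$ and $n$-th powers from $K$) and $R$. One correct completion of your route, for $Q = M$: every $z \in PR_P$ satisfies $z^n \in P$ (write $z = p/w$ with $p \in P$, $w \notin P$; $P$ is $n$-divided by Theorem~\ref{S4T4}, so $z^n = p^n/w^n \in R$, and then $z^n \in P$ by primality) -- this is where $n$-dividedness really earns its keep, not for the denominator of $Q$. Hence $u^nv^n \in M + PR_P \subseteq \{x \in K \mid x^n \in M\} = \sqrt{M\overline{R}}$, which is an ideal of $\overline{R}$ by Corollary~\ref{S4T13}; the additive closure supplied there is what your denominator-clearing silently uses. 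Then $(uv)^{n^2} \in M$, so $(uv)^n \in M$ by Theorem~\ref{S4T11}, and now the $n$-powerful semiprimary property of $M$ applied to $u, v \in K$ descends to $L$ and finishes via Corollary~\ref{S4C1}. The paper sidesteps elements of $L$ outside $R/P$ altogether by instead using the divisibility criterion of Corollary~\ref{S4C4} (for $a, b \in R$ with $a^n \nmid b^n$ one gets $b^n \mid a^n d$ for all $d \in A_n(M)$, then reduces modulo $P$), but either way some such lemma is required; your write-up lacks it, and the argument offered in its place is incorrect.
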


\begin{proof}
Let $M$ be the maximal ideal of $R$, $K$ the quotient field of $R$, $F = R_P/PR_P$  the quotient field of $A = R/P$, and $H_n(M/P) = \{x^n \in M/P \mid x \in F\}$. Suppose that $x = a + P, y = b + P \in A$, and $x^n \nmid y^n$ in $A$. Then $a^n \nmid b^n$ in $R$; so $b^n \mid a^nd$ in $R$ for every $d \in A_n(M)$ by Corollary~\ref{S4C4}. Thus $y^n \mid x^nh$ in $A$ for every $h \in H_n(M/P)$; so $A$ is an $n$-PVD by Corollary~\ref{S4C4} again.
\end{proof}

Let $n$ be a positive integer. Recall that an integral domain $R$ with quotient field $K$ is  {\it $n$-root closed} if whenever $x^n \in R$ for $x \in K$, then $x \in R$; and $R$ is {\it root closed} if $R$ is $n$-root closed for every positive integer $n$. For example, an integrally closed integral domain is root closed. Note that $R$ is $mn$-root closed if and only if $R$ is $m$-root closed and $n$-root closed. Thus $\mathcal{C}(R) = \{ n \in \mathbb{N} \mid R$ is $n$-root closed$\}$ is a multiplicative submonoid on $\mathbb{N}$ generated by some set of prime numbers. Moreover, for $S$ any multiplicative submonoid of $\mathbb{N}$ generated by a set of prime numbers, $S = \mathcal{C}(R)$ for some integral domain $R$ \cite[Theorem 2.1]{A3}. 

For $n$-root closed integral domains, the $n$-PVD and PVD concepts coincide.

\begin{thm}\label{S4T7}
Let $R$ be an $n$-root closed integral domain with quotient field $K$. Then $R$ is an $n$-PVD if and only if $R$ is a PVD. In particular, an integrally closed $n$-PVD is a PVD.
\end{thm}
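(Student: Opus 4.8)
The plan is to prove the two implications separately; only the forward direction ``$n$-PVD $\Rightarrow$ PVD'' requires the $n$-root closed hypothesis, while ``PVD $\Rightarrow$ $n$-PVD'' holds for every integral domain. For the (easy) reverse direction, suppose $R$ is a PVD. For any prime ideal $P$ of $R$, $P$ is automatically an $n$-semiprimary ideal (if $x^ny^n\in P$, then $(xy)^n\in P$, so $xy\in P$ as $P$ is prime, whence $x^n\in P$ or $y^n\in P$), while $\sqrt{P}=P$ is strongly prime since $R$ is a PVD; so Theorem~\ref{S4T0}(a) gives that $P$ is an $n$-powerful semiprimary ideal of $R$. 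As this holds for every prime ideal, $R$ is an $n$-PVD; this is essentially the remark made just before the theorem.

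For the forward direction, assume $R$ is $n$-root closed and an $n$-PVD. By Theorem~\ref{S4T2}, $R$ is quasilocal; let $M$ be its maximal ideal, which is then an $n$-powerful semiprimary ideal of $R$. The key consequence of $n$-root closedness to isolate is: for $z\in K$, $z^n\in M$ implies $z\in M$ (indeed $z^n\in M\subseteq R$ gives $z\in R$, and then $z\in\sqrt{M}=M$ since $M$ is prime). Now let $x,y\in K$ with $xy\in M$; since $xy\in M\subseteq R$ we have $(xy)^n=x^ny^n\in M$, so $x^n\in M$ or $y^n\in M$ because $M$ is $n$-powerful semiprimary, and hence $x\in M$ or $y\in M$ by the previous observation. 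Thus $M$ is strongly prime, and so $R$ is a PVD by \cite[Theorem 1.4]{HH1}.

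The ``in particular'' statement follows at once, since an integrally closed domain is root closed, hence $n$-root closed for every $n$, so an integrally closed $n$-PVD meets the hypothesis of the theorem. I do not anticipate a real obstacle here: once one observes that $n$-root closedness is exactly what is needed to descend from $n$-th powers in $M$ (or in $R$) to the elements themselves, the argument is short. The only points requiring minor care are that prime ideals are $n$-semiprimary (so that Theorem~\ref{S4T0}(a) applies in the reverse direction) and that $R$ is quasilocal (Theorem~\ref{S4T2}) before one speaks of its maximal ideal $M$.
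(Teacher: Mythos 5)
Your proof is correct and follows essentially the same route as the paper: both arguments reduce to the maximal ideal $M$ of the quasilocal domain $R$ and use $n$-root closedness to descend from $n$-th powers. The only (harmless) differences are cosmetic: at the end of the forward direction the paper first shows $M$ is a powerful ideal (concluding only $y \in R$) and then upgrades to strongly prime via Theorem~\ref{S4T1}, whereas you prove strong primeness of $M$ directly from the observation that $z^n \in M$ forces $z \in M$ (using $n$-root closedness plus primality of $M$), thereby bypassing Theorem~\ref{S4T1}; and for the reverse direction, which the paper dismisses as clear, your appeal to Theorem~\ref{S4T0}(a) works, though one can argue even more directly by applying the definition of strongly prime to the elements $x^n, y^n \in K$.
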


\begin{proof}
If $R$ is a PVD, then clearly $R$ is an $n$-PVD. Conversely, let $R$ be an $n$-root closed $n$-PVD with maximal ideal $M$. We show that $M$ is a powerful ideal of $R$. Let $xy \in M$ for $x, y\in K$ and $x\not \in R$. Then $x^ny^n \in M$ and  $x^n \not \in R$ since $R$ is $n$-root closed. Thus $y^n \in M \subseteq  R$ since $M$ is an $n$-powerful semiprimary ideal of $R$, and hence $y \in R$ since $R$ is $n$-root closed. Thus $M$ is a powerful ideal of $R$; so $M$ is a strongly prime ideal of  R (i.e., $M$ is a $1$-powerful semiprimary ideal of  $R$) by Theorem \ref{S4T1}. Hence $R$ is a PVD. The ``in particular'' statement is clear.
\end{proof}

Recall (\cite{AZ1}, \cite{AKL}, \cite{AZ2}, \cite{MMM}) that an integral domain $R$ with quotient field $K$ is an {\it almost valuation domain} if for every  $0 \neq x \in K$, there is a positive integer $n$ (depending on $x$) such that $x^n \in R$ or $x^{-n} \in R$. We have the following ``n'' generalization.

\begin{dfn}
{\rm Let $n$ be a positive integer. An integral domain $R$ with quotient field $K$ is an {\it $n$-valuation domain {\rm (}$n$-VD{\rm )}} if  $x^n \in R$ or $x^{-n} \in R$ for every $0 \neq x \in K$.}
\end{dfn}

It is clear that a valuation domain is an $n$-VD for every positive integer $n$, an $n$-root closed $n$-VD is a valuation domain, an $n$-VD is an almost valuation domain, an $n$-VD is also an $mn$-VD for every positive integer $m$, and an $n$-VD is an $n$-PVD. Moreover, an $n$-VD is quasilocal, an overring of an $n$-VD is also an $n$-VD, and a Noetherian $n$-VD has  (Krull) dimension at most one. 

We have the following elementary results about $n$-VDs which show that $n$-VDs behave very much like valuation domains (cf. \cite[Chapter 3]{G}). In \cite[page 3]{AA}, it was observed that $R$ is a valuation domain if and only if $R$ is a strongly prime ideal of $R$ (here, and in Theorem~\ref{nvd}(a)(5), we drop the usual assumption that a prime ideal is a proper ideal).

\begin{thm} \label{nvd}
Let $R$ be an integral domain with quotient field $K$ and $n$ a positive integer.

{\rm (a)} The following statements are equivalent.

\begin{enumerate}
   \item $R$ is an $n$-VD.
   \item $x^n  \, | \, y^n$ or $y^n \, | \, x^n$ for every $0 \neq x, y \in K$.
   \item  $x^n \, | \, y^n$ or $y^n \, | \, x^n$ for every $0 \neq x, y \in R$.
   \item Let $G$ be the group of divisibility of $R$. Then  for every $g \in G$, either $ng \geq 0$ or $ng < 0$.
   \item  $R$ is an $n$-powerful semiprimary ideal of $R$.
\end{enumerate} 

{\rm (b)} Let $R$ be an $n$-VD. Then $R$ is an $n$-divided domain, and thus the prime ideals of $R$ are linearly ordered by inclusion.

{\rm (c)} Let $R$ be an $n$-VD and $x \in K$. If $x^n$ is integral over $R$, then $x^n \in R$.
\end{thm}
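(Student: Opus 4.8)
The plan is to prove the three parts in order, all by direct and elementary arguments, with statement (1) of part (a) serving as the hub of the equivalences.

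For part (a), I would first prove $(1)\Leftrightarrow(2)$: for $0\neq x,y\in K$ put $z=x/y$, and observe that $z^{n}\in R$ says $y^{n}\mid x^{n}$ while $z^{-n}\in R$ says $x^{n}\mid y^{n}$, so the $n$-VD condition applied to all such $z$ is exactly (2), and conversely (2) with $y=1$ recovers (1). Then $(2)\Rightarrow(3)$ is immediate, and for $(3)\Rightarrow(2)$ I would clear denominators: writing $x=a/b$ and $y=c/d$ with $a,b,c,d\in R\setminus\{0\}$, the relation $x^{n}\mid y^{n}$ (resp.\ $y^{n}\mid x^{n}$) in $K$ is equivalent to $(da)^{n}\mid(cb)^{n}$ (resp.\ $(cb)^{n}\mid(da)^{n}$) in $R$, so (3) applied to the pair $da,\,cb$ yields (2). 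For $(1)\Leftrightarrow(4)$ I would unwind the definition of the group of divisibility $G=K^{*}/U(R)$ with its divisibility order, so that for $g=xU(R)$ we have $ng\geq 0$ iff $x^{n}\in R$ and $ng\leq 0$ iff $x^{-n}\in R$ (and $ng=0$ forces $x^{n}\in U(R)\subseteq R$); then ``$ng\geq 0$ or $ng<0$ for every $g\in G$'' is literally the $n$-VD condition. Finally, for $(1)\Leftrightarrow(5)$ (reading ``$R$ is an $n$-powerful semiprimary ideal of $R$'' as: $x^{n}y^{n}\in R$ with $x,y\in K$ implies $x^{n}\in R$ or $y^{n}\in R$): if this holds, then $x^{n}(x^{-1})^{n}=1\in R$ forces $x^{n}\in R$ or $x^{-n}\in R$ for every $0\neq x\in K$, giving (1); conversely, if $R$ is an $n$-VD and $x^{n}y^{n}\in R$ with $x^{n}\notin R$, then $x^{-n}\in R$, whence $y^{n}=x^{-n}(x^{n}y^{n})\in R$, giving (5).

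For part (b), I would show directly that every prime ideal $P$ of an $n$-VD $R$ is $n$-divided. Given $x\in R\setminus P$ and $p\in P$ with $p\neq 0$ (the case $p=0$ being trivial), part (a)(3) gives $x^{n}\mid p^{n}$ or $p^{n}\mid x^{n}$ in $R$; in the latter case $p\in P$ forces $x^{n}\in P$, hence $x\in P$, contradicting $x\notin P$, so $x^{n}\mid p^{n}$, which is exactly the $n$-divided condition. Thus $R$ is an $n$-divided domain, and then the prime ideals of $R$ are linearly ordered by inclusion by Theorem~\ref{S4T3.5}. (Alternatively, one may use that an $n$-VD is an $n$-PVD, together with Corollary~\ref{S4C2}.)

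For part (c), suppose $x\in K$ and $x^{n}$ is integral over $R$, and assume toward a contradiction that $x^{n}\notin R$. Since $R$ is an $n$-VD, $x^{-n}\in R$. Starting from a monic relation $x^{nk}+a_{k-1}x^{n(k-1)}+\cdots+a_{1}x^{n}+a_{0}=0$ with all $a_{i}\in R$ and multiplying through by $(x^{-n})^{k-1}$, I can solve $x^{n}=-\bigl(a_{k-1}+a_{k-2}x^{-n}+\cdots+a_{0}(x^{-n})^{k-1}\bigr)$, and the right-hand side lies in $R$ because $x^{-n}\in R$, a contradiction; hence $x^{n}\in R$. I do not expect a genuine obstacle anywhere in this theorem: the whole argument reduces to manipulating $n$-th powers and divisibilities, and the only points needing a little care are the order convention on the group of divisibility used in $(1)\Leftrightarrow(4)$ and the exponent bookkeeping in $(3)\Rightarrow(2)$ and in part (c).
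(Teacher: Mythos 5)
Your proposal is correct, and it is in substance the proof the paper intends: the paper simply remarks that the arguments are ``essentially the same as for valuation domains'' and cites Gilmer's Theorems 16.3 and 17.5 for parts (a) and (c), whereas you have written out exactly those classical arguments adapted to $n$-th powers (the $z=x/y$ reduction and clearing of denominators for the equivalences in (a), the order-unit translation for the group of divisibility, and the standard ``multiply the integral equation by $(x^{-n})^{k-1}$'' trick for (c)), all of which check out, including the small points you flag ($ng=0$ forcing $x^n\in U(R)$, and reading (5) with $R$ as an improper ideal of itself). The only genuine divergence is part (b): the paper deduces it from Corollary~\ref{S4C2} via the implication $n$-VD $\Rightarrow$ $n$-PVD, while you prove $n$-dividedness of each prime directly from (a)(3) and then invoke Theorem~\ref{S4T3.5}; your route is more self-contained (it does not need the $n$-powerful semiprimary machinery of Section 4), and you correctly note the paper's route as an alternative, so both are fine.
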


\begin{proof}
The proofs are essentially the same as for valuation domains. See \cite[Theorem 16.3]{G} for part (a) and \cite[Theorem 17.5]{G} for part (c).  Part (b) follows from Corollary~\ref{S4C2} since an $n$-VD is also an $n$-PVD.
\end{proof}

An $n$-VD is always an $n$-PVD, but an $n$-PVD need not be an $n$-VD. Also, an almost valuation domain 
need not be an $n$-VD for any positive integer $n$.

\begin{exa} \label{ex22}
{\rm (a)  Let $R = \mathbb{Q} + X\mathbb{R}[[X]]$. Then $R$ is a PVD with maximal ideal $X\mathbb{R}[[X]]$ and quotient field $\mathbb{R}[[X]][1/X]$, and thus $R$ is an $n$-PVD for every positive integer $n$. However, $R$ is not an $n$-VD for any positive integer $n$ since $\pi^n, \pi^{-n} \notin R$ for every positive integer $n$.

(b) Let $R = \mathbb{Z}_p + XF[[X]]$, where $p$ is a positive prime integer and $F = \overline{\mathbb{Z}_p}$ is the algebraic closure of $\mathbb{Z}_p$. Then $R$ is an almost valuation domain with maximal ideal $XF[[X]]$ and quotient field $F[[X]][1/X]$, but not an $n$-VD for any positive integer $n$. This follows from the fact that for every $0 \neq a \in F$, there is a positive integer $n$ such that $a^n = 1$; but for every positive integer $n$, there is a $b \in F$ such that $b^n \notin \mathbb{Z}_p$ and $b^{-n} \notin \mathbb{Z}_p$. Note that $R$ is also a PVD, and thus an $n$-PVD for every positive integer $n$.}
\end{exa}

 In some cases, an overring of an $n$-PVD is also an $n$-VD.

\begin{thm}\label{S4T8}
Let $R$ be an $n$-PVD  with maximal ideal $M$, quotient field $K$, and $V$ an overring of $R$ such that $1/s \in V$ for some $0 \neq s \in M$. Then $V$ is an $n$-VD, and thus $V$ is an almost valuation domain.
\end{thm}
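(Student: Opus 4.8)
The goal is to show that $V$ is an $n$-VD. Since $R$ is an $n$-PVD with maximal ideal $M$, Corollary~\ref{S4C4} tells us that $x^{-n}d \in R$ for every $x \in E_n(R)$ and $d \in A_n(M)$, and in particular $M$ is an $n$-powerful semiprimary ideal of $R$. The plan is to take an arbitrary $0 \neq x \in K$ with $x^n \notin V$ and show $x^{-n} \in V$. Since $x^n \notin V \supseteq R$, we have $x \in E_n(R)$, so the $n$-PVD hypothesis applies to $x$.

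First I would exploit the element $1/s \in V$ with $0 \neq s \in M$. Note $s^n \in A_n(M)$, so $x^{-n}s^n \in R$ by Corollary~\ref{S4C4}(2). More generally, for each positive integer $k$ we have $s^{kn} \in M \cap R$, hence $s^{kn} \in A_n(M)$ (as $s^{kn} = (s^k)^n$), so $x^{-n}s^{kn} \in R \subseteq V$. Since $1/s \in V$, multiplying by $(1/s)^{kn}$ (which lies in $V$) would give $x^{-n} = (x^{-n}s^{kn})(1/s)^{kn} \in V$ for any single choice of $k$, say $k = 1$: indeed $x^{-n} = (x^{-n}s^n)(1/s)^n \in V$. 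So the argument is short once the membership $x^{-n}s^n \in R$ is in hand, and that is exactly what Corollary~\ref{S4C4} provides, since $x \in E_n(R)$ and $s^n \in A_n(M)$.

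Thus the key steps are: (1) let $0 \neq x \in K$ with $x^n \notin V$; since $R \subseteq V$, conclude $x^n \notin R$, i.e.\ $x \in E_n(R)$; (2) observe $s^n \in A_n(M)$ because $s \in M$ and $s^n = (s)^n \in M$; (3) apply Corollary~\ref{S4C4} (statement (1) $\Rightarrow$ (2)) to get $x^{-n}s^n \in R \subseteq V$; (4) use $1/s \in V$, so $(1/s)^n \in V$, and multiply to get $x^{-n} = (x^{-n}s^n)(1/s)^n \in V$; (5) conclude that for every $0 \neq x \in K$, $x^n \in V$ or $x^{-n} \in V$, i.e.\ $V$ is an $n$-VD. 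The final sentence ``thus $V$ is an almost valuation domain'' is immediate since an $n$-VD is an almost valuation domain, as already observed in the text following the definition of $n$-VD.

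I do not anticipate a serious obstacle here; the main subtlety is simply making sure the hypotheses of Corollary~\ref{S4C4} are met, namely that $x \in E_n(R)$ (which follows from $x^n \notin V$ and $R \subseteq V$) and that $s^n \in A_n(M)$ (which is immediate since $s \in M$). One should also double-check that $V$, being an overring of the integral domain $R$, is itself an integral domain with the same quotient field $K$, so that the definition of $n$-VD applies to $V$; this is standard. It is worth remarking, as the paper's marginal note suggests, that in fact $x^{-n}s^n \in M$ (not merely in $R$) by Corollary~\ref{S4C4}(3), but this stronger conclusion is not needed for the proof.
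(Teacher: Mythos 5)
Your proof is correct and follows essentially the same route as the paper: from $x^n \notin V \supseteq R$ conclude $x \in E_n(R)$, note $s^n \in A_n(M)$, apply Corollary~\ref{S4C4} to get $x^{-n}s^n \in R$ (the paper uses the equivalent formulation giving $x^{-n}s^n \in M$), and divide by $s^n$ using $1/s \in V$. The only difference is cosmetic, as you yourself observe in the final remark.
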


\begin{proof}
Let $x \in K$ with $x^n \notin V$; so $x \in E_n(R)$. Then $x^{-n}d \in M$ for every $d \in A_n(M)$ by Corollary \ref{S4C4}. In particular, $a = x^{-n}s^n \in M$ since $d = s^n \in A_n(M)$, and thus $x^{-n} = a/s^n \in V$ since $1/s \in V$. Hence $V$ is an $n$-VD, and thus $V$ is an almost valuation domain.
\end{proof}

By Theorem~\ref{S4T0}(c), if $R$ is an $n$-PVD, then $R_P$ is also an $n$-PVD for every nonmaximal prime ideal $P$ of $R$. We next give a stronger result; $R_P$ is an $n$-VD.

\begin{thm}\label{S4T9}
Let $R$ be an $n$-PVD  with maximal ideal $M$ and $P \subsetneq M$  a prime ideal of $R$. Then $R_P$ is an $n$-VD, and thus $R_P$ is an almost valuation domain. Moreover, $x^n \in R$ for every $x \in P_P$, and hence $P_P \subsetneq \overline{R}$.
\end{thm}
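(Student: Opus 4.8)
The statement has three parts: (i) $R_P$ is an $n$-VD; (ii) $x^n \in R$ for every $x \in P_P$; and (iii) $P_P \subsetneq \overline{R}$. The plan is to treat (i) first using the characterization of $n$-PVDs via divisibility from Corollary~\ref{S4C4}, localized at $P$. Since $R$ is an $n$-PVD, it is quasilocal with maximal ideal $M$ (Theorem~\ref{S4T2}), and by Theorem~\ref{S4T0}(c) the localization $R_P$ is an $n$-PVD with maximal ideal $P_P$. To upgrade this to ``$n$-VD'', I would show that the maximal ideal $P_P$ of $R_P$ behaves like a valuation ideal: take $x \in K$ (the common quotient field of $R$, $R_P$, $P_P$) with $x^n \notin R_P$, so $x \in E_n(R_P)$. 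Since $P$ is \emph{not} maximal in $R$, there exists $s \in M \setminus P$; then $1/s \in R_P$. Applying Corollary~\ref{S4C4}(2) to the $n$-PVD $R_P$ with $d = s^n \in A_n(P_P)$ (note $s^n \in P_P$? — actually $s \notin P$, so this needs care; instead pick $d$ correctly), I get $x^{-n}d \in R_P$. The cleaner route mirrors the proof of Theorem~\ref{S4T8}: $R_P$ is an $n$-PVD and $1/s \in R_P$ for $s \in M \setminus P \subseteq$ (a nonunit of $R$)... but $s$ need not lie in $P_P$. So instead I invoke Theorem~\ref{S4T8} directly: is $R_P$ an overring $V$ of the $n$-PVD $R$ with $1/s \in V$ for some $0 \neq s \in M$? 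Yes — take any $s \in M \setminus P$ (exists since $P \subsetneq M$); then $s$ is a unit in $R_P = V$, so $1/s \in V$, and $0 \neq s \in M$. Hence Theorem~\ref{S4T8} gives immediately that $V = R_P$ is an $n$-VD, and thus an almost valuation domain.

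**The ``moreover'' part.** For (ii), let $x \in P_P$; I must show $x^n \in R$. Since $R_P$ is an $n$-VD (part (i)) and $x \in P_P \subsetneq R_P$, in particular $x \in R_P$, so $x^n \in R_P$. But I need $x^n \in R$, which is stronger. Here I would use that $P$ is an $n$-powerful semiprimary (hence $n$-divided) prime ideal of $R$ by Theorem~\ref{S4T4}: write $x = p/t$ with $p \in P$ and $t \in R \setminus P$. Then $x^n = p^n/t^n$, and since $P$ is $n$-divided, $t^n \mid p^n$ in $R$, i.e. $p^n = t^n r$ for some $r \in R$; thus $x^n = p^n/t^n = r \in R$. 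Moreover $r = x^n = p^n/t^n$, and since $p \in P$, $p^n \in P$, while $t^n \notin P$; since $P$ is prime and $p^n = t^n r \in P$ with $t^n \notin P$, we get $r \in P$. So in fact $x^n \in P \subseteq R$, which is even a little more than claimed.

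**The strict containment.** For (iii), $P_P \subsetneq \overline{R}$: first, $P_P \subseteq \overline{R}$ because every $x \in P_P$ satisfies $x^n \in R$ by (ii), so $x$ is a root of the monic polynomial $T^n - x^n \in R[T]$, hence $x$ is integral over $R$, i.e. $x \in \overline{R}$. For strictness, note $1 \in \overline{R}$ but $1 \notin P_P$ since $P_P$ is a proper ideal of $R_P$ (indeed $P_P$ is the maximal ideal of $R_P$); so the containment is proper.

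**Main obstacle.** The only genuinely delicate point is verifying that Theorem~\ref{S4T8} applies cleanly to $V = R_P$ — i.e. checking that $R_P$ really is an overring of $R$ (a subring of $K$ containing $R$, which it is) and that the required element $s \in M$ with $1/s \in R_P$ exists, which hinges precisely on $P \subsetneq M$ so that $M \setminus P \neq \emptyset$. Everything else is a short divisibility argument using that $P$ is $n$-divided (Theorem~\ref{S4T4}). I expect the write-up to be only a few lines.
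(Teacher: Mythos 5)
Your proposal is correct and follows essentially the same route as the paper: Theorem~\ref{S4T8} applied to the overring $R_P$ (using $s \in M \setminus P$) gives that $R_P$ is an $n$-VD, and writing $x \in P_P$ as $a/t$ with $a \in P$, $t \in R \setminus P$ and invoking the $n$-dividedness of $P$ from Theorem~\ref{S4T4} gives $x^n \in R$, whence $P_P \subsetneq \overline{R}$. Your added observations (that in fact $x^n \in P$, and the explicit integrality/properness justification for the last containment) are correct refinements of the same argument.
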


\begin{proof}
Since $P \subsetneq M$, there is an $s \in M \setminus P$. Thus $1/s \in R_P$; so $R_P$ is an $n$-VD (and hence also  an almost valuation domain) by Theorem \ref{S4T8}. Let $x \in P_P$; so $x = a/s$ for some $a\in P$ and $s \in R\setminus P$. Thus $s^n \, | \, a^n$ (in $R$) since $P$ is an $n$-divided prime ideal of $R$ by Theorem \ref{S4T4}. Hence $x^n = a^n/s^n \in R$; so $P_P \subsetneq \overline{R}$.
\end{proof}

We next show that $n$-divided principal prime ideals are actually maximal ideals.

\begin{thm}\label{S4T10}
Let $R$ be an integral domain $R$ with quotient field $K$ and (nonzero) principal prime ideal $P$. If $P$ is an $n$-divided ideal of $R$, then $P$ is a maximal ideal of $R$. Moreover, if $P$ is also an $n$-powerful semiprimary ideal of $R$, then $P$ is a maximal ideal of $R$ and $R$ is an $n$-VD.
\end{thm}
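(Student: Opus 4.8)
The plan is to treat the two assertions separately: a short descent argument handles the first, and Corollary~\ref{S4C4} handles the second. For the first assertion, write $P = (p)$ with $p \neq 0$ and argue by contradiction. If $P$ is not maximal, choose a maximal ideal $M \supsetneq P$ and an element $x \in M \setminus P$. Since $P$ is $n$-divided, $x^n \mid p^n$ in $R$, and I would then show $p \in (x^n)$ by descent: put $T = \{\, m \geq 1 : p^m \in (x^n) \,\}$, so $n \in T$; if $m \in T$ and $m \geq 2$, write $p^m = x^n z$, and since $p^m \in P$, $x^n \notin P$, and $P$ is prime we get $z \in P = (p)$, say $z = p z'$, so cancelling $p$ (valid in a domain) gives $p^{m-1} = x^n z' \in (x^n)$ and $m-1 \in T$. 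Hence $1 \in T$, i.e.\ $p = x^n t$; applying the same primeness step once more forces $t \in (p)$, and one more cancellation gives $x^n t' = 1$ for some $t' \in R$, so $x$ is a unit, contradicting $x \in M$. Thus $P$ is maximal.

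For the ``moreover'' part, assume in addition that $P$ is $n$-powerful semiprimary. By Theorem~\ref{S4T4} it is then $n$-divided, hence maximal by the first part, so $R$ is an $n$-PVD by Corollary~\ref{S4C1} and therefore quasilocal with maximal ideal $M := P = (p)$ by Theorem~\ref{S4T2}. To see that $R$ is an $n$-VD, fix $0 \neq x \in K$ and suppose for contradiction that $x^n \notin R$ and $x^{-n} \notin R$, so $x, x^{-1} \in E_n(R)$. Since $p \in M$ we have $p^n \in M$, i.e.\ $p^n \in A_n(M)$. Applying Corollary~\ref{S4C4} to $x$ with $d = p^n$ gives $r := x^{-n} p^n \in R$, and applying it to $x^{-1}$ with $d = p^n$ gives $x^n p^n \in R$; since $r \neq 0$ and $x^n = p^n/r$, the latter says $r \mid p^{2n}$ in $R$, say $p^{2n} = rs$.

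Now let $k$ be the largest integer with $p^k \mid r$. Such a $k$ exists with $k \leq 2n$: if $p^{2n+1} \mid r$, then $p^{2n} = rs \in (p^{2n+1})$, and cancelling $p^{2n}$ would make $p$ a unit, impossible since $p \in M$. Write $r = p^k u$ with $p \nmid u$; then $u \notin (p) = M$, so $u$ is a unit because $R$ is quasilocal with maximal ideal $(p)$. Hence $x^{-n} = r/p^n = u\, p^{k-n}$, so $x^{-n} \in R$ if $k \geq n$ and $x^n = u^{-1} p^{n-k} \in R$ if $k < n$; either way we contradict the choice of $x$, and therefore $R$ is an $n$-VD. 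The primeness deductions and cancellations above are routine in a domain; the one genuinely useful move is feeding $d = p^n$ into Corollary~\ref{S4C4}, exploiting that $M$ is generated by $p$, and after that the only point needing care is the finiteness of the exponent $k$ — which is exactly where $r \mid p^{2n}$ and the nonunitality of $p$ enter. So I expect no real obstacle here beyond bookkeeping this descent cleanly.
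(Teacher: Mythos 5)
Your proof is correct and follows essentially the same route as the paper: reduce to comparing $p$-exponents after feeding $d = p^n$ into the $E_n$/$A_n$ criterion, using that $R$ is quasilocal with maximal ideal $P=(p)$ once $P$ is shown maximal via Theorem~\ref{S4T4} and Corollary~\ref{S4C1}. The only differences are organizational: the paper handles the first assertion by writing the cofactor of $x^n$ as a unit-like element times an exact power of $p$ rather than by your descent on the exponent, and for the $n$-VD claim it applies Theorem~\ref{S4T5} once (obtaining $x^{-n}p^n \in P$ and splitting on whether $p^n$ divides it) instead of your symmetric double application of Corollary~\ref{S4C4} to $x$ and $x^{-1}$.
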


\begin{proof}
Let $P = (p)$ for a prime element $p$ of $R$. By way of contradiction, assume that $P$ is not a maximal ideal; so there is a nonunit $x \in R\setminus P$. If $P$ is an $n$-divided prime ideal of $R$, then there is a $y \in R$ with $p^n = x^nyp^{n}$ or $p^n = x^nwp^m$ for some positive integer $m < n$ and $w\in R\setminus P$. If $p^n = x^nyp^{n}$, then $1 = x^ny$, and thus $x \in U(R)$, a contradiction. If $p^n = x^nwp^m$, then  $x^nw = p^{n-m} \in P$, which is a contradiction since  $x \notin P$ and $w \notin P$. Hence $P$ is a maximal ideal of $R$. 

Now, suppose that $P = (p)$ is an $n$-powerful semiprimary ideal of $R$. Then $P$ is an $n$-divided prime ideal of $R$ by Theorem \ref{S4T4}. Thus $P$ is a maximal ideal of $R$, and hence $R$ is an $n$-PVD by Corollary \ref{S4C1}. Finally, we show that $R$ is an $n$-VD. Let $x \in K$, and suppose that $x^n \notin R$. Then $x^n \notin P$, and thus $x^{-n}p^n \in P$ by Theorem \ref{S4T5}. Since $x^{-n}p^n \in P = (p)$, we have $x^{-n}p^n = hp^n$ for some $h \in R$ or $x^{-n}p^n = dp^m$ for some positive integer $m < n$ and $d \in U(R)$. If $x^{-n}p^n = dp^m$, then $x^n = d^{-1}p^{n-m} \in R$, a contradiction. Thus $x^{-n}p^n = hp^n$ for some $h \in R$, and hence $x^{-n} = h \in R$. Thus $R$ is an $n$-VD.
\end{proof}

We have already observed several parts of the next theorem. One interesting consequence is that if $P$ is an $n$-powerful semiprimary prime ideal of an integral domain $R$ with quotient field $K$, then $\{ x \in K \mid x^m \in P$ for some positive integer $m \} = \{ x \in K \mid x^n \in P \}$ (cf. Theorem~\ref{T1}).

\begin{thm}\label{S4T11}
Let $P$ be a prime ideal of an integral domain $R$ with quotient field $K$. If $P$ is an $n$-powerful semiprimary ideal of $R$, then $P$ is an $mn$-powerful semiprimary ideal of $R$ for every positive integer $m$. Furthermore, if $x^m \in P$ for a positive integer $m$ and $x \in K$, then $x^n \in P$. In particular, if $R$ is an $n$-PVD, then $R$ is an $mn$-PVD for every positive integer $m$.
\end{thm}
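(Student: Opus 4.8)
The plan is to prove the three assertions in turn, each by reducing to earlier results. For the first assertion, suppose $P$ is an $n$-powerful semiprimary ideal of $R$ and fix a positive integer $m$. To show $P$ is $mn$-powerful semiprimary, take $x,y \in K$ with $x^{mn}y^{mn} \in P$, i.e.\ $(x^m)^n(y^m)^n \in P$. Since $x^m, y^m \in K$, the $n$-powerful semiprimary hypothesis gives $(x^m)^n \in P$ or $(y^m)^n \in P$, that is, $x^{mn} \in P$ or $y^{mn} \in P$, which is exactly what is needed. (This part is essentially immediate and was already noted right after the definition of $n$-powerful semiprimary ideal.)

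For the second assertion, suppose $x^m \in P$ for some positive integer $m$ and some $x \in K$; we want $x^n \in P$. The idea is to mimic the proof of Theorem~\ref{T1}, but working in $K$ rather than $R$. By Theorem~\ref{S4T4}, $P$ is an $n$-divided prime ideal of $R$, and in particular $P \subseteq \sqrt{I}$-type reasoning applies; more directly, note first that by the first assertion we may replace $n$ by any multiple of $n$, so without loss of generality we can assume $m$ is a multiple of $n$, say $m = kn$. Then $x^{kn} \in P$. Now argue by descent on $k$: if $k \geq 2$, write $x^{kn} = (x^{(k-1)})^n \cdot x^n$... more carefully, $(x^{k-1})^n (x)^n = x^{kn} \in P$ only makes sense if we track exponents, so instead set $u = x$ and use $(x^{k-1})^n (x^1)^n = x^{kn - n + n}$ — this needs $x^{(k-1)n} x^n = x^{kn} \in P$, and since $x^{k-1}, x \in K$, the $n$-powerful semiprimary property yields $x^{(k-1)n} \in P$ or $x^n \in P$. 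Iterating this descent, we eventually conclude $x^n \in P$. The main obstacle here is bookkeeping: one must be sure the exponents line up so that the $n$-powerful semiprimary condition is applied to genuine $n$-th powers of elements of $K$, and the cleanest route is to first reduce (via the first assertion) to the case where the exponent witnessing membership is itself a multiple of $n$, then run a finite induction exactly as in the second half of the proof of Theorem~\ref{T1}.

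For the third assertion, suppose $R$ is an $n$-PVD, i.e.\ every prime ideal of $R$ is an $n$-powerful semiprimary ideal of $R$. Fix a positive integer $m$. By the first assertion, every prime ideal of $R$ is then an $mn$-powerful semiprimary ideal of $R$, so $R$ is an $mn$-PVD by definition. This is immediate once the first assertion is in hand.

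I expect the bulk of the work — and the only place where care is genuinely required — to be the exponent bookkeeping in the second assertion; everything else follows formally from the definitions together with Theorem~\ref{S4T4} (to know $P$ is $n$-divided, should that be needed) and the first assertion.
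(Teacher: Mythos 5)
Your proposal is correct and follows essentially the same route as the paper: the first and third assertions are handled identically, and your descent on the exponent is exactly the paper's argument, which takes the least positive integer $d$ with $x^{dn}\in P$ and applies the $n$-powerful semiprimary condition to $(x^{d-1})^n x^n = x^{dn}$. The only repair needed is your justification for reducing to an exponent divisible by $n$: this does not follow from the first assertion but simply from the ideal property, since $x^m\in P\subseteq R$ gives $x^{mn}=(x^m)^n\in P$, which is precisely how the paper starts the descent.
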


\begin{proof}
Let $m$ be a positive integer. Assume that $x^{mn}y^{mn} \in P$ for  $x, y \in K$. Then $(x^m)^n(y^m)^n \in P$.  Since $P$ is an $n$-powerful semiprimary ideal of $R$, $(x^m)^n = x^{mn} \in P$ or $(y^m)^n = y^{mn} \in P$. Thus $P$ is an $mn$-powerful semiprimary ideal of $R$. Next, assume that $x^m \in P$ for $x \in K$ and some positive integer $m$; so $x^{mn} = (x^m)^n  \in P$. Let $d$ be the least positive integer such that $x^{dn} \in P$. Since $(x^{d-1})^nx^n = x^{dn} \in P$ and $P$ is an $n$-powerful semiprimary ideal of $R$, we have $(x^{d-1})^n \in P$ or $x^n \in P$. Hence $d = 1$, and thus $x^n \in P$. The ``in particular'' statement is clear.
\end{proof}

The next several results concern integral overrings of an $n$-PVD. In particular, an integral overring of an $n$-PVD is an $n$-PVD, and the integral closure of an $n$-PVD is a PVD. Note that $\{ x \in K \mid x^n \in M \} = \{ x \in \overline{R} \mid x^n \in M \}$ in the next several results and $\sqrt{MB} = \sqrt{M\overline{R}} \cap B$ for $B$ an integral overring of $R$.

\begin{thm} \label{S4T12} 
Let $R$ be an $n$-PVD with maximal ideal $M$ and quotient field $K$. If $B$ is an integral overring of $R$, then $B$ is an $n$-PVD with maximal ideal $M_B = \sqrt{MB} = \{x \in B \mid x^n \in M\}$.
\end{thm}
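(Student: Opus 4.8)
The plan is to use the characterization of $n$-PVDs among quasilocal domains given in Corollary~\ref{S4C4}, together with the structural information about integral overrings that follows from the $n$-divided property. First I would show that $B$ is quasilocal. Since $R$ is an $n$-PVD, its prime ideals are linearly ordered by inclusion (Corollary~\ref{S4C2}), and $R$ is $n$-divided; by the going-up and incomparability behaviour of integral extensions, the primes of $B$ lying over distinct primes of $R$ are themselves comparable, and over a fixed prime $P$ of $R$ the integrality forces a single prime of $B$ (using that $\sqrt{PB}$ is prime because $R_P/PR_P$-type arguments give $P$ is $n$-divided, hence $B/PB$ has a nilpotent-modulo structure with a unique minimal prime). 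In particular, the maximal ideal of $B$ is $M_B := \sqrt{MB}$, and one checks $\sqrt{MB} = \{x \in B \mid x^n \in M\}$: the inclusion $\supseteq$ is immediate, and for $\subseteq$, if $x \in B$ with $x^k \in M$ for some $k$, then since $x$ is integral over $R$ and $M$ is $n$-powerful semiprimary (hence $n$-divided, so $x^n \mid (x^k)^n$-type relations control powers), one deduces $x^n \in M$; alternatively this follows from Theorem~\ref{S4T11} applied to the prime ideal $M_B$ of $B$ once we know $B$ is an $n$-PVD, so it is cleaner to establish the $n$-PVD property first and then read off this description.

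The heart of the proof is verifying condition (3) of Corollary~\ref{S4C4} for $B$ with maximal ideal $M_B$: for every $x \in K$ with $x^n \notin M_B$ and every $d \in A_n(M_B)$, we must show $x^{-n}d \in M_B$. Write $d = y^n$ with $y \in K$ and $y^n \in M_B$. By the description of $M_B$, the condition $y^n \in M_B$ says $y^{n} \in B$ and $(y^n)^{?}$... more carefully: $y^n \in M_B$ means $y^n \in B$ and $(y^n)$ lies in $\sqrt{MB}$, equivalently $(y^n)^k \in MB$ for some $k$; but since $y^n$ is then integral over $R$, I would argue $(y^n)^{n} \in M$, i.e. $y^{n^2} \in M \subseteq M_B$, and more to the point $y^n B \subseteq M_B$. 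Meanwhile $x^n \notin M_B$ means $x^n \notin \sqrt{MB}$. I would like to reduce to the $n$-PVD condition for $R$ itself. The key point: $x^{-n} y^n = x^{-n}d \in K$; I want to land this element in $M_B$. Consider $(x^{-1}y)^n (x)^n = y^n \in M_B$. If $x^n \in B$ then, since $M_B$ is $n$-powerful semiprimary in $B$ (which is what we are proving — circular), we would be done; so instead I route through $R$: replace $d = y^n$ by an element of $A_n(M)$. Since $y^n \in M_B = \sqrt{MB}$ and $B$ is integral over $R$, some power $(y^n)^t$ satisfies an integral dependence relation over $R$ whose constant term lies in $M$; manipulating this, one extracts that $y^{nt'} \in M$ for a suitable $t'$, and by Theorem~\ref{S4T11} (once bootstrapped) $y^{n} \cdot(\text{unit adjustments})$... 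The cleanest route is: show directly that $d' := (y^n)^n \in A_n(M)$ (namely $d' = (y^{n})^n$ and $y^{n^2} = (y^n)^n \in M$ because $y^n$ integral over $R$ and $y^n \in \sqrt{MR}$-radical forces, via the $n$-divided property of $M$, that $(y^n)^n \in M$). Then apply Corollary~\ref{S4C4}(2) for $R$ to $x' := x$... but $x^n \notin M$ need not hold. Here is the real reduction: from $x^n \notin M_B = \sqrt{MB} \supseteq M$, we get $x^n \notin M$, so $x \in E_n(M)$ relative to $R$. Apply Corollary~\ref{S4C4}(3) for $R$: $x^{-n} d' \in M$ for every $d' \in A_n(M)$. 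Taking $d' = y^{n^2} = (y^n)^n \in A_n(M)$ gives $x^{-n} y^{n^2} \in M \subseteq M_B$. Then $x^{-n} y^n = (x^{-n} y^{n^2}) \cdot y^{-n(n-1)}$, and since $y^{n} \in M_B$ forces $y^{-n} \notin B$... this needs care: I instead write $x^{-n} y^n = (x^{-n} y^{n^2})^{1/n} \cdot (\text{stuff})$, which is illegitimate.

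So the genuinely delicate step — and the one I expect to be the main obstacle — is passing from "$x^{-n} \cdot (y^n)^n \in M$" back to "$x^{-n} y^n \in M_B$". I would handle this by a cleaner choice at the start: observe that it suffices to prove $M_B$ is an $n$-powerful ideal of $B$ (then Corollary~\ref{S4C1} for $B$ finishes), i.e. whenever $u^n v^n \in M_B$ for $u,v \in K$ then $u^n \in B$ or $v^n \in B$. Given $u^n v^n \in M_B \subseteq \sqrt{MB}$, some power lands in $MB$, hence in particular $(u^n v^n)^n \in M \cdot B$; writing out the $B$-module generators, one gets $(uv)^{n^2} = \sum m_i b_i$ with $m_i \in M$, $b_i \in B$; since each $b_i$ is integral over $R$ and $M$ is $n$-divided in $R$, a standard determinant-trick argument puts $(uv)^{n^2 N} \in M$ for suitable $N$, so $(uv)^{\text{big}} \in M$; then apply Theorem~\ref{T3.5}(a)-style reasoning — $M$ being $n$-powerful semiprimary and $x^{\text{big}} y^{\text{big}} \in M$ — to conclude $u^{n}\cdot(\text{power}) \in M$ or $v^n\cdot(\text{power})\in M$, whence by $n$-root-type closure of $\overline{R}$ (note $B \subseteq \overline{R}$ and the set $\{x \in K : x^n \in M\} = \{x \in \overline{R}: x^n \in M\}$, as flagged before the theorem) we get $u^n \in \overline{R}$ or $v^n \in \overline{R}$, and finally $u^n \in B$ or $v^n \in B$ since... — this last step requires $B = \overline{R}$ or a separate argument, so in fact the honest approach is to first prove the theorem for $B = \overline{R}$ (where the powerful/integral interplay is cleanest and gives a PVD, matching the remark after the theorem), and then deduce the general integral overring case by noting $R \subseteq B \subseteq \overline{R}$ and applying Theorem~\ref{S4T0}(b)/Corollary~\ref{S4C1} sandwich-style: $M_B = M_{\overline R} \cap B$, and an $n$-powerful ideal of $\overline R$ contracts to an $n$-powerful ideal of $B$, which by Corollary~\ref{S4C1} makes $B$ an $n$-PVD. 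I expect the write-up to proceed in exactly this order: (i) reduce to $B = \overline R$; (ii) prove $\overline R$ is quasilocal with maximal ideal $\sqrt{M\overline R} = \{x \in \overline R : x^n \in M\}$ using the $n$-divided property; (iii) prove $\sqrt{M\overline R}$ is $n$-powerful in $\overline R$ via Corollary~\ref{S4C4} and the integral-dependence manipulation above; (iv) descend to arbitrary integral overrings $B$ by contraction.
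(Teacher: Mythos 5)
Your final outline hinges on reducing to $B=\overline{R}$ and then ``descending by contraction,'' and that descent step is a genuine gap. If $N=\sqrt{M\overline{R}}$ is an $n$-powerful (semiprimary) ideal of $\overline{R}$, its defining conclusion only places $x^n$ or $y^n$ in $\overline{R}$ (resp.\ in $N$); to make $M_B=N\cap B=\{x\in B\mid x^n\in M\}$ an $n$-powerful semiprimary ideal of $B$ you must land those powers in $B$ (resp.\ in $M_B$), and from $x^n\in\overline{R}$ with $R\subseteq B\subsetneq\overline{R}$ you cannot conclude $x^n\in B$. Neither Theorem~\ref{S4T0}(b) nor Corollary~\ref{S4C1} supplies this. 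The paper avoids the problem by arguing directly in an arbitrary integral overring $B$ and arranging that the relevant power lands in an ideal of $R$ itself, hence automatically in $B$: for $x^ny^n\in\sqrt{mB}$ with $m\in M$, write $(xy)^{nk}=fm$ with $f\in B$, observe $f^{-n}\notin M$ (otherwise $f^n$ would invert an element of $M$, contradicting integrality), apply the $n$-powerful semiprimary property of the prime $\sqrt{mR}$ (prime because the primes of $R$ are linearly ordered) to $f^{-n}(fm)^n=m^n$ to get $f^nm^n\in\sqrt{mR}$, and then again to $(x^{nk})^n(y^{nk})^n=f^nm^n$ to force $x^{nkn}$ or $y^{nkn}$ into $\sqrt{mR}$, finishing with Theorem~\ref{S4T11}. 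This is precisely the mechanism missing from your step (iii): your determinant-trick only shows $(uv)^{n^2}$ is integral over $M$, i.e.\ lies in $\sqrt{M\overline{R}}$, and passing from that to a power lying in $M$ itself is Corollary~\ref{S4T13}, which the paper deduces \emph{from} this theorem, so that route is circular.

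There is a second gap at the start: your claim that going-up/incomparability together with the $n$-divided property forces a single prime of $B$ over each prime of $R$ is unsupported (it is not a general fact about integral extensions of quasilocal divided domains), and you never check that $\{x\in B\mid x^n\in M\}$ is closed under addition, which is not obvious. The paper handles both points concretely: it shows $M_B$ is an ideal by comparing $\sqrt{m_1B}$ and $\sqrt{m_2B}$ via the linear ordering of the primes $\sqrt{m_iR}$ of $R$, proves $M_B$ is $n$-powerful semiprimary (hence prime), notes it is maximal because $M_B\cap R=M$ and $B$ is integral over $R$, and only then concludes $B$ is an $n$-PVD from Corollary~\ref{S4C1}, quasilocality coming for free from Theorem~\ref{S4T2}. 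The ingredients of your proposal that do work --- reducing to one maximal ideal via Corollary~\ref{S4C1}, using Theorem~\ref{S4T11} to come down to $n$-th powers, and the remark that $\{x\in K\mid x^n\in M\}\subseteq\overline{R}$ --- are all present in the paper, but as proposed the argument does not close.
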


\begin{proof}
Let $m \in M$. Then $\sqrt{mR}$ is a prime ideal of $R$ since the prime ideals of $R$ are linearly ordered (under inclusion) by Corollary~\ref{S4C2}, and thus $\sqrt{mR}$ is an $n$-powerful semiprimary ideal of $R$ since $R$ is an $n$-PVD. We show that $\sqrt{mB}$ is an $n$-powerful semiprimary ideal of $B$ and $\sqrt{mB} = \{ x \in B \mid x^n \in \sqrt{mR} \}$. Let $x^ny^n \in \sqrt{mB}$ for $0 \neq x, y \in K$. Then $x^{nk}y^{nk} = (xy)^{nk} = fm$ for some positive integer $k$ and $0 \neq f \in B$. Note that $f^{-n} \not \in M$; for if $f^{-n} \in M$, then $1/a = f^n \in B$ for some $a \in M$, a contradiction since $B$ is integral over $R$. Then $f^nm^n \in \sqrt{mR}$ since $f^{-n}(fm)^n = m^n \in \sqrt{mR}$, $f^{-n} \not \in \sqrt{mR} \subseteq M$, and $\sqrt{mR}$ is an $n$-powerful semiprimary ideal of $R$. Thus $(x^{nk})^n(y^{nk})^n = (xy)^{nkn} = f^nm^n \in \sqrt{mR}$; so $x^{nkn} \in \sqrt{mR} \subseteq \sqrt{mB}$ or $y^{nkn} \in \sqrt{mR} \subseteq \sqrt{mB}$. Hence $x^n \in \sqrt{mR} \subseteq \sqrt{mB}$ or $y^n \in  \sqrt{mR} \subseteq \sqrt{mB}$ by Theorem~\ref{S4T11}. Thus $\sqrt{mB}$ is an $n$-powerful semiprimary ideal of $B$, and hence a prime ideal of $B$ by Theorem~\ref{T1}. A slight modification of the above proof also shows that $\sqrt{mB} = \{ x \in B \mid x^n \in \sqrt{mR} \}$.

We next show that $M_B = \{x \in B \mid x^n \in M\}$ is an $n$-powerful semiprimary ideal of $B$.  First, we show that $M_B$ is an ideal of $B$. Let $x_1, x_2 \in M_B$; so $x_1^n = m_1 \in M$ and $x_2^n = m_2 \in M$. Thus $x_1 \in \sqrt{m_1B}$ and $x_2 \in \sqrt{m_2B}$. Since the prime ideals of $R$ are linearly ordered, we may assume that $\sqrt{m_1R} \subseteq \sqrt{m_2R}$, and hence $\sqrt{m_1B} \subseteq \sqrt{m_2B}$. Thus $x_1 + x_2 \in \sqrt{m_2B} = \{ x \in B \mid x^n \in \sqrt{m_2R} \} \subseteq M_B$. Next, let $x \in M_B$ and $y \in B$. Then $x^n = m_3 \in M$; so $x \in \sqrt{m_3B}$. Thus $xy \in \sqrt{m_3B} \subseteq M_B$; so $M_B$ is an ideal of $B$.
A similar argument to that for $\sqrt{mB}$ above shows that if $x^ny^n \in M_B$ for $0 \neq x, y \in K$, then $x^n \in \sqrt{mR} \subseteq M \subseteq M_B$ or $y^n \in \sqrt{mR} \subseteq M \subseteq M_B$. Hence $M_B$ is an $n$-powerful semiprimary ideal of $B$, and thus $M_B$ is a prime ideal of $B$ since it is a radical ideal of $B$ by Theorem~\ref{S4T11}. Hence $M_B$ is a maximal ideal of $B$ since $B$ is integral over $R$ and $M_B \cap R = M$; so $B$ is an $n$-PVD by Corollary~\ref{S4C1}. Clearly $M_B = \{x \in B \mid x^n \in M\} \subseteq \sqrt{MB}$, and $\sqrt{MB} \subseteq M_B$ since $MB \subsetneq B$ as $B$ is integral over $R$. Thus $M_B = \sqrt{MB}$.
\end{proof}

\begin{cor}\label{S4T13} 
Let $R$ be an $n$-PVD with maximal ideal $M$ and quotient field $K$. Then $\overline{R}$ is a PVD {\rm (}$1$-PVD{\rm )} with maximal ideal $\sqrt{M\overline{R}} = \{x \in K \mid x^n \in M\}$.
\end{cor}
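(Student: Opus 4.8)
The plan is to deduce this directly from Theorem~\ref{S4T12} together with Theorem~\ref{S4T7}. First I would observe that $\overline{R}$, the integral closure of $R$ in $K$, is an integral overring of $R$. Hence Theorem~\ref{S4T12}, applied with $B = \overline{R}$, immediately gives that $\overline{R}$ is an $n$-PVD with maximal ideal $M_{\overline{R}} = \sqrt{M\overline{R}} = \{ x \in \overline{R} \mid x^n \in M \}$.

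The next step is to upgrade the description of this maximal ideal from $\{ x \in \overline{R} \mid x^n \in M \}$ to $\{ x \in K \mid x^n \in M \}$. For this I would note that if $x \in K$ satisfies $x^n \in M \subseteq R$, then $x$ is a root of the monic polynomial $T^n - x^n \in R[T]$, so $x$ is integral over $R$, i.e. $x \in \overline{R}$. Thus the two sets coincide, and $\sqrt{M\overline{R}} = \{ x \in K \mid x^n \in M \}$ as claimed.

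Finally, since $\overline{R}$ is integrally closed, it is root closed, hence in particular $n$-root closed. Being an $n$-root closed $n$-PVD, Theorem~\ref{S4T7} forces $\overline{R}$ to be a PVD (a $1$-PVD). I do not anticipate any real obstacle here: the content has already been done in Theorem~\ref{S4T12} and Theorem~\ref{S4T7}, and the only minor point to check is the set-theoretic identity above, which is the elementary observation that membership in $\overline{R}$ is automatic once $x^n$ lands in $R$.
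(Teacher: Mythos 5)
Your proposal is correct and follows essentially the same route as the paper: apply Theorem~\ref{S4T12} to the integral overring $\overline{R}$, identify $\{x \in \overline{R} \mid x^n \in M\}$ with $\{x \in K \mid x^n \in M\}$ (the paper records this identification just before Theorem~\ref{S4T12}, and your monic-polynomial argument is exactly the justification), and then invoke Theorem~\ref{S4T7} since $\overline{R}$ is integrally closed, hence $n$-root closed.
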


\begin{proof}
By Theorem \ref{S4T12}, $\overline{R}$ is an $n$-PVD with maximal ideal $\sqrt{M\overline{R}} = 
M_{\overline{R}} = \{ x \in \overline{R} \mid x^n \in M \} = \{ x \in K \mid x^n \in M \}$. Thus $\overline{R}$ is a PVD by Theorem \ref{S4T7}.
\end{proof}

\begin{cor}
Let $P$  be a nonzero finitely generated prime ideal of an $n$-PVD $R$. Then $W = (P : P)$ is an $n$-PVD with maximal ideal $\sqrt{MW} = \{ x \in W \mid x^n \in M \}$. In particular, if $R$ is a Noetherian $n$-PVD with maximal ideal $M$, then $(M : M)$ is an $n$-PVD. 
\end{cor}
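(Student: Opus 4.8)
The plan is to recognize $W=(P:P)$ as an \emph{integral} overring of $R$ and then quote Theorem~\ref{S4T12}. First note that, by Theorem~\ref{S4T2}, the $n$-PVD $R$ is quasilocal, so ``the'' maximal ideal $M$ is well defined, and that $W=(P:P)=\{x\in K\mid xP\subseteq P\}$ is an overring of $R$ (as recorded in the Introduction, $K$ being the quotient field of $R$). Since $R\subseteq W\subseteq K$, the only thing needing proof beyond an appeal to Theorem~\ref{S4T12} is that $W\subseteq\overline{R}$.

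I would establish this integrality by the usual determinant trick. Write $P=(a_1,\dots,a_k)R$; since $P\neq\{0\}$, some $a_{i_0}\neq 0$. Fix $x\in W$. For each $i$ we have $xa_i\in P$, so there are $c_{ij}\in R$ with $xa_i=\sum_{j=1}^{k}c_{ij}a_j$ for all $i$; equivalently $(xI_k-C)(a_1,\dots,a_k)^{T}=0$, where $C=(c_{ij})$ and $I_k$ is the identity matrix. Multiplying on the left by the adjugate of $xI_k-C$ yields $\det(xI_k-C)\,a_i=0$ for every $i$. Taking $i=i_0$ and using that $R$ is a domain with $a_{i_0}\neq 0$, we get $\det(xI_k-C)=0$. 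As a polynomial in $x$, $\det(xI_k-C)$ is monic of degree $k$ with coefficients in $R$, so $x$ is integral over $R$. Since $x\in W$ was arbitrary, $W\subseteq\overline{R}$; that is, $W$ is an integral overring of $R$.

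Now Theorem~\ref{S4T12}, applied with $B=W$, gives immediately that $W$ is an $n$-PVD with maximal ideal $\sqrt{MW}=\{x\in W\mid x^n\in M\}$, which is the assertion. For the ``in particular'' statement, let $R$ be a Noetherian $n$-PVD with maximal ideal $M$: if $M=\{0\}$ then $R$ is a field and $(M:M)=K=R$ is (trivially) an $n$-PVD, while if $M\neq\{0\}$ then $M$ is a nonzero finitely generated prime ideal (since $R$ is Noetherian), so the first part applies with $P=M$.

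There is no real obstacle here: all the substance is carried by Theorem~\ref{S4T12}, and the single computation one must do — that $(P:P)$ is integral over $R$ because $P$ is a faithful finitely generated $R$-module stable under multiplication by the elements of $(P:P)$ — is exactly the Cayley--Hamilton argument sketched above. The finite generation of $P$ is precisely what makes this go through; primeness of $P$ is not needed here beyond matching the statement (and is automatic in the Noetherian application, where $P=M$).
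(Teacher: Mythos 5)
Your proposal is correct and follows exactly the paper's route: the paper likewise observes that $(P:P)$ is integral over $R$ because $P$ is finitely generated and then applies Theorem~\ref{S4T12}; you merely spell out the standard determinant (Cayley--Hamilton) argument for that integrality and handle the trivial $M=\{0\}$ case explicitly.
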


\begin{proof}
Note that $W = (P : P)$ is integral over $R$ since $P$ is finitely generated. Thus $W$ is an $n$-PVD with maximal ideal $\sqrt{MW} = \{ x \in W \mid x^n \in M \}$ by 
Theorem~\ref{S4T12}. The ``in particular'' statement is clear. (However, recall that a Noetherian $n$-PVD $R$ has $dim(R) \leq 1$ by Corollary~\ref{S4C2}).
\end{proof}

The converse of Corollary~\ref{S4T13} also holds.

\begin{thm} \label{S4T14}
Let $R$ be a quasilocal integral domain with maximal ideal $M$ and quotient field $K$. Then $R$ is an $n$-PVD  if and only if $\overline{R}$ is a PVD with maximal ideal  $\sqrt{M\overline{R}} = \{x \in K \mid  x^n \in M\}$. 
\end{thm}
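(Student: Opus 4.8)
The forward implication is exactly Corollary~\ref{S4T13}, so the plan is to prove the converse: assuming $\overline{R}$ is a PVD whose maximal ideal is $N = \sqrt{M\overline{R}} = \{x \in K \mid x^n \in M\}$, I would show that $R$ is an $n$-PVD. Since $R$ is quasilocal with maximal ideal $M$, Corollary~\ref{S4C1} reduces this to showing that $M$ is an $n$-powerful ideal of $R$, i.e., that $x^ny^n \in M$ for $x,y \in K$ forces $x^n \in R$ or $y^n \in R$.

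The key step is that the explicit description of $N$ turns the $n$-th-power condition into a plain membership statement: if $x^ny^n \in M$, then $(xy)^n = x^ny^n \in M$, so $xy \in N$. Now $N$ is the maximal ideal of the PVD $\overline{R}$, hence a strongly prime ideal of $\overline{R}$, and $\overline{R}$ has the same quotient field $K$ as $R$. Applying strong primeness to $xy \in N$ with $x,y \in K$ gives $x \in N$ or $y \in N$; unwinding the description of $N$ once more yields $x^n \in M$ or $y^n \in M$, and since $M \subseteq R$ we obtain $x^n \in R$ or $y^n \in R$. Thus $M$ is an $n$-powerful ideal of $R$, so $R$ is an $n$-PVD. (One actually gets that $M$ is $n$-powerful semiprimary, which is consistent with Corollary~\ref{S4C1}.)

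I do not expect a genuine technical obstacle here: the whole argument is a translation between ``$x^n \in M$'' and ``$x \in N$'', made possible precisely because the hypothesis hands us the maximal ideal of $\overline{R}$ in the form $\{x \in K \mid x^n \in M\}$. The only points to keep straight are that $M$ is a proper ideal of $R$, so that ``$n$-powerful ideal'' is meaningful (immediate, since $M$ is maximal), and that the strong primeness of $N$ is to be tested against the quotient field $K$, which is legitimate because $\overline{R}$ is an overring of $R$ with the same quotient field.
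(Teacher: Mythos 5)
Your proposal is correct and follows essentially the same route as the paper: the forward direction is Corollary~\ref{S4T13}, and for the converse both you and the paper take $x^ny^n=(xy)^n\in M$, deduce $xy\in N$ from the explicit description $N=\{x\in K\mid x^n\in M\}$, apply strong primeness of $N$ in $\overline{R}$ (whose quotient field is $K$), and conclude $x^n\in M$ or $y^n\in M$, so that $M$ is an $n$-powerful semiprimary ideal and $R$ is an $n$-PVD by Corollary~\ref{S4C1}.
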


\begin{proof}
Let $R$ be an $n$-PVD. Then $\overline{R}$ is a PVD with maximal ideal  $\sqrt{M\overline{R}} = \{x \in K \mid  x^n \in M\}$ by Corollary~\ref{S4T13}. Conversely, suppose that $\overline{R}$ is a PVD with maximal ideal  $N = \sqrt{M\overline{R}} = \{x \in K \mid  x^n \in M\}$. Then $M = R \cap N$ since $M \subseteq N$. Let $x^ny^n = (xy)^n \in M$ for $x, y \in K$; so $xy \in N$. Thus $x \in N$ or $y \in N$ since $N$ is a strongly prime ideal of $\overline{R}$. Hence $x^n \in M$ or $y^n \in M$; so $M$ is an $n$-powerful semiprimary ideal of $R$. Thus $R$ is an $n$-PVD  by Corollary~\ref{S4C1}. 
\end{proof}

\begin{cor}\label{S4C5}
Let $R$ be a quasilocal integral domain with maximal ideal $M$ and  
quotient field $K$. Then the following statements are equivalent.
\begin{enumerate}
\item $R$ is an $n$-PVD.
\item $\overline{R}$ is a PVD with maximal ideal  $\sqrt{M\overline{R}} = \{x \in K \mid  x^n \in M\}$.
\item $N = \sqrt{M\overline{R}} = \{x \in K \mid  x^n \in M\}$ is a maximal ideal of $\overline{R}$ 
such that $(N : N)$ is a valuation domain with maximal ideal $N$.
\end{enumerate}
\end{cor}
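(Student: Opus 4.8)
The plan is to read off $(1)\Leftrightarrow(2)$ directly from Theorem~\ref{S4T14}, so that the whole problem reduces to proving $(2)\Leftrightarrow(3)$. The engine for this reduction is the conductor characterization of PVDs: a quasilocal domain $D$ with maximal ideal $Q$ is a PVD if and only if $(Q:Q)$ is a valuation domain with maximal ideal $Q$ (\cite[Proposition 2.5]{AD}). I would apply this with $D=\overline{R}$ and $Q=N=\sqrt{M\overline{R}}=\{x\in K\mid x^n\in M\}$. The one point that needs care is that $(3)$ only asserts that $N$ is \emph{a} maximal ideal of $\overline{R}$, whereas the cited characterization requires $\overline{R}$ to be quasilocal with $N$ as its (unique) maximal ideal.

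For $(2)\Rightarrow(3)$: if $\overline{R}$ is a PVD with maximal ideal $N$, then $\overline{R}$ is automatically quasilocal (a PVD is quasilocal, \cite[Corollary 1.3]{HH1}), so \cite[Proposition 2.5]{AD} immediately yields that $(N:N)$ is a valuation domain with maximal ideal $N$, which is exactly $(3)$. For $(3)\Rightarrow(2)$: put $V=(N:N)$, a valuation domain with maximal ideal $N$; since $N$ is an ideal of $\overline{R}$ we have $N\subseteq\overline{R}\subseteq V$, and $N$ is a common ideal of $\overline{R}$ and $V$. I would first show $\overline{R}$ is quasilocal with maximal ideal $N$: given $x\in\overline{R}\setminus N$, the image $\bar x$ is a nonzero element of the field $\overline{R}/N$, so there is $y\in\overline{R}$ with $xy=1+m$ for some $m\in N$; since $V$ is quasilocal with maximal ideal $N$, the element $1+m$ has an inverse $z\in V$, and from $z=1-zm$ together with $zm\in VN\subseteq N\subseteq\overline{R}$ we get $z\in\overline{R}$, so $1+m$, hence $x$, is a unit of $\overline{R}$. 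Thus $\overline{R}$ is quasilocal with maximal ideal $N$, and now \cite[Proposition 2.5]{AD} applied to $\overline{R}$ with $(N:N)=V$ shows $\overline{R}$ is a PVD with maximal ideal $N=\sqrt{M\overline{R}}=\{x\in K\mid x^n\in M\}$, which is $(2)$.

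The main obstacle is precisely this promotion of "$N$ is a maximal ideal of $\overline{R}$" to "$\overline{R}$ is quasilocal with maximal ideal $N$" in the direction $(3)\Rightarrow(2)$; everything else is a direct appeal to Theorem~\ref{S4T14} and \cite[Proposition 2.5]{AD}. It is also worth a quick sanity check that the descriptions $\sqrt{M\overline{R}}=\{x\in K\mid x^n\in M\}$ appearing in all three conditions are the same one already established in Corollary~\ref{S4T13} and used in Theorem~\ref{S4T14}, so no new identification of the maximal ideal is needed.
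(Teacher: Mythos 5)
Your proposal is correct and follows the paper's own route exactly: $(1)\Leftrightarrow(2)$ is read off from Theorem~\ref{S4T14}, and $(2)\Leftrightarrow(3)$ comes from the characterization of PVDs in \cite[Proposition 2.5]{AD}, which is precisely what the paper cites. Your extra argument promoting ``$N$ is a maximal ideal of $\overline{R}$'' to ``$\overline{R}$ is quasilocal with maximal ideal $N$'' (using that $N$ is a common ideal of $\overline{R}$ and $V=(N:N)$ and that $V$ is quasilocal) is a sound filling-in of the step the paper dismisses as clear.
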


\begin{proof}
 {$(1) \Leftrightarrow(2)$}  is  Theorem~\ref{S4T14}, and {$(2) \Leftrightarrow(3)$}  is clear by \cite[Proposition 2.5]{AD}.
\end{proof}

We have seen that integral overrings of an $n$-PVD are also $n$-PVDs. We next determine when every overring of an $n$-PVD is an $n$-PVD. Note that an integrally closed PVD need not be a valuation domain. For example, $R = \mathbb{Q} + X\mathbb{C}[[X]]$ is a PVD, and $\overline{R} = \overline{\mathbb{Q}} + X\mathbb{C}[[X]]$ is a PVD, but not a valuation domain, where $\overline{\mathbb{Q}}$ is the algebraic closure of $\mathbb{Q}$. In this case, $\mathbb{Q}[\pi] + X\mathbb{C}[[X]]$ is a (non-integral) overring of $R$ which is not an $n$-VD or $n$-PVD for any positive integer $n$.
\begin{thm}\label{S4T15}
Let $R$ be an $n$-PVD with maximal ideal $M$. Then every overring of $R$ is an $n$-PVD if and only if $\overline{R}$ is a valuation domain. Moreover, if $\overline{R}$ is a valuation domain, then every non-integral overring of $R$ is an $n$-VD.
\end{thm}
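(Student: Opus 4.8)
The plan is to reduce everything to the known structure of $\overline{R}$. By Corollary~\ref{S4T13} and Corollary~\ref{S4C5}, $\overline{R}$ is a PVD with maximal ideal $N = \sqrt{M\overline{R}} = \{x \in K \mid x^n \in M\}$, and its associated valuation overring $V := (N : N)$ is a valuation domain with maximal ideal $N$. One has $\overline{R} \subseteq V$, with equality precisely when $\overline{R}$ is itself a valuation domain. I would prove the two implications separately, and the ``moreover'' statement will drop out of the proof of ($\Leftarrow$).

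\textbf{($\Leftarrow$) and the ``moreover'' statement.} Assume $\overline{R} = V$ is a valuation domain and let $T$ be an overring of $R$. If $T$ is integral over $R$, then $T \subseteq \overline{R}$ and $T$ is an $n$-PVD by Theorem~\ref{S4T12}. If $T$ is not integral over $R$, choose $t \in T \setminus \overline{R} = T \setminus V$; since $V$ is a valuation domain with maximal ideal $N$, the element $t^{-1}$ is a nonzero nonunit of $V$, so $t^{-1} \in N$, and therefore $s := t^{-n} = (t^{-1})^{n} \in M$ by the description of $N$. Then $s$ is a nonzero element of $M$ with $1/s = t^{n} \in T$, so Theorem~\ref{S4T8} shows $T$ is an $n$-VD, hence an $n$-PVD. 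This yields both that every overring of $R$ is an $n$-PVD and that every non-integral overring of $R$ is an $n$-VD.

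\textbf{($\Rightarrow$).} Suppose every overring of $R$ is an $n$-PVD but, for contradiction, that $\overline{R}$ is not a valuation domain, so $\overline{R} \subsetneq V$. Pick $c \in V \setminus \overline{R}$ and set $T := \overline{R}[c]$, an overring of $R$. Since $N$ is an ideal of $V$ and $N \subseteq \overline{R} \subseteq T \subseteq V$, it is a common ideal of $\overline{R}$, $T$, and $V$, and passing to residues gives fields $F := \overline{R}/N \subseteq L := V/N$ with $T/N = F[\overline{c}]$, where $\overline{c} = c + N$. As $\overline{R}$ is integrally closed in $K$, $c$ is not integral over $\overline{R}$; using the common ideal $N$, this forces $\overline{c}$ to be transcendental over $F$. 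Hence $T/N \cong F[Y]$ is a polynomial ring over a field, so it has infinitely many maximal ideals, each of which pulls back to a maximal ideal of $T$. Thus $T$ has infinitely many maximal ideals and is not quasilocal, contradicting the fact that an $n$-PVD is quasilocal (Theorem~\ref{S4T2}). Therefore $\overline{R}$ is a valuation domain.

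\textbf{Main obstacle.} The step needing care is ``$c$ not integral over $\overline{R}$ $\Rightarrow$ $\overline{c}$ transcendental over $F$'': if $\overline{c}^{\,m} + \overline{a}_{m-1}\overline{c}^{\,m-1} + \cdots + \overline{a}_0 = 0$ in $L$ with $\overline{a}_i \in F$, then lifting the $\overline{a}_i$ to $a_i \in \overline{R}$ shows $c^{m} + a_{m-1}c^{m-1} + \cdots + a_0 \in N \subseteq \overline{R}$, so $c$ is integral over $\overline{R}$ and hence $c \in \overline{R}$, a contradiction. Granting this, the rest — identifying $V$ via Corollaries~\ref{S4T13} and~\ref{S4C5}, the identification $T/N = F[\overline{c}]$, and the lifting of maximal ideals from $T/N$ to $T$ — is routine, and the argument parallels the classical PVD fact that every overring of a PVD is a PVD exactly when its integral closure is a valuation domain.
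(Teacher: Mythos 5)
Your proof is correct, and the backward direction together with the ``moreover'' statement is essentially the paper's argument: for a non-integral overring $T$, pick $t \in T \setminus \overline{R}$, note $t^{-1} \in N = \{x \in K \mid x^n \in M\}$ so $t^{-n} \in M$ by Corollary~\ref{S4T13}, and invoke Theorem~\ref{S4T8}; integral overrings are handled by Theorem~\ref{S4T12}. The only real difference is in the forward direction: the paper simply cites the proof of Proposition 2.7 of Hedstrom--Houston to produce a non-quasilocal overring of the PVD $\overline{R}$ when it is not a valuation domain, and then contradicts Theorem~\ref{S4T2}, whereas you make that step self-contained: using Corollary~\ref{S4C5} to get the valuation domain $V = (N:N)$ with maximal ideal $N$, you take $c \in V \setminus \overline{R}$, form $T = \overline{R}[c]$, use the common ideal $N$ and the integral closedness of $\overline{R}$ to see that $\overline{c}$ is transcendental over $F = \overline{R}/N$ (any algebraic relation over the field $F$ can be made monic and lifted, forcing $c \in \overline{R}$), so $T/N \cong F[Y]$ has infinitely many maximal ideals and $T$ is not quasilocal. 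This explicit pullback-and-transcendence argument is exactly the content of the cited Hedstrom--Houston proof, so the two routes are the same in substance; what your write-up buys is independence from the external reference, at the cost of a little extra verification (that $N$ is a proper common ideal of $\overline{R}$, $T$, and $V$, and that maximal ideals of $T/N$ lift to $T$), all of which you handle correctly.
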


\begin{proof}
Suppose that every overring of $R$ is an $n$-PVD. Since $\overline{R}$ is a PVD by Theorem~\ref{S4T7}, the proof of \cite[Proposition 2.7]{HH2} shows that if $\overline{R}$ is not a valuation domain, then there is a non-quasilocal overring $B$ of $\overline{R}$ (and hence $B$ is an overring of $R$). Thus $B$ cannot be an $n$-PVD by Theorem \ref{S4T2}; so $\overline{R}$ is a valuation domain.

Conversely, suppose that $\overline{R}$ is a valuation domain with maximal ideal $N$. Let $B$ be an overring of $R$. If $B$ is integral over $R$, then $B$ is an $n$-PVD by Theorem~\ref{S4T12}; so assume that $B$ is not integral over $R$. Let $b \in B\setminus \overline{R}$. Then $b^{-1} \in N$ since $\overline{R}$ is a valuation domain; so $m = b^{-n}  = (b ^{-1})^n \in M$ by Corollary~\ref{S4T13} since $\overline{R}$ is a valuation domain (and thus a PVD). Hence $1/m = b^n \in B$; so $B$ is an $n$-VD, and thus an $n$-PVD,  by Theorem~\ref{S4T8}.
The ``moreover'' statement is clear.
\end{proof}

 Let $R$ be a $1$-PVD (i.e., PVD) and $P$ a prime ideal of $R$. Then $A_1(P) = P$; so $V = (A_1(P) : A_1(P)) = (P : P)$ is a $1$-VD (i.e., valuation domain) by \cite[Proposition 4.3]{A2}, and it is easily checked that $P$ is the maximal ideal of $V$. We have the following analogous result for $n$-PVDs. 

 \begin{thm} \label{thm77}
 Let $R$ is an $n$-PVD, $P$  a prime ideal of $R$, and $I = (A_n(P))$. Then $V = (I : I)$ is an $n$-VD with maximal ideal $\sqrt{IV} = \{x \in V \mid x^n \in I\}$. Moreover, $\sqrt{IV} = \{ x \in V \mid x^n \in P \} = \sqrt{PV}$.
 \end{thm}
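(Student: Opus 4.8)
The plan is to first verify that $V=(I:I)$ meets the $n$-VD criterion of Theorem~\ref{nvd}(a), and then to identify its maximal ideal with $\{x\in V\mid x^n\in I\}$. Since $V$ is an overring of $R$, for the first part it suffices to show that for each $0\neq x\in K$ either $x^n\in V$ or $x^{-n}\in V$. If $x^n\in R$, then $x^nI\subseteq I$ because $I$ is an ideal of $R$, so $x^n\in V$. If $x^n\notin R$, then $x\in E_n(R)\subseteq E_n(P)$, and since $P$ is an $n$-powerful semiprimary ideal of $R$ (as $R$ is an $n$-PVD), Theorem~\ref{S4T5} gives $x^{-n}d\in P$ for every $d\in A_n(P)$; writing $d=b^n$ with $b^n\in P$ shows $x^{-n}d=(x^{-1}b)^n\in A_n(P)\subseteq I$, and since $I$ is generated by $A_n(P)$ this forces $x^{-n}I\subseteq I$, i.e., $x^{-n}\in V$. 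Hence $V$ is an $n$-VD, and in particular quasilocal; let $\mathfrak m$ be its maximal ideal.

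Next I would identify $\mathfrak m$. For $x\in V$ one has $x\in U(V)\iff x^n\in U(V)$ (the nontrivial direction via $x^{-1}=x^{n-1}(x^n)^{-1}$), so $\mathfrak m=\{x\in V\mid x^n\in\mathfrak m\}$; since $IV=I\subsetneq V$ we get $I\subseteq\mathfrak m$, hence $N:=\{x\in V\mid x^n\in I\}\subseteq\mathfrak m$. For the reverse inclusion take $0\neq x\in\mathfrak m$, and observe: (i) $x^n\in R$, since otherwise $x\in E_n(P)$ and the computation above gives $x^{-n}\in V$, whence $x^n\in U(V)$ and $x\in U(V)$, a contradiction; and (ii) $\mathfrak m\cap R=P$, since $P\subseteq\mathfrak m\cap R$ (a nonzero $p\in P$ has $p^n\in A_n(P)\subseteq I$, so $p\in U(V)$ would give $1=p^{-n}p^n\in I$), while $r\in(\mathfrak m\cap R)\setminus P$ would mean $r\in E_n(P)$, so Theorem~\ref{S4T5} again yields $r^{-n}\in V$, contradicting the fact that $r\in\mathfrak m$ forces $r^{-n}\notin V$. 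From (i) and (ii), $x^n\in\mathfrak m\cap R=P$, so $x^n\in A_n(P)\subseteq I$ and $x\in N$; thus $\mathfrak m=N$. Finally $\sqrt{IV}=N$ as well: $N\subseteq\sqrt{IV}$ is immediate, and if $x^m\in I\subseteq P$ for some $m\geq 1$ then $x^n\in P$ by Theorem~\ref{S4T11}, so $x^n\in A_n(P)\subseteq I$ and $x\in N$.

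For the ``moreover'' statement, $\{x\in V\mid x^n\in I\}=\{x\in V\mid x^n\in P\}$ since $I\subseteq P$ and, conversely, $x^n\in P$ forces $x^n\in A_n(P)\subseteq I$; and $\sqrt{IV}=\sqrt{PV}$ since $I\subseteq P$ gives one containment while $P\subseteq\mathfrak m$ gives $PV\subseteq\mathfrak m$, hence $\sqrt{PV}\subseteq\sqrt{\mathfrak m}=\mathfrak m=\sqrt{IV}$. I expect the step requiring the most care to be (ii), i.e., showing that $\mathfrak m$ contracts to $P$ rather than to some larger prime such as $M$; this is exactly where the $n$-powerful semiprimary hypothesis on $P$ is used (via Theorem~\ref{S4T5}), and it is also what forces $x^n\in R$ for $x\in\mathfrak m$ in step (i).
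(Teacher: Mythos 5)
Your proposal is correct and follows essentially the same route as the paper: the heart of both arguments is that $x\in E_n(P)$ forces $x^{-n}A_n(P)\subseteq A_n(P)$ (Theorem~\ref{S4T5} together with $x^{-n}d$ being itself an $n$th power), hence $x^{-n}I\subseteq I$ and $x^{-n}\in V$, used once to get the $n$-VD property and once to pin down the maximal ideal. The only difference is cosmetic: you identify the maximal ideal via the contraction $\mathfrak m\cap R=P$ and invoke Theorem~\ref{S4T11} for $\sqrt{IV}$, whereas the paper argues directly that any $y$ in the maximal ideal with $y^n\notin I$ would satisfy $y^n\notin P$ and hence be a unit, and gets $\sqrt{IV}\subseteq N_V$ simply from $IV=I\subsetneq V$.
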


 \begin{proof}
 Let $x \in K$ with $x^n \not \in V$. Then $x^n \not \in P$; so $x^{-n}I \subseteq I$ by Corollary~\ref{S4C3}. Thus $x^{-n} \in V$; so $V$ is an $n$-VD with maximal ideal $N_V$. Let $y \in N_V$. Assume that $y^n \not \in I$; so $y^n \not \in P$. Thus $y^{-n}I \subseteq I$ by Corollary~\ref{S4C3} again; so $y^{-n} \in V$. Hence $y \in U(V)$, a contradiction. Thus $N_V \subseteq \{ x \in V \mid x^n \in I \} \subseteq \sqrt{IV}$. Also, $IV = I  \subsetneq V$; so $\sqrt{IV} \subseteq N_V$. Hence $N_V = \sqrt{IV} = \{x \in V \mid x^n \in I\}$. Clearly $\{ x \in V \mid x^n \in I \} \subseteq \{ x \in V \mid x^n \in P \}$ since $I \subseteq P$. Also, $x^n \in P$ for $x \in V \Rightarrow x^n \in A_n(P)$; so $\{ x \in V \mid x^n \in P \} \subseteq \{ x \in V \mid x^n \in I \}$. Thus $\{ x \in V \mid x^n \in I \} = \{ x \in V \mid x^n \in P \}$. Clearly $x \in P \Rightarrow x^n \in A_n(P) \subseteq I  \Rightarrow x^n \in \sqrt{IV}$; so $P \subseteq \sqrt{IV}$, and hence $\sqrt{PV} \subseteq \sqrt{IV}$. Also, $\sqrt{IV} \subseteq \sqrt{PV}$ since $I \subseteq P$; so $\sqrt{IV}  = \sqrt{PV}$.
 \end{proof}

Recall that a quasilocal integral domain $R$ with maximal ideal $M$ is a PVD if and only if $(M : M)$ is a valuation domain with maximal ideal $M$ \cite[Proposition 2.5]{AD}.  Example~\ref{ex33}(c) below shows that if $R$ is an $n$-PVD with maximal ideal $M$, then $(M : M)$ need not be an $n$-VD. And Example~\ref{ex33}(d)(e) shows that $V = (M : M)$ may be an $n$-VD with maximal ideal $\sqrt{MV} = \{ x \in V \mid x^n \in M \}$ when $R$ is not an $n$-PVD. However, since $M = A_1(M)$, the next theorem may be viewed as the $n$-PVD analog. By adding the extra condition ``$(*)_n$: if $x \in K$ is a nonunit of $\overline{R}$, then $x^n \in M$,''  we get a converse to Theorem ~\ref{thm77}. Note that $I = (A_n(M)) \subsetneq M$ in general (see Example~\ref{ex33}(a)(b)). 

\begin{thm} \label{S4T16}
Let $R$ be a quasilocal integral domain with maximal ideal $M$, quotient field $K$, and $I = (A_n(M))$. Then the following statements are equivalent.
\begin{enumerate}
\item $R$ is an $n$-PVD.
\item $V = (I : I)$ is an $n$-VD with maximal ideal $\sqrt{MV}  = \{x \in V \mid x^n \in M\}$, and if $x \in K$ is a nonunit of $\overline{R}$, then $x^n \in M$.
\end{enumerate}
\end{thm}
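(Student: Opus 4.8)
The plan is to treat the two implications separately. Direction $(1)\Rightarrow(2)$ will be a short repackaging of earlier results, while $(2)\Rightarrow(1)$ carries the real content; for the latter, by Corollary~\ref{S4C1} it suffices (since $R$ is quasilocal with maximal ideal $M$) to prove that $M$ is an $n$-powerful semiprimary ideal of $R$.

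For $(1)\Rightarrow(2)$, I would apply Theorem~\ref{thm77} with the prime ideal $P=M$: since $I=(A_n(M))$, that theorem yields immediately that $V=(I:I)$ is an $n$-VD with maximal ideal $\sqrt{IV}=\{x\in V\mid x^n\in I\}=\{x\in V\mid x^n\in M\}=\sqrt{MV}$, which is precisely the first clause of (2). For the second clause, invoke Corollary~\ref{S4T13}: $\overline R$ is a PVD with maximal ideal $\sqrt{M\overline R}=\{x\in K\mid x^n\in M\}$. Since a PVD is quasilocal, every nonunit of $\overline R$ lies in this maximal ideal, so $x^n\in M$ whenever $x\in K$ is a nonunit of $\overline R$.

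For $(2)\Rightarrow(1)$, suppose $x^ny^n\in M$ for $x,y\in K$; we may assume $x,y\neq 0$. Then $(xy)^n=x^ny^n\in M$, so $(xy)^n\in A_n(M)\subseteq I$, and $I$ is an ideal of $V=(I:I)$ (so $vI\subseteq I$ for every $v\in V$). Since $V$ is an $n$-VD, one of $(x/y)^n$ and $(y/x)^n$ lies in $V$; multiplying $(xy)^n\in I$ by that element keeps us inside $I\subseteq M$, so after possibly interchanging $x$ and $y$ we obtain $x^{2n}\in M$. It then remains to deduce $x^n\in M$ from $x^{2n}\in M$, and here I would use the ``nonunit'' hypothesis of (2) applied to $x$ itself: because $\overline R$ is integrally closed in $K$, it is root closed, so $x^{2n}\in M\subseteq\overline R$ forces $x\in\overline R$; moreover no nonzero nonunit of $R$ is a unit of $\overline R$ (if $1/m$ were integral over $R$, clearing denominators would put $1/m$ in $R$), so $x^{2n}\in M$ forces $x$ to be a nonunit of $\overline R$, whence $x^n\in M$ by the hypothesis. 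Thus $M$ is $n$-powerful semiprimary, and $R$ is an $n$-PVD by Corollary~\ref{S4C1}.

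The hard part is precisely this last bridge $x^{2n}\in M\Rightarrow x^n\in M$: applying the ``nonunit'' condition (or Corollary~\ref{S4C4}) to $x^n$ only gives $x^{n^2}\in M$, which is too weak, and the extra hypothesis is genuinely needed since without it the converse of Theorem~\ref{thm77} fails (as noted just before that theorem). The resolution is to observe that root-closedness of the integral closure already pulls $x$ (not merely $x^n$) into $\overline R$, so that the hypothesis can be invoked on $x$ directly.
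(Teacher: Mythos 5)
Your proof is correct, and while the $(1)\Rightarrow(2)$ direction coincides with the paper's (Theorem~\ref{thm77} applied with $P=M$, plus Corollary~\ref{S4T13} and quasilocality of a PVD for the nonunit clause), your $(2)\Rightarrow(1)$ argument takes a genuinely different route. The paper reduces to the $E_n/A_n$ criterion of Corollary~\ref{S4C4}: for $x\in E_n(M)$ it shows $x^{-n}\in V$ by cases, and in the case $x^n\in V$ it invokes the hypothesis that $N=\{x\in V\mid x^n\in M\}$ is the maximal ideal of $V$ to force $x^n\in U(V)$ (the nonunit condition ruling out $x^n\in N$ via $x^{n^2}\in M$); then $x^{-n}I\subseteq I\subseteq M$ gives $x^{-n}d\in M$ for all $d\in A_n(M)$. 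You instead verify the definition of an $n$-powerful semiprimary ideal directly and conclude via Corollary~\ref{S4C1}: from $(xy)^n\in A_n(M)\subseteq I$ you apply the $n$-VD dichotomy to the ratio $x/y$, multiply inside the $V$-ideal $I$ to get (after swapping) $x^{2n}\in M$, and then bridge to $x^n\in M$ by pulling $x$ into $\overline R$ (root-closedness of the integral closure) and checking $x$ is a nonunit there (no nonzero element of $M$ becomes a unit in $\overline R$), exactly where the extra hypothesis $(*)_n$ is consumed. A notable byproduct of your argument is that it never uses the clause ``$\sqrt{MV}=\{x\in V\mid x^n\in M\}$ is the maximal ideal of $V$'' in proving $(2)\Rightarrow(1)$, so it establishes the implication from a formally weaker hypothesis (that clause is then recovered from $(1)$ via Theorem~\ref{thm77}); the paper's route, by contrast, stays uniform with its $E_n/A_n$ machinery and avoids appealing to root-closedness of $\overline R$, getting $x\in\overline R$ directly from $x^{n^2}\in M\subseteq R$. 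Both uses of the nonunit hypothesis are essentially the same step in different clothing, and your final remark correctly identifies why applying Corollary~\ref{S4C4} to $x^n$ alone would be too weak.
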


\begin{proof}
{\bf $(1) \Rightarrow (2)$}  By Theorem~\ref{thm77}, $V$ is an $n$-VD with maximal ideal $\sqrt{MV} = \{ x \in V \mid x^n \in M \}$. Let $x \in K$ be a nonunit of $\overline{R}$. Then $x ^n \in M$ by Corollary~\ref{S4T13}.

{\bf $(2) \Rightarrow (1)$}  Let $x \in K$. Suppose that  $x \in E_n(M)$, i.e., $x^n \not \in M$.  First, assume that  $x^n \in V$. Suppose that $x^n \in N = \{ x \in V \mid x^n \in M \}$; so $x^{n^2} = (x^n)^n \in M$. Thus $x \in \overline{R}$ and $x$ is a nonunit of $\overline{R}$; so $x^n \in M$ by hypothesis, a contradiction.  Hence $x^n \in U(V)$, and thus $x^{-n}I \subseteq I$. Hence $x^{-n}d \in I \subseteq M$ for every $d \in A_n(M)$. Now, suppose that $x^n \not\in V$. Then $x^{-n} \in V$ since $V$ is an $n$-VD. Thus $x^{-n}I \subseteq I$, and hence $x^{-n}d \in I \subseteq M$ for every $d \in A_n(M)$. Thus $x^{-n}d \in M$ for every $x \in E_n(M)$ and $d \in A_n(M)$; so $R$ is an $n$-PVD by Corollary~\ref{S4C4}.
\end{proof}

We end this section with several examples.

\begin{exa}  \label{ex33}
{\rm (a) Let $R = \mathbb{Z}_2[[X^2,X^3]] = \mathbb{Z}_2 + X^2\mathbb{Z}_2[[X]]$. Then $R$ is quasilocal with maximal ideal $M = (X^2,X^3) =X^2\mathbb{Z}_2[[X]]$ and quotient field $K = \mathbb{Z}_2[[X]][1/X]$. It is easily checked that $R$ is an $n$-PVD if and only if $n \geq 2$ and an $n$-VD if and only if $n$ is even. First, suppose that $n$ is even. Then  $I = (A_n(M)) = \mathbb{Z}_2X^n + X^{n+2}\mathbb{Z}_2[[X]] \subsetneq M$ and $V = (I : I) = R$ has maximal ideal $M_V = M$. 
Also, $M_V = \{ x \in V \mid x^n \in M \} \subsetneq   \{ x \in K \mid x^n \in M \} = X\mathbb{Z}_2[[X]]$. 
Next, suppose that $n \geq 3$ is odd. Then  $I = (A_n(M)) = X^n\mathbb{Z}_2[[X] \subsetneq M$ and $V = (I : I) = \mathbb{Z}_2[[X]]$ has maximal ideal $M_V = X\mathbb{Z}_2[[X]] = \{ x \in K \mid x^n \in M \}$.

(b)  Let $R = F[[X^2,X^3]] = F + X^2F[[X]]$, where $F$ is a field. Then $R$ is quasilocal with maximal ideal $M = (X^2,X^3) = X^2F[[X]]$ and quotient field $F[[X]][1/X]$, and $R$ is an $n$-PVD if and only if $n \geq 2$. If $char(F) = 2$, then $(A_n(M)) \subsetneq M$ for every integer $n \geq 2$. However, $M = (A_2(M))$ if $char(F) \neq 2$.

 (c) Let $R =  \mathbb{Z}_p + \mathbb{Z}_pX + X^2F[[X]]$, where $F = \overline{\mathbb{Z}_p}$ is the algebraic closure of $\mathbb{Z}_p$. Then $R$ is quasilocal with maximal ideal $M = \mathbb{Z}_pX + X^2F[[X]]$ and quotient field $K = F[[X]][1/X]$. Moreover, $R$ is an $n$-PVD if and only if $n \geq 2$ by Theorem ~\ref{S4T14} since $\overline{R} = F[[X]]$ is a PVD (in fact, a valuation domain). However, $V = (M : M) = \mathbb{Z}_p + XF[[X]]$ is an almost valuation domain with maximal ideal $XF[[X]] = \{ x \in K \mid x^n \in M \}$, but $V$ is not an $n$-VD for any positive integer $n$ by Example~\ref{ex22}(b). Note that $V$ is a PVD, and thus an $n$-PVD for every positive integer $n$

(d) Let $F$ be a field and $N$ a positive integer. Then $R_N = F + X^NF[[X]]$ is a quasilocal integral domain with maximal ideal $M_N = X^NF[[X]]$, quotient field $F[[X]][1/X]$, and integral closure $\overline{R_N} = F[[X]]$. Note that $V_N = (M_N : M_N) = F[[X]]$ is a valuation domain with maximal ideal $XF[[X]] = \{ x \in V_N \mid x^N \in M_N \} = \sqrt{M_NV_N}$, and thus $V_N$ is an $n$-VD for every positive integer $n$. However, $R_N$ is an $n$-PVD if and only if $n \geq N$, and $R_N$ satisfies condition $(*)_n$ if and only if $n \geq N$.

(e) Let $R = \mathbb{Z}_3 +\mathbb{Z}_3X^9 + X^{12}\mathbb{Z}_3[[X]]$. Then $R$ is a quasilocal integral domain with maximal ideal $M = \mathbb{Z}_3X^9 + X^{12}\mathbb{Z}_3[[X]]$, quotient field $\mathbb{Z}_3[[X]][1/X]$, and integral closure $\overline{R} = \mathbb{Z}_3[[X]]$. Note that $V = (M : M) = \mathbb{Z}_3 + X^3\mathbb{Z}_3[[X]]$ is a $3$-VD with maximal ideal $X^3\mathbb{Z}_3[[X]] = \sqrt{MV} = \{ x \in V \mid x^3 \in M \}$. However, $R$ is not a $3$-PVD since $(X^2)^3(X^2)^3 \in M$, but $X^6 \notin M$, and $R$ does not satisfy condition $(*)_3$ since $X^3 \notin M$.}
\end{exa}

\section{Psuedo $n$-strongly prime ideals, P$n$VDs, and $n$-VDs}

In this final section, we introduce and investigate pseudo $n$-valuation domains (P$n$VDs),  yet another generalization of PVDs. We also give some more results on $n$-VDs.

Let $R$ be an integral domain with quotient field $K$. Recall \cite{B2} that $R$  is a {\it  pseudo-almost valuation domain} (PAVD)  if every prime ideal $P$ of $R$ is {\it pseudo-strongly prime}, i.e., if whenever $xyP \subseteq P$ for $x, y \in K$, then there is a positive integer $n$ such that $x^n \in R$ or $y^nP \subseteq P$. Also, recall \cite{BH} that $R$ is an {\it almost pseudo-valuation domain} (APVD) if every prime ideal $P$ of $R$ is {\it strongly primary}, i.e, if whenever $xy \in P$ for $x, y \in K$, then $x^n \in P$ for some positive integer $n$ or  $y \in P$. Note that valuation domain $\Rightarrow$ PVD $\Rightarrow$APVD $\Rightarrow$ PAVD, and no implication is reversible \cite[page 1168]{B2}.

The following is an example of an $n$-PVD for some integer $n\geq 2$ which is neither an APVD, a PAVD, a PVD, nor an almost valuation domain.

\begin{exa}\label{S4E1}  {\rm (cf. \cite[Example 3.4]{B2})
Let $R = \mathbb{Q} + \mathbb{C}X^2 + X^4\mathbb{C}[[X]]$. Then $R$ is quasilocal with maximal ideal $M = \mathbb{C}X^2 + X^4\mathbb{C}[[X]]$ and quotient field $K = \mathbb{C}[[X]][1/X]$. One can see that $R$ is neither an APVD, a PAVD,  a PVD, an almost valuation domain, nor an $n$-VD for any positive integer $n$. However, it is easily checked that $R$ is a $n$-PVD for $n \geq 4$  and $\overline{R} = \overline{\mathbb{Q}} + X\mathbb{C}[[X]]$ is a PVD with maximal ideal $N = \{x \in K \mid  x^4 \in M\} = X\mathbb{C}[[X]]$, where $\overline{\mathbb{Q}}$ is the algebraic closure of $\mathbb{Q}$ in $\mathbb{C}$. Note that $\overline{R}$  is not a valuation domain; in fact, $\overline{R}$ is not an $n$-VD for any positive integer $n$, and $R$ is not an $n$-PVD for $n = 1, 2$, or  $3$.}
\end{exa}

We now give yet another ``$n$'' generalization of PVDs.

\begin{dfn} 
{\rm Let $R$ an integral domain with quotient field $K$. A prime ideal $P$ of $R$ is a {\it pseudo $n$-strongly prime ideal} of $R$ if whenever $xyP \subseteq P$ for  $x, y \in K$, then  $x^n \in R$ or $y^nP \subseteq P$. If every prime ideal  of $R$ is a pseudo $n$-strongly prime ideal of $R$, then $R$ is a {\it pseudo $n$-valuation domain} (P$n$VD).}
\end{dfn}

A P$1$VD is just a PVD \cite[Proposition 1.2]{HH1}, an $n$-VD is a P$n$VD, a P$n$VD is a PAVD, and a P$n$VD is also a P$(mn)$VD for every positive integer $m$. Moreover, from Theorem~\ref{S4T18} and Remark~\ref{R5.5}, it follows that a P$n$VD $R$ is quasilocal, the prime ideals of $R$ are linearly ordered by inclusion, and $dim(R) \leq 1$ when $R$ is Noetherian.  

The following is an example of a PAVD  which is not a P$n$VD  for any positive integer $n$.

\begin{exa} \label{newex}
{\rm Let $p$ be a positive prime integer and  $F = \overline{\mathbb{Z}_p}$ the algebraic closure of $\mathbb{Z}_p$. Then $R =  \mathbb{Z}_p +\mathbb{Z}_pX + X^2F[[X]]$ is quasilocal with maximal ideal $M = \mathbb{Z}_pX + X^2F[[X]]$ and quotient field $K = F[[X]][1/X]$. Let $y \in K$ with $y^n \not \in R$ for every positive integer $n$. Then $y = z/X^m$, where $z \in U(F[[X]])$ and $m \geq 0$. If $m > 0$, then $y^{-2}M \subseteq M$. If $m = 0$, then there is a positive integer $n$ such that $z(0)^n = 1$; so $y^{-n}M \subseteq M$. Thus $R$ is a PAVD by \cite[Lemma 2.1 and Theorem 2.5]{B2}. We now show that $R$ is not a P$n$VD for any positive integer $n$. For $n$ a positive integer,  there is a $b \in F$ with $b^n \notin \mathbb{Z}_p$ and $b^{-n} \notin \mathbb{Z}_p$. Hence $b^n \notin R$ and $b^{-n}X \notin M$; so $R$ is not a P$n$VD by Theorem~\ref{S4T18}(a)(b) below. However, $R$ is an $n$-PVD for every integer $n \geq 2$ by Example~\ref{ex33}(c).}
\end{exa}

The proofs of the following results are similar to the proofs given in \cite{B2}, and thus the details are left to the reader.

 \begin{thm} \label{S4T18}
 Let $R$ an integral domain with quotient field $K$. 
 
{\rm (a)} Let $P$ be a prime ideal of $R$. Then $P$ is a pseudo $n$-strongly prime ideal of $R$ if
and only if $x^{-n}P \subseteq P$ for every $x \in E_n(R)$ {\rm (}see \cite[Lemma 2.1]{B2}{\rm )}. 

{\rm (b)} $R$ is a P$n$VD if and only if $R$ is quasilocal with  pseudo $n$-strongly prime  maximal ideal {\rm (}see \cite[Theorem 2.8]{B2}{\rm )}. 

{\rm (c)}  $R$ is a P$n$VD if and only if for every $a, b \in R$, we have $a^n \, | \, b^n$ in $R$ or $b^n \, | \, a^nc$ in $R$ for every nonunit $c$ of $R$ {\rm (}see \cite[Proposition 2.9]{B2}{\rm)}. 

{\rm (d)}  Let $P$ be a prime ideal of $R$. If $R$ is a P$n$VD, then $R/P$ is a P$n$VD {\rm (}see \cite[Proposition 2.14]{B2}{\rm)}. 

{\rm (d)}  An $n$-root closed P$n$VD is a PVD {\rm (}see \cite[Theorem 2.13]{B2}{\rm)}. 
\end{thm}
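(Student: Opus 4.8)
The plan is to transcribe the corresponding arguments for pseudo-almost valuation domains from \cite{B2}, with $E_n(R)$ and the maps $z\mapsto z^n$ playing the roles that $R$ and the identity play there; part (a) is the device that reduces the rest to divisibility bookkeeping. For \emph{part (a)}: if $P$ is pseudo $n$-strongly prime and $x\in E_n(R)$, apply the definition to $x$ and $y=x^{-1}$ — since $xyP=P\subseteq P$ and $x^n\notin R$, we get $x^{-n}P=y^nP\subseteq P$. Conversely, suppose $x^{-n}P\subseteq P$ for all $x\in E_n(R)$ and let $xyP\subseteq P$ with $x^n\notin R$; iterating $xyP\subseteq P$ gives $(xy)^kP\subseteq P$ for all $k\ge 1$, so $y^nP=x^{-n}(xy)^nP\subseteq x^{-n}P\subseteq P$, whence $P$ is pseudo $n$-strongly prime. (This is the analogue of \cite[Lemma 2.1]{B2}.)

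For \emph{part (b)}, one implication is the definition. For the converse, I would first record that a P$n$VD is quasilocal with linearly ordered prime spectrum: if $P\not\subseteq Q$ are primes, choose $a\in P\setminus Q$; for $0\ne b\in Q$ one cannot have $(a/b)^n\in R$ (else $a^n=(a^n/b^n)b^n\in Q$, forcing $a\in Q$), so $a/b\in E_n(R)$, and part (a) applied to the pseudo $n$-strongly prime ideal $P$ gives $b^n=(b/a)^na^n\in P$, hence $b\in P$ and $Q\subseteq P$; in particular $R$ has a unique maximal ideal. The substantive content is the descent lemma (the analogue of \cite[Theorem 2.8]{B2}): if $R$ is quasilocal with maximal ideal $M$ and $M$ is pseudo $n$-strongly prime, then every prime $P\subseteq M$ is pseudo $n$-strongly prime, after which $R$ is a P$n$VD by definition. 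Here part (a) already yields a useful intermediate fact: for every $0\ne x\in K$ we have $x^n\in R$ or $x^{-n}\in(M:M)$, so $V:=(M:M)$ is an $n$-VD. I would then transfer this to $P$ by showing that $P$ is comparable to (indeed an ideal of) $V$, so that for $x\in E_n(R)$ the containment $x^{-n}\in V$ gives $x^{-n}P\subseteq VP\subseteq P$, i.e.\ $P$ is pseudo $n$-strongly prime by (a). I expect this transfer — which follows the localization/divisibility argument of \cite{B2} and, already for $n=1$, that of \cite{HH1} — to be the main obstacle, since, unlike in the PVD case, $V$ need not be a valuation domain and $M$ need not be its maximal ideal.

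For \emph{part (c)} ($\Rightarrow$): given $a,b\in R$ (nonzero; the zero cases are trivial) and a nonunit $c$, pick a maximal ideal $M$ containing $c$; if $b^n/a^n\in R$ then $a^n\mid b^n$, and otherwise $b/a\in E_n(R)$, so $(a^n/b^n)M=(b/a)^{-n}M\subseteq M$ by (a), whence $(a^n/b^n)c\in M\subseteq R$, i.e.\ $b^n\mid a^nc$. For ($\Leftarrow$): the divisibility condition forces $R$ quasilocal (two maximal ideals would violate it, as in the proof of Theorem~\ref{S4T2}), and then for $x=a/b\in E_n(R)$ we have $b^n\nmid a^n$, so the condition applied to the pair $b,a$ gives $a^n\mid b^nc$ for every nonunit $c$, hence $x^{-n}c=(b^n/a^n)c\in R$, and $x^{-n}c$ is a nonunit (else $x^n\in R$), so $x^{-n}M\subseteq M$ and $R$ is a P$n$VD by (a) and (b). (Compare \cite[Proposition 2.9]{B2}.) For the \emph{quotient statement}, it suffices by (c) to verify the divisibility condition in $R/P$: lifting $\bar a,\bar b$ to $a,b\in R$ and applying (c) in $R$, either $a^n\mid b^n$ in $R$ (so $\bar a^n\mid\bar b^n$) or $b^n\mid a^nc$ in $R$ for every nonunit $c\in R$ (so $\bar b^n\mid\bar a^n\bar c$ for every nonunit $\bar c\in R/P$, since nonunits of $R/P$ lift to nonunits of $R$). (Compare \cite[Proposition 2.14]{B2}.)

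For \emph{the $n$-root closed statement}, such an $R$ is quasilocal with maximal ideal $M$, and $n$-root closedness gives $E_n(R)=K\setminus R$, so $x^{-n}M\subseteq M$ for every $x\in K\setminus R$ by (a). Fixing $x\in K\setminus R$ and $m\in M$ and setting $y=x^{-1}m$, we get $y^n=(x^{-n}m)m^{n-1}\in M\subseteq R$, so $y\in R$ by $n$-root closedness, and $y$ is a nonunit (else $x=my^{-1}\in R$), hence $y\in M$; thus $x^{-1}M\subseteq M$ for every $x\in K\setminus R$. By the $n=1$ case of (a)–(b) (equivalently \cite[Proposition 1.2]{HH1} together with \cite[Proposition 2.5]{AD}) this makes $M$ strongly prime, so $R$ is a PVD. (Compare \cite[Theorem 2.13]{B2} and the proof of Theorem~\ref{S4T7}.)
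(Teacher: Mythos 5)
Your parts (a) and (c), the quotient statement, and the root-closed statement are correct, and they are exactly the \cite{B2}-style arguments the paper has in mind (the paper itself gives no details, deferring everything to \cite{B2}). The one genuine gap is in the converse of part (b): you reduce it to showing that if $R$ is quasilocal and its maximal ideal $M$ is pseudo $n$-strongly prime, then every prime $P$ satisfies $x^{-n}P \subseteq P$ for all $x \in E_n(R)$, and you propose to get this from $x^{-n} \in V = (M:M)$ together with $VP \subseteq P$ --- but you never prove $VP \subseteq P$, explicitly flagging this transfer as ``the main obstacle.'' Since your proof of the $(\Leftarrow)$ direction of (c) invokes (b), this unproved step is load-bearing for the rest of the theorem.

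The step is in fact easy, and your worry (that $V$ need not be a valuation domain with maximal ideal $M$) is beside the point: $VP \subseteq P$ holds for purely formal reasons. If $v \in (M:M)$ and $p \in P \subseteq M$, put $u = vp \in M$; then $vu \in M \subseteq R$, so $u^2 = (vu)p \in Rp \subseteq P$, and hence $u \in P$ since $P$ is prime. Thus every prime of the quasilocal domain $R$ is an ideal of $(M:M)$, and combining this with $x^{-n} \in (M:M)$ for $x \in E_n(R)$ (your part (a) applied to $M$) gives $x^{-n}P \subseteq P$, so $P$ is pseudo $n$-strongly prime by (a) and $R$ is a P$n$VD. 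Alternatively, a direct computation avoids $(M:M)$ altogether: given $x \in E_n(R)$ and $0 \neq p \in P$, set $u = x^{-n}p \in M$ and suppose $u \notin P$; with $y = x/u$ one has $y^n u^{n+1} = p$, so if $y^n \in R$ then $x^n = y^n u^n \in R$, a contradiction, while if $y^n \notin R$ then $y^{-n}u = u^{n+2}/p \in M \subseteq R$, so $u^{n+2} \in pR \subseteq P$ and $u \in P$, again a contradiction. With this step inserted, your write-up is complete and follows the intended adaptation of \cite{B2}.
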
 

The next example gives some more $n$-PVDs that are not P$n$VDs.

\begin{exa}  \label{ES4.4}
{\rm Let $m \geq 2$ be an integer. Then $R = \mathbb{R} + \mathbb{R}X^{m-1} + X^m\mathbb{C}[[X]]$ is quasilocal with maximal ideal $M =  \mathbb{R}X^{m-1} + X^m\mathbb{C}[[X]]$, quotient field $K = \mathbb{C}[[X]][1/X]$, and integral closure $\overline{R} = \mathbb{C}[[X]]$. By Theorem~\ref{S4T14}, $R$ is an $n$-PVD for every integer $n \geq m$. For a positive integer $k$, let $y = e^{-i\pi/2k}$. Then $y^{k} = -i \not \in R$ and $y^{-k}X^{m-1} = iX^{m-1} \not \in R$; so $R$ is not a P$k$VD for any positive integer $k$ by Theorem~\ref{S4T18}(a).} 
\end{exa}

\begin{rmk} \label{R5.5}
{\rm Let $R$ an integral domain with quotient field $K$. Since $A_n(P) \subseteq P$ for every prime ideal $P$ of $R$,  every pseudo $n$-strongly prime ideal of $R$ is also an $n$-powerful semiprimary ideal of $R$ by Corollary~\ref{S4C3} and Theorem~\ref{S4T18}(a), and thus a P$n$VD is an $n$-PVD. Hence, we have the following implications
 $$n{\rm -}VD \Rightarrow PnVD \Rightarrow n{\rm -}PVD.$$
Neither of the above two implications is reversible. A P$n$VD need not be an $n$-VD by Theorem~\ref{S4T21}, and an $n$-PVD need not be a P$n$VD by Examples~\ref{newex} and \ref{ES4.4}. Also, note that the ring in Example~\ref{S4E1} is a $4$-PVD, but not a P$4$VD}.
\end{rmk}

The next theorem gives a case where an $n$-PVD is a P$n$VD. Note that the $n = 1$ case is just \cite[Proposition 2.5]{AD}. We may have $M \neq (A_n(M))$ for every integer $n \geq 2$ (see Example~\ref{ex33}(a)(b)). Note that in the next two theorems, we need the extra condition $(*)_n$ (cf. Example~\ref{ex33}(d)(e), and recall that if $R$ is not an $n$-PVD, then $R$ is not a P$n$VD by Remark~\ref{R5.5}). 

\begin{thm}\label{S4T19}
Let $R$ be a quasilocal integral domain with maximal ideal $M = (A_n(M))$ and quotient field $K$. Then the following statements are equivalent.
\begin{enumerate}
\item $R$ is a P$n$VD.
\item $R$ is an $n$-PVD.
\item $V = (M : M)$ is an $n$-VD with maximal ideal $\sqrt{MV} = \{x \in V \mid x^n \in M\}$, and if $x \in K$ is a nonunit of $\overline{R}$, then $x^n \in M$.
\end{enumerate}
\end{thm}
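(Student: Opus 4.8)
The plan is to deduce all three equivalences from results already in hand, so that almost nothing new has to be proved. The first observation is that the standing hypothesis $M = (A_n(M))$ makes the ideal $I = (A_n(M))$ of Theorem~\ref{S4T16} equal to $M$, hence $V = (I:I) = (M:M)$. With this identification, condition (3) of the present theorem is word-for-word condition (2) of Theorem~\ref{S4T16}, so the equivalence $(2) \Leftrightarrow (3)$ requires no argument. Likewise, the implication $(1) \Rightarrow (2)$ is exactly the general fact recorded in Remark~\ref{R5.5} that a P$n$VD is an $n$-PVD.

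So the one step that must actually be carried out is $(2) \Rightarrow (1)$: assuming $R$ is an $n$-PVD that is quasilocal with $M = (A_n(M))$, show $R$ is a P$n$VD. I would invoke the P$n$VD criteria of Theorem~\ref{S4T18}: by part (b) it is enough to show the maximal ideal $M$ is pseudo $n$-strongly prime, and by part (a) this is equivalent to $x^{-n}M \subseteq M$ for every $x \in E_n(R)$. Fix $x \in E_n(R)$, i.e.\ $x \in K$ with $x^n \notin R$; then also $x^n \notin M$, so $x \in E_n(M)$. Since $R$ is an $n$-PVD, Corollary~\ref{S4C4}(3) yields $x^{-n}d \in M$ for every $d \in A_n(M)$. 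Now write an arbitrary $m \in M = (A_n(M))$ as a finite sum $m = \sum r_i d_i$ with $r_i \in R$ and $d_i \in A_n(M)$; then $x^{-n}m = \sum r_i(x^{-n}d_i)$ lies in $M$ because $M$ is an ideal and each $x^{-n}d_i \in M$. Hence $x^{-n}M \subseteq M$, completing $(2) \Rightarrow (1)$.

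I expect the only real content to be the last reduction, and even that is mild: the point is that the hypothesis $M = (A_n(M))$ is precisely what upgrades the elementwise statement supplied by Corollary~\ref{S4C4} (namely $x^{-n}$ times each element of $A_n(M)$ lies in $M$) to the ideal-theoretic statement $x^{-n}M \subseteq M$ needed for pseudo $n$-strong primality. Without that hypothesis the two differ — this is exactly the gap exhibited by Example~\ref{ex33}(d)(e), where $(M:M)$ behaves well but $R$ fails to be a P$n$VD — so it is essential that the argument use $M = (A_n(M))$ and not merely $I = (A_n(M))$. Everything else is bookkeeping with the quoted lemmas, so there is no serious obstacle.
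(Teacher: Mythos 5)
Your proposal is correct and follows essentially the same route as the paper: $(1)\Rightarrow(2)$ via Remark~\ref{R5.5}, $(2)\Leftrightarrow(3)$ by specializing Theorem~\ref{S4T16} to $I=(A_n(M))=M$, and $(2)\Rightarrow(1)$ by using Corollary~\ref{S4C4} to get $x^{-n}A_n(M)\subseteq M$ for $x\in E_n(R)$ and then the hypothesis $M=(A_n(M))$ to upgrade this to $x^{-n}M\subseteq M$, invoking Theorem~\ref{S4T18}(a)(b). Your explicit finite-sum argument is exactly the step the paper leaves implicit, so there is nothing further to add.
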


\begin{proof}
$(1) \Rightarrow (2)$ A P$n$VD is an $n$-PVD by Remark~\ref{R5.5}.

 $(2) \Rightarrow (1)$  Let $x \in E_n(R)$; so $x \in E_n(M)$. Then $x^{-n}A_n(M) \subseteq M$ by Corollary~\ref{S4C4}, and thus $x^{-n}M \subseteq M$ since $M = (A_n(M))$ by  hypothesis. Hence $R$ is a P$n$VD by Theorem~\ref{S4T18}(a)(b).

$(3) \Leftrightarrow (1)$ This is clear by Theorem \ref{S4T16}.
\end{proof}

The following result recovers that a quasilocal integral domain $R$ with maximal ideal $M$ is a PVD if and only if $(M : M)$ is a valuation domain with maximal ideal $M$ \cite[Proposition 2.5]{AD}; its proof is an analog of the proof of \cite[Theorem 2.15]{B2}.

\begin{thm} \label{S4T29}
Let $R$ be a quasilocal integral domain with maximal ideal $M$ and quotient field $K$. Then the following statements are equivalent.
\begin{enumerate}
\item $R$ is a P$n$VD.
\item $V = (M : M)$ is an $n$-VD with maximal ideal $\sqrt{MV} = \{x \in V \mid x^n \in M\}$, and if $x \in K$ is a nonunit of $\overline{R}$, then $x^n \in M$.
\end{enumerate}
\end{thm}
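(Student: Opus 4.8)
The plan is to reduce both implications, via Theorem~\ref{S4T18}(a)(b), to a single divisibility statement about $V=(M:M)$. Since $R$ is quasilocal, $R$ is a P$n$VD if and only if $M$ is pseudo $n$-strongly prime, and by Theorem~\ref{S4T18}(a) this holds if and only if $x^{-n}M\subseteq M$ — equivalently $x^{-n}\in V$ — for every $x\in E_n(R)$. I would then prove $(1)\Rightarrow(2)$ by reading the $n$-VD structure of $V$ off this condition (together with what an $n$-PVD gives about $\overline R$), and $(2)\Rightarrow(1)$ by using the $n$-VD structure of $V$ and the hypothesis on $\overline R$ to recover $x^{-n}\in V$.

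For $(1)\Rightarrow(2)$ I would argue as follows. A P$n$VD is an $n$-PVD (Remark~\ref{R5.5}), so Corollary~\ref{S4T13} gives that $\overline R$ is a PVD, hence quasilocal, with maximal ideal $\sqrt{M\overline R}=\{x\in K\mid x^n\in M\}$; thus any nonunit $x$ of $\overline R$ satisfies $x^n\in M$, which is the second clause of (2). That $V$ is an $n$-VD follows by taking $0\neq x\in K$: either $x^n\in R\subseteq V$, or $x^n\notin R$, so $x\in E_n(R)$ and $x^{-n}\in V$ by Theorem~\ref{S4T18}(a). Hence $V$ is quasilocal, with maximal ideal $N_V$; from $MV=M\subsetneq V$ we get $M\subseteq N_V$, so $\sqrt{MV}\subseteq N_V$ and $\{x\in V\mid x^n\in M\}\subseteq N_V$. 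Since $M$ is an $n$-powerful semiprimary prime ideal of $R$, Theorem~\ref{S4T11} shows that $x^k\in M$ (some $k\ge 1$, $x\in K$) forces $x^n\in M$; applying this to $x\in\sqrt{MV}$, where $x^k\in MV=M$, gives $\sqrt{MV}=\{x\in V\mid x^n\in M\}$. Finally, if $x\in V$ with $x^n\notin M$ then either $x^n\in R$, so $x^n\in U(R)\subseteq U(V)$, or $x^n\notin R$, so $x\in E_n(R)$ and $x^{-n}\in V$; in both cases $x^n\in U(V)$, hence $x\in U(V)$. Therefore $N_V\subseteq\{x\in V\mid x^n\in M\}$, so $N_V=\sqrt{MV}=\{x\in V\mid x^n\in M\}$, which is (2).

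For $(2)\Rightarrow(1)$ I would show $x^{-n}\in V$ for every $x\in E_n(R)$ and then invoke Theorem~\ref{S4T18}(a)(b). Since $V$ is an $n$-VD, either $x^{-n}\in V$ (done) or $x^n\in V$; in the latter case $x^n\in U(V)$ again yields $x^{-n}=(x^n)^{-1}\in V$, so the only case to exclude is $x^n\in N_V=\{y\in V\mid y^n\in M\}$. If $x^n\in N_V$, then $x^{n^2}=(x^n)^n\in M\subseteq R$, so $x$ is a root of the monic polynomial $T^{n^2}-x^{n^2}\in R[T]$ and hence $x\in\overline R$. As $\overline R$ is integral over $R$, lying over produces a prime ideal of $\overline R$ lying over $M$, so $M\overline R\subsetneq\overline R$ and every element of $M$ — in particular $x^{n^2}$ — is a nonunit of $\overline R$; hence $x$ is a nonunit of $\overline R$, and the hypothesis on nonunits of $\overline R$ forces $x^n\in M\subseteq R$, contradicting $x\in E_n(R)$. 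So $x^n\in N_V$ cannot occur, $x^{-n}\in V$ always, and $R$ is a P$n$VD.

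The delicate step is precisely the case $x^n\in V$ in $(2)\Rightarrow(1)$: the $n$-VD hypothesis on $V$ does not by itself give $x^{-n}\in V$ there, and ruling it out forces one to combine the explicit description $N_V=\{y\in V\mid y^n\in M\}$ of the maximal ideal of $V$, the integrality of $x$ over $R$, properness of $M\overline R$, and the hypothesis on nonunits of $\overline R$ — which is exactly where that last hypothesis is indispensable (see Example~\ref{ex33}(e)).
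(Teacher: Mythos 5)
Your proposal is correct and follows essentially the same route as the paper: both directions are reduced via Theorem~\ref{S4T18}(a)(b) to the condition $x^{-n}M\subseteq M$ for $x\in E_n(R)$, the maximal-ideal identification in $(1)\Rightarrow(2)$ uses the same chain of inclusions (your appeal to Theorem~\ref{S4T11} is a harmless extra step, since $N_V\subseteq\{x\in V\mid x^n\in M\}\subseteq\sqrt{MV}\subseteq N_V$ already closes the loop), and $(2)\Rightarrow(1)$ excludes the case $x^n\in N_V$ exactly as the paper does via $x^{n^2}\in M$, integrality, and the hypothesis on nonunits of $\overline{R}$. Your explicit justification that $x$ is a nonunit of $\overline{R}$ (via lying over, so elements of $M$ are nonunits of $\overline{R}$) makes precise a step the paper leaves implicit.
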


\begin{proof}
 {$(1) \Rightarrow (2)$} Let $R$ be a P$n$VD. Let $x \in E_n(V)$; so $x \in E_n(R)$. Then $x^{-n}M \subseteq M$ by Theorem \ref{S4T18}(a); so $x^{-n} \in V$. Thus $V$ is an $n$-VD with maximal ideal $M_V$. Let $x \in M_V$. If  $x^n  \in R$, then $x^n \in M$. Otherwise, $x \in E_n(R)$. Hence $x^{-n}M \subseteq M$ by Theorem \ref{S4T18}(a) again; so      $x^{-n}  \in V$. Thus $x \in U(V)$, a contradiction. Hence $M_V \subseteq \{ x\in V \mid x^n \in M \} \subseteq \sqrt{MV}$, and $\sqrt{MV} \subseteq M_V$ since $MV = M \subsetneq V$. Thus $ M_V = \sqrt{MV} = \{x \in V \mid x^n \in M\}$. If $x \in K$ is a nonunit of $\overline{R}$, then $x^n \in M$ by Corollary~\ref{S4T13} since a P$n$VD is an $n$-PVD by Remark~\ref{R5.5}.

{$(2) \Rightarrow (1)$} Let $V = (M : M)$ be an $n$-VD with maximal ideal $\sqrt{MV} = \{x \in V \mid x^n \in M\}$.
Suppose that $x \in E_n(R)$; so $x^n \notin M$. If $x^n \in  V$ and $x^n \notin U(V)$, then $x^{n^2} = (x^n)^n \in M \subseteq R$; so $x \in \overline{R}$. Thus $x^n \in M$ by hypothesis, a contradiction. Hence $x^n \in U(V)$; so $x^{-n}M \subseteq M$. If  $x^n \notin V$, then
 $x^{-n} \in V$ since $V$ is an $n$-VD. Thus $x^{-n}M \subseteq M$ in either case; so $R$ is a P$n$VD by Theorem~\ref{S4T18}(a)(b).
\end{proof}

\begin{cor} \label{cor66}
Let $R$ be a P$n$VD with maximal ideal $M$. If $P$ is a prime ideal of $R$, then $W_P = (P : P)$ is an $n$-VD. Moreover, if $P \subseteq Q$ are prime ideals of $R$, then $W_Q = (Q : Q) \subseteq (P : P) = W_P$.
\end{cor}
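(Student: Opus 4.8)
The plan is to reduce the first assertion to the ``maximal ideal'' case handled by Theorem~\ref{S4T29}, and then to leverage the fact that passing to $R_P$ turns a P$n$VD into a P$n$VD with a nicer maximal ideal. First I would recall that $R$ is quasilocal (Remark~\ref{R5.5}, since a P$n$VD is an $n$-PVD, which is quasilocal by Theorem~\ref{S4T2}), and that by Theorem~\ref{S4T18}(d) the quotient $R/P$ is again a P$n$VD; but what we actually want is a statement about $R_P$. So instead I would argue directly: since $R$ is a P$n$VD, its maximal ideal $M$ is pseudo $n$-strongly prime, and hence by Theorem~\ref{S4T18}(a) we have $x^{-n}M \subseteq M$ for every $x \in E_n(R)$. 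The key localization step is to show $R_P$ is a P$n$VD. For this I would verify that $P_P = PR_P$ is a pseudo $n$-strongly prime ideal of $R_P$: given $x \in E_n(R_P)$, i.e.\ $x \in K$ with $x^n \notin R_P$, one has in particular $x^n \notin R$, so $x \in E_n(R)$, whence $x^{-n}M \subseteq M$; since $P \subseteq M$ and $P$ is an $n$-divided prime of $R$ (Theorem~\ref{S4T4}, because $P$ is an $n$-powerful semiprimary ideal of the $n$-PVD $R$ by Remark~\ref{R5.5}), one checks $x^{-n}P \subseteq P$, and this passes to $x^{-n}P_P \subseteq P_P$. Thus $P_P$ is pseudo $n$-strongly prime, and since $R_P$ is quasilocal with maximal ideal $P_P$, it is a P$n$VD by Theorem~\ref{S4T18}(b).

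Now with $R_P$ a P$n$VD whose maximal ideal is $P_P$, I would apply Theorem~\ref{S4T29} (or the fact that a P$n$VD satisfies condition $(2)$ there): the ring $(P_P : P_P)$, computed inside the quotient field $K$ of $R_P$ (which is still $K$), is an $n$-VD. The final point is the identification $(P_P : P_P) = (P : P)$. This is the routine observation that $xP \subseteq P$ for $x \in K$ if and only if $xP_P \subseteq P_P$: one direction is immediate, and for the other, if $xP_P \subseteq P_P$ then for each $p \in P$ we have $xp = q/s$ with $q \in P$, $s \in R \setminus P$, so $sxp = q \in P$; then for any $p' \in P$, multiplying by an appropriate element and using that $P$ is $n$-divided (so $P$ absorbs enough) one concludes $xp \in P$. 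Actually the cleanest route is: $(P : P)$ and $(P_P : P_P)$ are both overrings of $R$ inside $K$ with the same defining condition once one notes $P$ and $P_P$ have the same ``stabilizer'' because $P = P_P \cap R$ and $P_P = PR_P$. Hence $W_P := (P : P) = (P_P : P_P)$ is an $n$-VD.

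For the ``moreover'' statement, let $P \subseteq Q$ be prime ideals of $R$. I want $W_Q = (Q:Q) \subseteq (P:P) = W_P$. Take $x \in K$ with $xQ \subseteq Q$; I must show $xP \subseteq P$. Since $R$ is a P$n$VD, both $P$ and $Q$ are $n$-divided primes of $R$ (Theorem~\ref{S4T4} via Remark~\ref{R5.5}), and the primes of $R$ are linearly ordered by inclusion. The argument is the standard PVD/PAVD one (an analog of how one shows $(Q:Q) \subseteq (P:P)$ for divided primes): for $p \in P$ and any $y \in R \setminus Q$, $n$-dividedness of $Q$ gives $y^n \mid q^n$ for $q \in Q$; combined with $xQ \subseteq Q$ and the linear ordering $P \subseteq Q$, one shows $xp \in P$. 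More directly: $W_Q$ is a local overring of $R$, $W_Q \subseteq R_P$ should fail in general, so I would instead argue via elements — if $xp \notin P$ for some $p \in P$, then since $x \in W_Q \subseteq (\text{an } n\text{-VD})$ the primes of $W_Q$ are comparable, and one derives that $x^{-1}$ or a power lies in a place forcing a contradiction with $xQ \subseteq Q$. I expect \textbf{this last inclusion $W_Q \subseteq W_P$ to be the main obstacle}, since it requires carefully exploiting the $n$-divided structure rather than a one-line ideal manipulation; the earlier steps are essentially bookkeeping on top of Theorems~\ref{S4T18}, \ref{S4T29}, and \ref{S4T4}.
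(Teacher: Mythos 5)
Your argument for the first assertion stalls at the identification $(P_P : P_P) = (P : P)$, and this is a genuine gap, not a routine observation. The containment $(P:P) \subseteq (P_P : P_P)$ is immediate, but it is the reverse containment that you need, and your sketch does not establish it: from $xP_P \subseteq P_P$ and $p \in P$ you only get $xp \in P_P$, and (via Theorem~\ref{S4T9}) only that $(xp)^n \in P \subseteq R$, i.e., $xp \in \overline{R}$; you never show $xp \in R$, so primeness of $P$ cannot be invoked, and ``$P$ absorbs enough'' is not an argument. Moreover, the identity is false for general domains (for the nonmaximal principal prime $P = (X)$ of $F[X,Y]$ one has $(P:P) = F[X,Y]$ while $(P_P : P_P) = F[X,Y]_{(X)}$), and in a P$n$VD the prime $P$ is only $n$-divided, not divided, so you cannot conclude $P = P_P$ as one would in the PVD case. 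The second gap is the ``moreover'' inclusion $W_Q \subseteq W_P$, which you explicitly leave open as ``the main obstacle''; in fact it is the easy step, it needs no $n$-divided structure at all, and the paper derives the entire corollary from it.

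The missing elementary fact is \cite[Lemma 2.2]{A}: if $P \subseteq Q$ are prime ideals of a domain $R$ and $x \in (Q:Q)$, then for $p \in P$ both $xp$ and $x^2p = x(xp)$ lie in $Q \subseteq R$, so $(xp)^2 = (x^2p)p \in P$; since $xp \in R$ and $P$ is prime, $xp \in P$. Hence $(Q:Q) \subseteq (P:P)$, which settles the ``moreover'' statement, and taking $Q = M$ it also settles the first one: $V = (M:M) \subseteq (P:P) = W_P$, $V$ is an $n$-VD by Theorem~\ref{S4T29}, and an overring of an $n$-VD is again an $n$-VD (as noted after the definition of $n$-VD), so $W_P$ is an $n$-VD. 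This is exactly the paper's proof. Your localization step, by contrast, is essentially correct but both roundabout and insufficient: since every prime of a P$n$VD is pseudo $n$-strongly prime by definition, Theorem~\ref{S4T18}(a) applied to $P$ gives $x^{-n}P \subseteq P$ for $x \in E_n(R)$ directly, with no detour through $x^{-n}M \subseteq M$ and $n$-dividedness (that detour, as written, is itself unjustified); and even granting that $R_P$ is a P$n$VD, the conclusion you want about $(P:P)$ does not follow without the unproved identification above.
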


\begin{proof}
We have $V = (M : M) \subseteq (P : P) = W_P$ by \cite[Lemma 2.2]{A} since $P \subseteq M$. Thus $W_P$ is an $n$-VD  since $V$ is an $n$-VD by Theorem~\ref{S4T29}.  The ``moreover'' statement 
is clear since $(Q : Q) \subseteq (P : P)$ by \cite[Lemma 2.2]{A} again.
\end{proof}

Let $T$ be an overring of an integral domain $R$ and $n$ a positive integer. Then $T$ is an {\it $n$-root extension}  of $R$ if $x^n \in R$ for every $x \in T$, and $T$ is a  {\it root extension} of $R$ if for every $x \in T$, there is a positive integer $m$ such that $x^m \in R$.

\begin{thm}\label{S4T17}
Let $R$ be a quasilocal integral domain with maximal ideal $M$ and quotient field $K$, $n$ a positive integer, and $V$  a valuation overring of $R$ with maximal ideal $N = \{x \in V \mid x^n \in M\}$. Then $R$ is an $n$-VD if and only if $V$ is an $n$-root extension of $R$.
\end{thm}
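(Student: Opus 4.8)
The plan is to prove the two implications separately and directly, leaning only on the valuation property of $V$ and the explicit form of its maximal ideal $N$. Before starting I would record two facts used throughout: since $V$ is a valuation domain it is quasilocal, so $V \setminus N = U(V)$, and the quotient field of the overring $V$ is $K$; and $M \subseteq N$, because every $m \in M$ lies in $V$ and satisfies $m^n \in M$, hence $m \in \{y \in V \mid y^n \in M\} = N$.

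For the direction ``$R$ an $n$-VD $\Rightarrow$ $V$ an $n$-root extension of $R$'', I would take an arbitrary $x \in V$ and split on whether $x \in N$. If $x \in N$, the description of $N$ immediately gives $x^n \in M \subseteq R$. If $x \notin N$, then $x \in U(V)$, so $x^{-1} \in V \subseteq K$, and the $n$-VD hypothesis applied to $x \in K$ yields $x^n \in R$ or $x^{-n} \in R$. The first alternative is exactly what is wanted; in the second, I would note that $x^{-n} \in U(V)$, hence $x^{-n} \notin N \supseteq M$, so $x^{-n} \in R \setminus M = U(R)$ by quasilocality of $R$, whence $x^n = (x^{-n})^{-1} \in R$ as well. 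In either case $x^n \in R$.

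For the converse, assume $V$ is an $n$-root extension of $R$ and take $0 \neq x \in K$. Since $V$ is a valuation domain with quotient field $K$, one of $x$, $x^{-1}$ lies in $V$; in either case raising to the $n$-th power and invoking the $n$-root extension hypothesis places $x^n$ or $x^{-n}$ in $R$, so $R$ is an $n$-VD. This half uses neither the description of $N$ nor the quasilocality of $R$, only that $V$ is a valuation overring of $R$.

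The computation is light throughout; the one place that requires attention is the second alternative of the forward direction, where one must promote ``$x^{-n} \in R$'' to ``$x^n \in R$''. This is the only step that genuinely uses both the quasilocality of $R$ and the defining property of $N$ (through the inclusion $M \subseteq N$), so I expect it to be the main—if modest—obstacle.
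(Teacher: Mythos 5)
Your proposal is correct and follows essentially the same route as the paper: the forward direction splits on $x \in N$ versus $x \in U(V)$ and uses quasilocality of $R$ (via $x^{\pm n} \notin M$, so the power of $x$ landing in $R$ is a unit) to conclude $x^n \in R$, while the converse uses only that $V$ is a valuation domain with quotient field $K$ together with the $n$-root extension hypothesis. The only cosmetic differences are that the paper phrases $x^{\pm n}\notin M$ directly from the definition of $N$ rather than through the inclusion $M\subseteq N$, and restricts attention to $x\in V\setminus R$; these do not change the argument.
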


\begin{proof}
We may assume that $R \subsetneq V$. 
Suppose that $R$ is an $n$-VD. Let $x \in V\setminus R$. If $x \in N$, then
$x^n \in  M \subseteq R$. Thus, assume that $x \not \in N$. Since $N$ is the maximal ideal of $V$, we have $x \in U(V)$. Thus $x^n \not\in M$ and $x^{-n} \
\not\in M$. Since $R$ is an $n$-VD, we have $x^n \in U(R) \subseteq R$. Hence $V$ is an $n$-root extension of $R$.

Conversely, suppose that $V$ is an $n$-root extension of $R$. Let  $x \in K$ with $x^n \not\in R$. Then $x \not \in V$ since $V$ is an $n$-root extension of $R$, and thus $x^{-1} \in V$ since $V$ is a valuation domain.  Hence $x^{-n} \in R$ since $V$ is an $n$-root extension of $R$, and thus $R$ is an $n$-VD. 
\end{proof}

\begin{lem}\label{S4L1}
Let $R$ be a quasilocal integral domain with maximal ideal $M$ and quotient field $K$. If $R$ is an $n$-VD, then $\overline{R}$ is a valuation domain with maximal ideal $\sqrt{M\overline{R}} = \{x \in K \mid x^n \in M\}$ and $R\subseteq \overline{R}$ is an $n$-root extension.
\end{lem}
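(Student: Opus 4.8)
The plan is to establish the three assertions in sequence, each feeding into the next, with Theorem~\ref{nvd}(c) doing the real work. First I would prove that $R \subseteq \overline{R}$ is an $n$-root extension: if $x \in \overline{R}$, then $x$ is integral over $R$, hence so is $x^n$, and since $R$ is an $n$-VD, Theorem~\ref{nvd}(c) forces $x^n \in R$.

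Next I would check that $\overline{R}$ is a valuation domain. Since $R \subseteq \overline{R} \subseteq K$, the domain $\overline{R}$ has quotient field $K$, so it suffices to show that $x \in \overline{R}$ or $x^{-1} \in \overline{R}$ for each $0 \neq x \in K$. As $R$ is an $n$-VD, either $x^n \in R$ or $x^{-n} \in R$; in the first case $x$ is a root of the monic polynomial $T^n - x^n \in R[T]$, so $x \in \overline{R}$, and symmetrically $x^{-1} \in \overline{R}$ in the second case. Thus $\overline{R}$ is a valuation domain, in particular quasilocal; let $N'$ denote its maximal ideal.

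It remains to identify $N'$ with $N := \{x \in K \mid x^n \in M\}$ and to obtain the radical formula. I would first record that $N \subseteq \overline{R}$ (if $x^n \in M \subseteq R$, then $x$ is integral over $R$) and $M \subseteq N$ (since $m^n \in M^n \subseteq M$). For $N \subseteq N'$: if some $x \in N$ were a unit of $\overline{R}$, then $x^{-1} \in \overline{R}$, so $x^{-n} \in R$ by the $n$-root extension property, whence $1 = x^n x^{-n} \in M$, contradicting that $M$ is proper; hence $x$ is a nonunit, i.e., $x \in N'$. For $N' \subseteq N$: given $x \in N'$, the $n$-root extension property gives $x^n \in R$, and if $x^n \notin M$ then $x^n \in U(R) \subseteq U(\overline{R})$, which would make $x$ a unit of $\overline{R}$, a contradiction; so $x^n \in M$, i.e., $x \in N$. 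Finally, $M \subseteq N = N'$ shows $M\overline{R} \subseteq N \subsetneq \overline{R}$, so $\sqrt{M\overline{R}} \subseteq \sqrt{N} = N$, while $x \in N \Rightarrow x^n \in M\overline{R} \Rightarrow x \in \sqrt{M\overline{R}}$ gives the reverse inclusion.

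I do not anticipate a serious obstacle; the only delicate point is keeping straight, inside the valuation domain $\overline{R}$, the interplay between being a unit of $\overline{R}$, lying in $R$, and lying in $M$ — which is precisely where the $n$-root extension property and the quasilocality of $R$ are needed.
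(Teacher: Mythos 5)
Your proof is correct, but it follows a genuinely different route from the paper's. The paper proves this lemma almost entirely by quotation: an $n$-VD is an almost valuation domain, so $\overline{R}$ is a valuation domain and $R\subseteq\overline{R}$ is a root extension by a theorem of Anderson--Zafrullah; then the maximal ideal is identified as $\{x\in K \mid x^n\in M\}$ via Theorem~\ref{S4T14} (using that an $n$-VD is an $n$-PVD), and only then is the $n$-root extension property deduced from Theorem~\ref{S4T17}, whose hypothesis requires already knowing that description of the maximal ideal. You reverse the order and work from scratch: you get the $n$-root extension first, directly from Theorem~\ref{nvd}(c) (if $x\in\overline{R}$ then $x^n$ is integral over $R$, hence lies in $R$), prove $\overline{R}$ is a valuation domain by the bare $n$-VD condition ($x^n\in R$ or $x^{-n}\in R$ forces $x\in\overline{R}$ or $x^{-1}\in\overline{R}$), and then identify the maximal ideal with $\{x\in K\mid x^n\in M\}$ by elementary unit/nonunit arguments (the only step left tacit, that $x^n\in U(\overline{R})$ makes $x$ a unit of $\overline{R}$ via $x^{-1}=x^{n-1}(x^n)^{-1}$, is immediate). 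What the paper's proof buys is brevity and placement of the lemma inside the $n$-PVD/almost-valuation framework already developed; what yours buys is self-containment — it needs neither the external almost-valuation-domain result nor Theorems~\ref{S4T14} and \ref{S4T17}, and in effect re-derives the special cases of those results that the lemma actually uses.
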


\begin{proof}
Let $R$ be an $n$-VD. Then $R$ is an almost valuation domain; so $\overline{R}$ is a valuation domain and $R\subseteq \overline{R}$ is a root extension by \cite[Theorem 5.6]{AZ1}. Thus $\sqrt{M\overline{R}}  = \{x \in K \mid x^n \in M\}$ is the maximal ideal of $\overline{R}$ by Theorem \ref{S4T14} since an $n$-VD is an $n$-PVD. Hence $\overline{R}$ is an $n$-root extension of $R$ by Theorem \ref{S4T17}.
\end{proof}

\begin{thm}\label{S4T20}
Let $R$ be a quasilocal integral domain with maximal ideal $M$ and quotient field $K$, and let $V$ be an $n$-VD  overring of $R$ with maximal ideal $N = \{x \in V \mid x^n \in M\}$. Then $R$ is an $n$-VD if and only if \, $\overline{V} = \overline{R} = \{x \in K \mid x^n \in R\}$. 
\end{thm}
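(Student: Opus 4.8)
The plan is to reduce both implications to the single fact that for an $n$-VD $D$ one has $\overline{D} = \{x \in K \mid x^n \in D\}$, together with comparability of elements in a valuation domain. First I would record this fact for $R$: assuming $R$ is an $n$-VD, Lemma~\ref{S4L1} gives that $\overline{R}$ is a valuation domain, that $R \subseteq \overline{R}$ is an $n$-root extension, and that the maximal ideal of $\overline{R}$ is $\{x \in K \mid x^n \in M\}$. The $n$-root extension property yields $\overline{R} \subseteq \{x \in K \mid x^n \in R\}$, and the reverse inclusion is immediate since $x^n \in R$ makes $x$ a root of the monic polynomial $T^n - x^n \in R[T]$, hence integral over $R$. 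So $\overline{R} = \{x \in K \mid x^n \in R\}$.

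For the forward direction, the key remaining step is to prove $V \subseteq \overline{R}$; once this is established, $\overline{V} \subseteq \overline{\overline{R}} = \overline{R} \subseteq \overline{V}$ forces $\overline{V} = \overline{R}$, and we are done by the previous paragraph. To show $V \subseteq \overline{R}$, I would take $0 \neq v \in V$ and split on whether $v$ lies in the maximal ideal $N$ of $V$. If $v \in N$, then $v^n \in M \subseteq R$, so $v \in \overline{R}$. If $v \in U(V)$, then $v^{-1} \in V$; since $R$ is an $n$-VD, either $v^n \in R$ (so $v \in \overline{R}$) or $v^{-n} \in R$. In the latter case, if $v^{-n}$ were a nonunit of $R$ it would lie in $M$ (as $R$ is quasilocal), whence $(v^{-1})^n \in M$ and so $v^{-1} \in N$, contradicting $v^{-1} \in U(V)$; therefore $v^{-n} \in U(R)$, giving $v^n = (v^{-n})^{-1} \in R$ and $v \in \overline{R}$.

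For the converse, suppose $\overline{V} = \overline{R} = \{x \in K \mid x^n \in R\}$. Since $V$ is an $n$-VD, it is an almost valuation domain, so $\overline{V}$ is a valuation domain (as in the proof of Lemma~\ref{S4L1}); hence $\overline{R}$ is a valuation domain. Then for any $0 \neq x \in K$, comparability in $\overline{R}$ gives $x \in \overline{R}$ or $x^{-1} \in \overline{R}$, i.e.\ $x^n \in R$ or $x^{-n} \in R$, so $R$ is an $n$-VD.

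I expect the only delicate point to be the case analysis establishing $V \subseteq \overline{R}$ — specifically, ruling out the possibility that $v^{-n}$ is a nonunit of $R$ when $v \in U(V)$ and $v^{-n} \in R$; this is precisely where the hypothesis $N = \{x \in V \mid x^n \in M\}$ enters. All other steps are direct appeals to Lemma~\ref{S4L1}, the definition of integral closure, and valuation-domain comparability.
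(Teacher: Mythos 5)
Your proof is correct and follows essentially the same route as the paper: the forward direction establishes $V \subseteq \overline{R}$ (equivalently, that $V$ is an $n$-root/integral extension of $R$) by the same case split on $v \in N$ versus $v \in U(V)$, using the hypothesis $N = \{x \in V \mid x^n \in M\}$ together with quasilocality of $R$, and then identifies $\overline{R} = \overline{V} = \{x \in K \mid x^n \in R\}$ via Lemma~\ref{S4L1}; the converse, exactly as in the paper, uses that $\overline{V}$ is a valuation domain to get $x^n \in R$ or $x^{-n} \in R$. Your spelling out of why $v^{-n} \in U(R)$ in the unit case is just a more explicit rendering of the paper's remark that neither $x^n$ nor $x^{-n}$ lies in $M$.
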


\begin{proof} 
 We may assume that  $R \subsetneq V$.
Suppose that $R$ is an $n$-VD. Then $\overline{R}$ is a valuation domain with maximal ideal $W = \{x \in K \mid x^n \in M\}$ and $R\subseteq \overline{R}$ is an $n$-root extension by Lemma \ref{S4L1}. Similarly, since $V$ is an $n$-VD, $\overline{V}$ is a valuation domain with maximal ideal $T = \{x \in K \mid x^n \in N\}$ and $V\subseteq \overline{V}$ is an $n$-root extension by Lemma \ref{S4L1}. First, we show that $R\subsetneq V$ is an $n$-root extension. Let $x \in V\setminus R$. If $x \in N$, then 
$x^n \in  M \subseteq R$.  Hence, assume that $x \not \in N$. Since $N$ is the maximal ideal of $V$, we have $x \in U(V)$. Since $x \in U(V)$, neither $x^n \in M$ nor $x^{-n} \in M$. Since $R$ is an $n$-VD, $x^n \in U(R) \subseteq R$. Thus $V$ is an $n$-root extension of $R$. Since $V$ is an integral overring of $R$, we have that $\overline{V}$ is integral over $R$, and thus  $\overline{R} = \overline{V} = \{x \in K \mid x^n \in R\}$.

Conversely, suppose that $\overline{R} = \overline{V} = \{x \in K \mid x^n \in R\}$, and let $x \in K$ with $x^n \not\in R$. Then $x \not \in \overline{V}$, and thus $x^{-1} \in \overline{V}$ since $\overline{V}$ is a valuation domain by Lemma~\ref{S4L1}. Hence $x^{-n} \in R$; so $R$ is an $n$-VD.
\end{proof}

Let $V$ be a valuation domain with maximal ideal $M$, residue field $F = V/M$, and $\pi : V \longrightarrow F$ the canonical epimorphism. If $k$ is a subfield of $F$, then $R = \pi^{-1}(k)$ is a PVD with maximal ideal $M$  \cite[Proposition 2.6]{AD}. Moreover, every PVD arises in this way. Let $R$ be a PVD with maximal ideal $M$. Then $V = (M:M)$ is a valuation domain with maximal ideal $M$ \cite[Proposition 2.5]{AD}; so $R= \pi^{-1}(R/M)$. A similar result holds for P$n$VDs and $n$-VDs.

 \begin{thm}\label{S4T21}
 Let $V$ be an $n$-VD with nonzero maximal ideal
$M$,  residue field $F = V/M$, $\pi : V \longrightarrow F$ the canonical epimorphism, $k$  a subfield of $F$, and $R = \pi^{-1}(k)$. Then the pullback
$R = V \times_F k$ is a P$n$VD with maximal ideal $M$. In particular, if $k$ is properly contained
in $F$ and $V$ is not an $n$-root extension of $R$, then $R$ is a P$n$VD which is not an $n$-VD. 
\end{thm}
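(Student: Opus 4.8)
The plan is to verify everything directly from the pullback description $R=\pi^{-1}(k)$, using only the elementary characterization of P$n$VDs in Theorem~\ref{S4T18}. First I would record the basic pullback facts. Let $K$ be the quotient field of $R$. Since $M\neq 0$ is an ideal of $V$ with $M\subseteq R$ (as $\pi(M)=0\in k$), for any $v\in V$ and any fixed $0\neq m\in M$ we have $vm\in M\subseteq R$, so $v=vm/m\in K$; hence $V\subseteq K$ and $K$ is the common quotient field of $R$ and $V$. Moreover $R/M\cong k$ is a field, so $M$ is a maximal ideal of $R$, and in fact $R$ is quasilocal with maximal ideal $M$: given $x\in R\setminus M$, choose $u\in R$ with $\pi(xu)=1$, so $xu=1+m$ with $m\in M$; then $1+m\in V\setminus M=U(V)$ and $\pi((1+m)^{-1})=1\in k$, so $(1+m)^{-1}\in R$ and $x\in U(R)$.

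Next comes the core step: showing that $M$ is a pseudo $n$-strongly prime ideal of $R$. Let $x\in E_n(R)$, i.e. $x\in K$ with $x^n\notin R$. Since $V$ is an $n$-VD, either $x^n\in V$ or $x^{-n}\in V$. If $x^{-n}\in V$, then $x^{-n}M\subseteq M$ since $M$ is an ideal of $V$. If instead $x^n\in V$, then $x^n\notin R\supseteq M$ forces $x^n\notin M$, so $x^n\in V\setminus M=U(V)$ (as $V$ is quasilocal with maximal ideal $M$), and again $x^{-n}\in V$, so $x^{-n}M\subseteq M$. Thus $x^{-n}M\subseteq M$ for every $x\in E_n(R)$, so $M$ is pseudo $n$-strongly prime by Theorem~\ref{S4T18}(a); since $R$ is quasilocal with maximal ideal $M$, Theorem~\ref{S4T18}(b) then gives that $R=V\times_F k$ is a P$n$VD with maximal ideal $M$.

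For the "in particular" part I would argue contrapositively: assuming $R$ is an $n$-VD, I would show $V$ is an $n$-root extension of $R$, contradicting the hypothesis. Take $x\in V$. If $x\in M$, then $x^n\in M\subseteq R$. Otherwise $x\in V\setminus M=U(V)$, so $x^{-n}\in U(V)$ and hence $x^{-n}\notin M$; since $R$ is an $n$-VD, $x^n\in R$ or $x^{-n}\in R$, and in the latter case $x^{-n}\in R\setminus M=U(R)$, so $x^n=(x^{-n})^{-1}\in R$. Either way $x^n\in R$, so $V$ is an $n$-root extension of $R$ --- the required contradiction. (The hypothesis $k\subsetneq F$ merely ensures $R\neq V$, so that the assumption "$V$ is not an $n$-root extension of $R$" is not automatically violated.) I do not anticipate a real obstacle; the only subtlety is that $V$ is assumed to be an $n$-VD and not necessarily a valuation domain, so one must everywhere invoke the $n$-VD dichotomy "$x^n\in V$ or $x^{-n}\in V$" rather than valuation comparability --- and this is precisely why assuming $V$ is merely an $n$-VD suffices.
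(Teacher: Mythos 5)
Your proof is correct and follows essentially the same route as the paper: the core step is identical (for $x \in E_n(R)$ use the $n$-VD dichotomy in $V$, observe $x^n \notin M$ forces $x^n \in U(V)$ in the first case, conclude $x^{-n}M \subseteq M$, and invoke Theorem~\ref{S4T18}(a)(b)), with the pullback facts you verify directly being exactly what the paper cites as known. The only cosmetic difference is the ``in particular'' claim, which the paper dispatches by citing Theorem~\ref{S4T20}, while you inline the relevant half of that argument ($R$ an $n$-VD $\Rightarrow$ $V$ an $n$-root extension of $R$), which is equally valid.
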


\begin{proof} 
In view of the construction stated in the hypothesis, it is well known that $M$ is a maximal ideal of $R$
for any integral domain $V$. Also, it is clear that $R$ and $V$ have
the same quotient field $K$ by \cite[Lemma 3.1]{AD}. Let $x \in E_n(R)$. Then  $x^n \in V$ or $x^{-n} \in V$ since $V$ is an $n$-VD. Suppose that $x^n \in V$. Since $x \in E_n(R)$ and $M$ is the maximal ideal of $R$, we have $x^n \not \in M$. Thus $x^n \in U(V)$, and hence $x^{-n} \in V$; so $x^{-n}M \subseteq M$ since $M$ is an ideal of $V$. Now suppose that $x^{-n} \in V$. 
Then $x^{-n}M\subseteq M$ since $M$ is an ideal of $V$. Thus $M$ is a pseudo $n$-strongly prime ideal of $R$ by Theorem~\ref{S4T18}(a), and hence $R$ is a P$n$VD by Theorem~\ref{S4T18}(b). The remaining part is clear from Theorem~\ref{S4T20}.
\end{proof}

The final example illustrates the previous theorem.

\begin{exa}
{\rm (a)  Let $V = \mathbb{Z}_p(t)[[X]]$. Then $V$ is a valuation domain; so $R = \mathbb{Z}_p + X\mathbb{Z}_p(t)[[X]]$ is a P$n$VD for every positive integer $n$, but not an $n$-VD for any positive integer $n$, by Theorem~\ref{S4T21} since $V$ is not an $n$-root extension of $R$. Note that $R$ is actually a PVD.

(b) Let $T = K + M$ be a quasilocal integral domain with maximal ideal $M$ and $K$ a subfield of $T$. Let $k$ be a subfield of $K$ and $R = k + M$. Then $R$ is also quasilocal with maximal ideal $M$. Thus $R$ is an $n$-PVD (resp., P$n$VD) if and only if $T$ is an $n$-PVD (resp., P$n$VD) by Corollary~\ref{S4C1} (resp., Theorem~\ref{S4T18}(b)).

For example, $T = \mathbb{R}[[X^2,X^3]] = \mathbb{R} + X^2\mathbb{R}[[X]]$ is an $n$-PVD $\Leftrightarrow$ $n \geq 2$ (Example~\ref{ex33}(b)), and thus $R =\mathbb{Q} + X^2\mathbb{R}[[X]]$ is an $n$-PVD $\Leftrightarrow$ $n \geq 2$.}
\end{exa}


\begin{thebibliography}{999}

\bibitem{AA} D. D. Anderson and D. F. Anderson, Multiplicatively closed subsets of fields,  Houston J. Math. 13(1987), 1--11.

\bibitem{A?} D. D. Anderson and M. Winders, Idealization of a module,  J. Commut. Algebra 1(2009), 3--56.

\bibitem{AKL} D. D. Anderson, K. R. Knopp. and R. L. Lewin, Almost Bezout domains, II,  J. Algebra 167(1994), 547--556.

\bibitem{AZ1} D. D. Anderson and M. Zafrullah, Almost Bezout domains,  J. Algebra 142(1991), 285--309.

\bibitem{AZ2} D. D. Anderson and M. Zafrullah, Almost Bezout domains, III, Bull. Math. Soc. Sci. Math. Roumanie 51(2008), 3--9.

\bibitem{A} D. F. Anderson, Comparability of ideals and valuation overrings, Houston J. Math. 5(1979), 451--463.

\bibitem{A3} D. F. Anderson, Root closure in integral domains,  J. Algebra 79(1982), 51--59.

\bibitem{A2} D. F. Anderson, When the dual of an ideal is a ring, Houston J. Math. 9(1983), 325--332.

 \bibitem{AB1} D. F. Anderson and A. Badawi, On $n$-absorbing ideals of commutative rings, Comm. Algebra 39(2011), 1646--1672.

\bibitem{AB2} D. F. Anderson and A. Badawi, On (m,n)-closed ideals of commutative rings, J. Algebra Appl. 16(2017)(21 pages).

\bibitem{AD} D. F. Anderson and D. E. Dobbs, Pairs of rings with the same prime ideals, Canad. J. Math. 32(1980), 362--384.

\bibitem{AD2} D. F. Anderson and D. E. Dobbs, On the product of ideals, Canad. Math. Bull. 26(1983), 238--246.

\bibitem{B} A. Badawi, On abelian $\pi$-regular rings, Comm. Algebra 25(1997), 1009--1021.

\bibitem{B?} A. Badawi, On divided commutative rings, Comm. Algebra 27(1999), 1465--1474.

\bibitem{B1} A. Badawi, On 2-absorbing ideals of commutative rings, Bull. Austral. Math. Soc. 75 (2007), 417--429.

\bibitem{B2} A. Badawi, On pseudo-almost valuation domains, Comm. Algebra 35(2007), 1167--1181.

\bibitem{BH} A. Badawi and E. Houston, Powerful ideals, strongly primary ideals, pseudo-valuation domains, and conducive domains, Comm. Algebra 30(2002), 1591--1606.

\bibitem {CW} S. H. Choi and A. Walker, The radical of an $n$-absorbing ideal, arXiv:1610.10077 [math.AC].

\bibitem{CH} J. A. Cox and A. J. Hetzel, Uniformly primary ideals, J. Pure. Appl. Algebra 212(2008), 1--8.

\bibitem{Do} D. E. Dobbs, Divided rings and going-down, Pacific J. Math 67(1976), 353--363.

\bibitem{Do2} D. E. Dobbs, Coherence, ascent of going-down, and pseudo-valuation domains, Houston J. Math. 4(1978), 551--576.

\bibitem{D} G. Donadze, A proof of the Anderson–Badawi $rad(I)^n \subseteq I$ formula for n-absorbing ideals, Proc. Indian Acad. Sci. (Math. Sci.) 128(2018), 6--11.

\bibitem{G} R. Gilmer, {\it Multiplicative Ideal Theory}, Marcel Dekker, Inc., New York, 1972.

\bibitem{HH1} J. R. Hedstrom and E. G. Houston, Pseudo-valuation domains, Pacific J. Math. 75(1978), 137--147.

\bibitem{HH2} J. R. Hedstrom and E. G. Houston, Pseudo-valuation domains, II, Houston J. Math. 4(1978), 199--207.

\bibitem{H} J. Huckaba, {\it Rings with Zero Divisors}, Marcel Dekker, Inc., New York, 1988.

\bibitem{K} I. Kaplansky, {\it Commutative Rings, Revised Edition}, The University of Chicago Press, Chicago and London, 1974.

\bibitem{LM} M. D. Larson and P. J. McCarthy, {\it Multiplicative Theory of Ideals}, Academic Press, New York/London, 1971.

\bibitem{MMM} N. Mahdou, A. Mimouni, and M. Moutui, On almost valuation and almost Bezout rings, Comm. Algebra 43(2015), 297--308.
\end{thebibliography}
\end{document}